\title[Zonotopal bijections for regular matroids]{Geometric bijections for regular matroids, zonotopes, and Ehrhart theory}
\author{Spencer Backman, Matthew Baker, Chi Ho Yuen}
\date{\today}  
\newcommand{\QQ}{\mathbb{Q}}
\newcommand{\RR}{\mathbb{R}}
\newcommand{\ZZ}{\mathbb{Z}}
\numberwithin{equation}{section}
\theoremstyle{definition}
\newtheorem{theorem}{Theorem}[subsection]
\newtheorem{lemma}[theorem]{Lemma}
\newtheorem{corollary}[theorem]{Corollary}
\newtheorem{remark}[theorem]{Remark}
\newtheorem{proposition}[theorem]{Proposition}
\newtheorem{example}[theorem]{Example}
\newcommand{\Col}{\operatorname{Col}}
\newcommand{\Jac}{\operatorname{Jac}}
\newcommand{\Pic}{\operatorname{Pic}}
\newcommand{\sign}{\operatorname{sign}}
\newcommand{\coker}{\operatorname{coker}}
\DeclareRobustCommand{\rchi}{{\mathpalette\irchi\relax}}
\newcommand{\irchi}[2]{\raisebox{\depth}{$#1\chi$}}
\begin{document}

\begin{abstract}
Let $M$ be a {\em regular matroid}.  The {\em Jacobian group} ${\rm Jac}(M)$ of $M$ 
is a finite abelian group whose cardinality is equal to the number of {\em bases} of $M$. This group
generalizes the definition of the Jacobian group (also known as the critical group or sandpile group) $\Jac(G)$ of a graph $G$ (in which case bases of the corresponding regular matroid are spanning trees of $G$).  

There are many explicit combinatorial bijections in the literature between the Jacobian group of a graph ${\rm Jac}(G)$ and spanning trees.  However, most of the known bijections use {\em vertices} of $G$ in some essential way and are inherently ``non-matroidal''. In this paper, we construct a family of explicit and easy-to-describe bijections between the Jacobian group of a regular matroid $M$ and bases of $M$, many instances of which are new even in the case of graphs. We first describe our family of bijections in a purely combinatorial way in terms of orientations; more specifically, we prove that the Jacobian group of $M$ admits a canonical simply transitive action on the set ${\mathcal G}(M)$ of circuit-cocircuit reversal classes of $M$, and then define a family of combinatorial bijections $\beta_{\sigma,\sigma^*}$ between ${\mathcal G}(M)$ and bases of $M$.  (Here $\sigma$ (resp. $\sigma^*$) is an {\em acyclic signature} of the set of circuits (resp. cocircuits) of $M$.)
We then give a geometric interpretation of each such map $\beta=\beta_{\sigma,\sigma^*}$ in terms of zonotopal subdivisions which is used to verify that $\beta$ is indeed a bijection. 

Finally, we give a combinatorial interpretation of lattice points in the zonotope $Z$; by passing to dilations we obtain
a new derivation of Stanley's formula linking the Ehrhart polynomial of $Z$ to the Tutte polynomial of $M$. 
\end{abstract}

\maketitle

\section{Introduction}

\subsection{The main bijection in the case of graphs}

Let $G$ be a connected finite graph.
The {\em Jacobian group} ${\rm Jac}(G)$ of $G$ (also called the sandpile group, critical group, etc.) is a finite abelian group canonically associated to $G$ whose cardinality equals the number of spanning trees of $G$. Since in most cases there is no distinguished spanning tree to correspond to the identity element, there is no canonical bijection between $\Jac(G)$ and the set ${\mathcal T}(G)$ of spanning trees of $G$. However, many constructions of combinatorial bijections starting with some fixed additional data are known.  We mention, for example: the Cori--Le Borgne bijections that use an ordering of the edges as well as a fixed vertex \cite{cori2001burning}, Perkinson, Yang and Yu's bijections that use an ordering of the vertices \cite{perkinson2015dfs}, and Bernardi's bijections that use a cyclic ordering of the edges incident to each vertex \cite{bernardi2008tutte}.

In this paper we describe a new family of combinatorial bijections between $\Jac(G)$ and ${\mathcal T}(G)$.  Our bijections are very simple to state, though proving that they are indeed bijections is not so simple.  Another feature is that our bijections are formulated in a ``purely matroidal'' way, and in particular they generalize from graphs to {\em regular matroids}.  We will first state the main result of this paper in the language of graphs, and then give the generalization to regular matroids.

What we will in fact do is establish a family of bijections between ${\mathcal T}(G)$ and the set ${\mathcal G}(G)$ of {\em cycle-cocycle equivalence classes} of orientations of $G$. The latter was introduced by Gioan \cite{gioan2007enumerating,gioan2008circuit} and by definition is the set of equivalence classes of orientations of $G$ with respect to the equivalence relation generated by directed cycle reversals and directed cut reversals. We will write $[{\mathcal O}]$ to denote the equivalence class containing an orientation ${\mathcal O}$. ${\mathcal G}(G)$ is known to be a torsor for $\Jac(G)$ in a canonical way (i.e., there is a canonical simply transitive group action of $\Jac(G)$ on ${\mathcal G}(G)$) \cite{backman2014riemann}. By fixing a class in ${\mathcal G}(G)$ to correspond to the identity element of $\Jac(G)$, we then obtain a bijection between $\Jac(G)$ and ${\mathcal T}(G)$.

To state our main bijection for graphs, let ${\mathcal C}(G)$ (resp. ${\mathcal C}^*(G)$) denote the set of simple cycles (resp. minimal cuts, i.e., bonds) of $G$, and define a {\bf cycle signature} (resp. {\bf cut signature}) on $G$ to be a choice, for each $C \in {\mathcal C}(G)$ (resp. $C \in {\mathcal C}^*(G)$), of an orientation of $C$. By fixing an reference orientation for each edge, we can identify directed cycles (resp. directed cuts) with elements of $\mathbb{Z}^{E(G)}$. Now we call a cycle signature $\sigma$ (resp. cut signature $\sigma^*$) {\bf acyclic} if whenever $a_C$ are nonnegative reals with
\[
\sum_{C \in {\mathcal C}(G)} a_C \sigma(C) = 0
\]
in $\mathbb{Z}^{E(G)}$ 
(resp. $\sum_{C \in {\mathcal C}^*(G)} a_C \sigma^*(C) = 0$) we have $a_C = 0$ for all $C$.

\begin{example}\label{edgeorder}
Fix a total order $e_1 < \cdots < e_m$ and a reference orientation ${\mathcal O}$ of $E(G)$, and orient each simple cycle $C$ compatibly with the reference orientation of the smallest element in $C$.  This gives an acyclic signature of ${\mathcal C}(G)$. Indeed, suppose the signature is not acyclic and take some nontrivial expression $\sum_{C \in {\mathcal C}(G)} a_C \sigma(C) = 0$. Let $e$ be the minimum element appearing in some cycle in the support of this expression.  Then the element $e$ must be appear with different orientations in at least two different cycles, and thus one of these cycles is not oriented according to $\sigma$, a contradiction. One can, in an analogous way, use ${\mathcal O}$ to define an acyclic signature of ${\mathcal C}^*(G)$.
\end{example}

Recall that if $T$ is a spanning tree of $G$ and $e \not\in T$ (resp. $e \in T$), there is a unique cycle  $C(T,e)$ (resp. cut $C^*(T,e)$) contained in $T \cup \{ e \}$ (resp. containing $T \backslash \{ e \}$), called the {\em fundamental cycle} (resp. {\em fundamental cut}) associated to $T$ and $e$. With this notation in place, we can now state our main bijection in the case of graphs:

\begin{theorem} \label{thm:mainbijectionforgraphs}
Let $G$ be a connected finite graph, and fix acyclic signatures $\sigma$ and $\sigma^*$ of ${\mathcal C}(G)$ and ${\mathcal C}^*(G)$, respectively. Given a spanning tree $T \in {\mathcal T}(G)$, let ${\mathcal O}(T)$ be the orientation of $G$ in which we orient each $e \not\in T$ according to its orientation in $\sigma(C(T,e))$ and each $e \in T$ according to its orientation in $\sigma^*(C^*(T,e))$. Then the map $T \mapsto [{\mathcal O}(T)]$ is a bijection between ${\mathcal T}(G)$ and ${\mathcal G}(G)$.
\end{theorem}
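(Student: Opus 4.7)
The plan is to start from two observations. First, $\mathcal{O}(T)$ is well-defined as an orientation because every edge lies in exactly one of $T$ or $E(G) \setminus T$, so exactly one of the two rules applies. Second, by a theorem of Gioan, $|\mathcal{T}(G)| = |\mathcal{G}(G)|$. Hence it suffices to prove either injectivity or surjectivity of the map $T \mapsto [\mathcal{O}(T)]$.

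To handle injectivity, I would try to single out the orientations of the form $\mathcal{O}(T)$ intrinsically. Call an orientation $\mathcal{O}$ \emph{$\sigma$-compatible} if no directed cycle of $\mathcal{O}$ can be written in the form $-\sum_C a_C \sigma(C)$ with $a_C \ge 0$ not all zero, and analogously for $\sigma^*$-compatibility. I would aim to prove three assertions: (a) every $\mathcal{O}(T)$ is both $\sigma$- and $\sigma^*$-compatible, which should follow from the definition of $\mathcal{O}(T)$ together with the acyclicity hypothesis; (b) any $(\sigma,\sigma^*)$-compatible orientation arises as $\mathcal{O}(T)$ for a unique spanning tree $T$, recovered by identifying which edges play the role of fundamental cycle edges versus fundamental cut edges relative to $\sigma$ and $\sigma^*$; and (c) each cycle-cocycle class in $\mathcal{G}(G)$ contains at most one $(\sigma,\sigma^*)$-compatible representative. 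Items (a) and (b) feel like bookkeeping once the compatibility is pinned down; item (c) is where the real content lies.

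I expect (c) to be the main obstacle. Given two $(\sigma,\sigma^*)$-compatible orientations $\mathcal{O}_1 \sim \mathcal{O}_2$, one writes $\mathcal{O}_1 - \mathcal{O}_2 \in 2(\Lambda(G) + \Lambda^*(G))$ and tries to derive a contradiction from acyclicity. A direct combinatorial argument seems delicate, because the acyclic cones inside $\Lambda(G)$ and $\Lambda^*(G)$ interact nontrivially: a naive minimal-length reversal path between $\mathcal{O}_1$ and $\mathcal{O}_2$ need not respect either cone separately. The abstract signals that this step is handled via a zonotopal subdivision, which is the fallback I would pursue. I would construct the zonotope $Z$ generated by the edge vectors of $G$ under a totally unimodular representation, and use $\sigma$ and $\sigma^*$ to specify a half-open tiling of $Z$ by parallelepipeds $P_T$ indexed by spanning trees $T$, each containing exactly one lattice point. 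Since lattice points of $Z$ modulo the relevant sublattice biject with $\mathcal{G}(G)$, the assignment $T \mapsto [\mathcal{O}(T)]$ should realize the tile-to-lattice-point correspondence, establishing the bijection geometrically. The last task is then to verify that the combinatorial rule ``orient $e \notin T$ by $\sigma(C(T,e))$ and $e \in T$ by $\sigma^*(C^*(T,e))$'' exactly encodes which lattice point lies inside $P_T$ in the half-open tiling, a local check at each edge.
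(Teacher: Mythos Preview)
Your factorization through the set $\mathcal{X}$ of $(\sigma,\sigma^*)$-compatible orientations is exactly the paper's route ($\beta = \chi \circ \hat\beta$), but you have the difficulties reversed. Step (c) is \emph{not} the obstacle: the paper handles it combinatorially in a few lines---if two $\sigma$-compatible orientations lie in the same circuit-reversal class, decompose their difference conformally into signed circuits and observe that the last one reversed is oriented against $\sigma$; combined with the standard circuit/cocircuit partition of any orientation, this gives uniqueness in each circuit-cocircuit class. The real obstacle is (a), and with it the injectivity in (b). The paper states explicitly that it knows no direct combinatorial proof that $\mathcal{O}(T)$ is $\sigma$-compatible, and there is no evident rule for recovering $T$ from $\mathcal{O}(T)$; calling these ``bookkeeping'' is where your plan would stall. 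Both facts are established only via the zonotope, so your ``fallback'' is where the work must actually occur---and it is needed for (a) and (b), not for (c).

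Your zonotopal sketch also needs correction. Lattice points of $Z$ biject with \emph{circuit}-reversal classes (equivalently, with $\sigma$-compatible discrete orientations; there are $T_M(2,1)$ of them), not with $\mathcal{G}(G)$. The tiling $\{Z(T)\}$ is built from $\sigma$ alone; $\sigma^*$ enters separately through a shift vector $w'$, the projection onto cocircuit space of any $w$ satisfying $w\cdot\sigma^*(C)>0$ for every cocircuit $C$. For small $\epsilon$ the translate $Z(T)+\epsilon w'$ contains a unique lattice point, and the key computation is that this point is exactly $\psi(\mathcal{O}(T))$, which simultaneously proves (a) and gives the bijection $\hat\beta:\mathcal{T}(G)\to\mathcal{X}$. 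The map $\chi:\mathcal{X}\to\mathcal{G}(G)$ is then the short combinatorial step. Finally, note that the paper does not invoke Gioan's equality $|\mathcal{T}(G)|=|\mathcal{G}(G)|$: both factors $\hat\beta$ and $\chi$ are shown to be bijections outright, so the cardinality equality emerges as a corollary rather than serving as an input.
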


The bijection in Theorem~\ref{thm:mainbijectionforgraphs} appears to be new even in the special case where $\sigma$ and $\sigma^*$ are defined as in Example~\ref{edgeorder}.

\begin{example}\label{planar}
Suppose that $G$ is a plane graph and define $\sigma$ by orienting each simple cycle of $G$ counterclockwise.  Similarly, define $\sigma^*$ by orienting each simple cycle of the dual graph $G^*$ clockwise and composing with the natural bijection between oriented cuts of $G$ and oriented cycles of $G^*$. By \cite[Theorem 15]{yuen2015geometric}, the simply transitive action of $\Jac(G)$ on ${\mathcal T}(G)$ afforded by Theorem~\ref{thm:mainbijectionforgraphs} in this case coincides with the ``Bernardi torsor'' defined in \cite{bakeryao2016torsor} and {\em a posteriori} with the ``rotor-routing torsor'' defined in \cite{chan2015rotor,chan2015duality}.  In particular, we get a new ``geometric'' proof of the bijectivity of the Bernardi map.

This example is in fact a special case of Example~\ref{edgeorder}.  Indeed, let $q^*$ be the vertex of $G^*$ corresponding to the unbounded face of $G$, and fix a spanning tree $T^*$ of $G^*$. Let $\mathcal{O}^*$ be any orientation of $G^*$ in which the edges of $T^*$ are oriented away from $q^*$, and fix any total order on $E(G^*)$ in which every edge of the rooted tree $T^*$ has a larger label than its ancestors while being smaller than all the edges outside $T^*$. Using the natural bijection\footnote{If $e$ and $e^*$ are dual edges of $G$ and $G^*$, respectively, then given an orientation for $e^*$ we orient $e$ by rotating the orientation of $e^*$ clockwise locally near the crossing of $e$ and $e^*$.} between oriented edges of $G$ and of $G^*$, this gives an orientation $\mathcal{O}$ of $G$ and a total order $<$ on $E(G)$.  Then the cycle signature $\sigma$ associated to $(\mathcal{O},<)$ by the rule in Example~\ref{edgeorder} will orient every simple cycle of $G$ counterclockwise.
\end{example}

\subsection{Generalization to regular matroids} \label{sec:introregular}

As mentioned previously, an interesting feature of the bijection given by Theorem~\ref{thm:mainbijectionforgraphs} is that it admits a direct generalization to {\em regular matroids}.

Regular matroids are a particularly well-behaved and widely studied class of matroids which contain graphic (and co-graphic) matroids as a special case. More precisely, a regular matroid can be thought of as an equivalence class of totally unimodular integer matrices. 
See \S\ref{sec:regularmatroids} for further details.

If $G$ is a graph, one can associate a regular matroid $M(G)$ to $G$ by taking the (modified) adjacency matrix of $G$.
By a theorem of Whitney, the equivalence class of $A$ determines the graph $G$ up to ``2-isomorphism'' (and in particular determines $G$ up to isomorphism if $G$ is assumed to be $3$-connected).

Let $M$ be a regular matroid. One can define the set ${\mathcal C}(M)$ of {\em signed circuits} of $M$ (resp. the set ${\mathcal C}^*(M)$ of {\em signed cocircuits} of $M$) in a way which generalizes the corresponding objects when $M=M(G)$.  Similarly, one has a set $B(M)$ of {\em bases} of $M$, generalizing the notion of spanning tree for graphs, and a set ${\mathcal G}(M)$ of cycle-cocycle equivalence classes generalizing the corresponding set for graphs.

In Section 4.3 of his Ph.D. thesis, Criel Merino defined the {\em critical group} (which we will call the {\em Jacobian}) ${\rm Jac}(M)$ of $M$, generalizing the critical group of a graph. 
By results of Merino and Gioan, the cardinalities of $\Jac(M)$, $B(M)$, and ${\mathcal G}(M)$ all coincide\footnote{The fact that these cardinalities are equal is essentially a translation of the natural extension of Kirchhoff's Matrix-Tree theorem to regular matroids \cite{maurer1976matrix},\cite[Theorem 4.3.2]{merino1999matroids}. A ``volume proof'' of the Matrix-Tree theorem for regular matroids based on zonotopal subdivisions is given in \cite{dall2014polyhedral}.  These authors do not consider the problem of giving explicit combinatorial bijections between bases of $M$ and the Jacobian group.}.

Generalizing the known case of graphs \cite{backman2014riemann}, we prove:

\begin{theorem} \label{thm:torsortheorem}
${\mathcal G}(M)$ is canonically a torsor for $\Jac(M)$.
\end{theorem}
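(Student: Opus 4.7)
The plan is to realize the claimed torsor via an explicit basepointed bijection
\[
\Phi\colon \mathcal G(M)\longrightarrow \Jac(M)
\]
and then transport the regular action of $\Jac(M)$ on itself through $\Phi$. Fix a totally unimodular representation of $M$ and let $\Lambda,\Lambda^*\subseteq\ZZ^E$ be the integral circuit and cocircuit lattices, so that $\Jac(M)=\ZZ^E/(\Lambda+\Lambda^*)$. Attach to each orientation $\mathcal O$ its signed indicator $x_{\mathcal O}\in\{\pm 1\}^E$ with respect to an auxiliary reference orientation (the final action will turn out not to depend on this choice). Fixing a basepoint class $[\mathcal O_0]$, define $\Phi([\mathcal O])=\left[\tfrac{x_{\mathcal O}-x_{\mathcal O_0}}{2}\right]$.

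Reversing a directed signed circuit (resp.\ cocircuit) changes $x_{\mathcal O}$ by $\pm 2c$ with $c\in\Lambda$ (resp.\ $c\in\Lambda^*$), so $\Phi$ is well-defined on circuit-cocircuit reversal classes. Since $|\mathcal G(M)|=|B(M)|=|\Jac(M)|$ by the results of Gioan and Merino recalled in the introduction, it suffices to show $\Phi$ is injective: once that is established, $\Phi$ is bijective, the action $[\alpha]\cdot[\mathcal O]:=\Phi^{-1}(\Phi([\mathcal O])+[\alpha])$ is simply transitive, and basepoint-independence follows because this action is equivalent to the intrinsic recipe ``$[\mathcal O']=[\alpha]\cdot[\mathcal O]$ iff $\tfrac{x_{\mathcal O'}-x_{\mathcal O}}{2}\equiv\alpha\pmod{\Lambda+\Lambda^*}$,'' which refers neither to $\mathcal O_0$ nor to the auxiliary reference orientation.

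The key step is injectivity. Suppose $\Phi([\mathcal O])=\Phi([\mathcal O'])$, so $h:=\tfrac12(x_{\mathcal O}-x_{\mathcal O'})\in\Lambda+\Lambda^*$. Write $h=f+g$ with $f\in\Lambda$, $g\in\Lambda^*$. The plan is to produce a sequence of circuit and cocircuit reversals carrying $\mathcal O$ to $\mathcal O'$ while strictly decreasing the nonnegative integer $\|f\|_1+\|g\|_1$ at each step. Using a conformal decomposition of $f$ in the oriented matroid of $M$, extract a signed circuit $c$ with $\mathrm{supp}(c)\subseteq\mathrm{supp}(f)$ and signs matching $f$ edgewise. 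If the directed realization of $c$ sits inside the current orientation, reverse it: this changes $(\mathcal O,f)$ to $(\mathcal O-2c,f-c)$ and strictly reduces $\|f\|_1$. Otherwise, apply Minty's painting lemma at a ``blocking'' edge to produce a signed cocircuit $c^*$ with $\mathrm{supp}(c^*)\subseteq\mathrm{supp}(g)$ whose directed realization is present in $\mathcal O$; reverse $c^*$ instead, strictly reducing $\|g\|_1$. Termination forces $\mathcal O=\mathcal O'$, whence $[\mathcal O]=[\mathcal O']$ in $\mathcal G(M)$.

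The main obstacle is this algorithmic dichotomy: one must show that at every stage \emph{at least one} of a suitable $c\preceq f$ or $c^*\preceq g$ is realized in the current orientation, so a legal reversal is always available. This is where the orthogonality $\Lambda\perp\Lambda^*$ and the oriented-matroid painting axiom must be combined with the conformal decomposition. In \cite{backman2014riemann} the analogous step for graphs is mediated by indegree-at-a-sink arguments that rely on the vertex structure of $G$, and the task here is precisely to reformulate that reduction in purely matroidal language.
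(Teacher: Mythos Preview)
Your proposal is incomplete, and you acknowledge the gap yourself: the ``algorithmic dichotomy'' you need at the end is not proven, and it does not follow from Minty/painting as stated. Concretely, when your chosen circuit $c\preceq f$ is blocked in $\mathcal O$ at some edge $e$, the painting lemma will hand you \emph{some} signed cocircuit through $e$ compatible with $\mathcal O$, but there is no reason its support should lie inside $\mathrm{supp}(g)$ or that its signs should be conformal to $g$. So the step ``reverse $c^*$, strictly reducing $\|g\|_1$'' is unjustified: you might be forced to \emph{increase} $\|g\|_1$, and then your potential function is not monotone. The observation that $h(e)\in\{0,\mathcal O(e)\}$ (since $h=\tfrac12(x_{\mathcal O}-x_{\mathcal O'})$) does show that at a blocking edge one has $e\in\mathrm{supp}(g)$ with $\mathrm{sign}\,g(e)=\mathcal O(e)$, so a cocircuit conformal to $g$ through $e$ exists; but nothing forces \emph{that} cocircuit to be realized in $\mathcal O$ away from $e$. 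Your final sentence essentially concedes that the matroidal replacement for the ``sink/indegree'' argument of \cite{backman2014riemann} is still to be found.

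There is also a structural inefficiency: you invoke $|\mathcal G(M)|=|\Jac(M)|$ to deduce surjectivity from injectivity, but surjectivity of $\Phi$ is the \emph{easy} direction. Every class in $\Jac(M)$ has a representative $\alpha\in\{-1,0,1\}^E$ (this is the content of the paper's Proposition~\ref{Prop:JacRep}, a short inductive argument), and then flipping the edges of $\mathcal O_0$ indicated by $\alpha$ produces an orientation hitting $[\alpha]$. If you are willing to cite Gioan's and Merino's counts anyway, you should prove surjectivity and deduce injectivity, not the reverse.

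For comparison, the paper avoids your obstacle entirely. It defines the action elementwise by ``reverse a single edge after first reorienting within the class so that edge points the desired way,'' and the technical work goes into checking (i) independence of the chosen representative and (ii) commutativity of the generators (Propositions~\ref{Prop:IndepRep}--\ref{Prop:commute_action}), which are handled by short circuit/cocircuit manipulations using Lemmas~\ref{Lem:ExtendCC} and~\ref{Lem:AlmostPosCC}. Transitivity is immediate, and simplicity is reduced to the $\{-1,0,1\}$-representative statement (Proposition~\ref{Prop:JacRep}). No step requires producing, inside a given orientation, a positive circuit or cocircuit with prescribed support, which is exactly the step you cannot complete.
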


In view of this result, in order to construct a bijection between elements of $\Jac(M)$ and bases of $M$, it suffices to give a bijection between $B(M)$ and ${\mathcal G}(M)$: 
if we fix an arbitrary element of ${\mathcal G}(M)$ (e.g. by fixing a reference orientation of $M$), our torsor induces a bijection between ${\rm Jac}(M)$ and ${\mathcal G}(M)$.
One can generalize the notion of acyclic signature and fundamental cycles (resp. cuts) in a straightforward way from graphs to regular matroids.
Theorem~\ref{thm:mainbijectionforgraphs} then admits the following generalization to regular matroids:

\begin{theorem} \label{thm:mainbijectionforregularmatroids}
Let $M$ be a regular matroid, and fix acyclic signatures $\sigma$ and $\sigma^*$ of ${\mathcal C}(M)$ and ${\mathcal C}^*(M)$, respectively. Given a basis $B \in B(M)$, let ${\mathcal O}(B)$ be the orientation of $M$ in which we orient each $e \not\in B$ according to its orientation in $\sigma(C(B,e))$ and each $e \in B$ according to its orientation in $\sigma^*(C^*(B,e))$. Then the map $B \mapsto [{\mathcal O}(B)]$ gives a bijection $\beta : B(M) \to {\mathcal G}(M)$. 
\end{theorem}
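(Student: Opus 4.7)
The plan is to combine a counting argument with a geometric zonotopal model. Theorem~\ref{thm:torsortheorem} together with Merino's matroidal Matrix-Tree theorem give $|\mathcal{G}(M)| = |\Jac(M)| = |B(M)|$, so it suffices to prove that $\beta$ is injective. First I would dispose of well-definedness: for each basis $B$, every $e \notin B$ lies in a unique fundamental circuit $C(B,e) \subseteq B \cup \{e\}$ and every $e \in B$ lies in a unique fundamental cocircuit $C^*(B,e)$, so the recipe unambiguously determines an orientation $\mathcal{O}(B)$ of every element of $M$.

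For the injectivity step I would realize both $B(M)$ and $\mathcal{G}(M)$ inside a common geometric object. Fix a totally unimodular matrix $A$ representing $M$ and let $Z = Z(A) \subset \RR^r$ be the zonotope spanned by its columns $\{v_e : e \in E(M)\}$. The classical Shephard decomposition tiles $Z$ by parallelepipeds $\Pi_B$ indexed by bases, and the key new ingredient is that the acyclic signatures $(\sigma,\sigma^*)$ canonically promote this to a \emph{half-open} tiling: from each $\Pi_B$ one removes exactly those facets parallel to $v_e$ (for $e \notin B$) dictated by $\sigma$, and those parallel to $v_e$ (for $e \in B$) dictated by $\sigma^*$. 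One then attaches to each orientation $\mathcal{O}$ a lattice point of $\RR^r$ via the signed sum of columns (say, the indicator of edges oriented with a reference orientation), observes that this assignment is invariant under directed circuit reversals (signed circuits lie in $\ker A$) and shifts by elements of the cocircuit lattice $\Lambda^*(M)$ under directed cocircuit reversals, hence descends to a map $\mathcal{G}(M) \to \RR^r / \Lambda^*(M)$. A direct unwinding of the construction of $\mathcal{O}(B)$ shows that $[\mathcal{O}(B)]$ lands in the interior of the half-open tile assigned to $B$; distinct bases correspond to distinct half-open tiles in a fundamental domain, and injectivity follows.

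The main obstacle, and the technical heart of the proof, is establishing that the half-open parallelepiped decomposition dictated by $(\sigma,\sigma^*)$ is truly a partition of a fundamental domain for $\Lambda^*(M)$ — with neither overlap nor gap. This is precisely where acyclicity of the signatures does its essential work, and I would aim to derive it from a signed circuit/cocircuit compatibility condition in the spirit of McMullen's criterion for half-open zonotopal tilings, checking compatibility at every pair of adjacent basis parallelepipeds $\Pi_B, \Pi_{B'}$ meeting along a shared facet arising from a basis exchange. Once this geometric partition is in hand, the remaining verifications are routine bookkeeping with signed circuits and cocircuits.
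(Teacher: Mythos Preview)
Your geometric intuition is sound and in the same spirit as the paper, but several technical details are garbled and the overall architecture differs from the paper's in ways worth noting.

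\textbf{Confusions in the half-open tile description.} The parallelepiped $\Pi_B$ has facets indexed by elements $e\in B$ (each such $e$ gives a pair of opposite facets obtained by freezing the $e$-coordinate); there are no ``facets parallel to $v_e$ for $e\notin B$''. What $\sigma$ actually does in the paper is determine the \emph{position} of $\Pi_B=Z(B)$ inside $Z_A$: the translate is fixed by orienting each $e\notin B$ according to $\sigma(C(B,e))$. The signature $\sigma^*$ then enters only to pick out a distinguished vertex of $Z(B)$, via a small shift in a direction $w'$ induced by $\sigma^*$. So $\sigma$ governs the tiling and $\sigma^*$ governs the half-open structure (equivalently, the shift); your proposal conflates these roles. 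Relatedly, there is no single ``classical Shephard decomposition'': there are many such tilings, and the paper shows that each acyclic $\sigma$ singles one out (Theorem~\ref{thm:zonotopedecomp}).

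\textbf{A different global framework.} Your passage to the quotient $\RR^r/L$, with $L$ the image of the cocircuit lattice under $A$, is a genuinely different route from the paper's. The paper never quotients: it shows that lattice points of $Z_A$ correspond bijectively to \emph{circuit}-reversal classes (Proposition~\ref{prop:latticepointprop}), then shifts each tile $Z(B)$ by $\epsilon w'$ to isolate a unique vertex (Lemma~\ref{lem:phiwelldefined}), identifies that vertex with $\hat\beta(B)$ by a direct coordinate computation (Theorem~\ref{thm:phibetaequal}), and handles cocircuit reversals separately via the existence and uniqueness of $(\sigma,\sigma^*)$-compatible representatives (Theorem~\ref{thm:chibijective}). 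Your torus picture could be made to work---indeed $L=\Col_\ZZ(AA^T)$ has covolume $|\Jac(M)|=\mathrm{vol}(Z_A)$, so the numerology is right---but proving that the half-open $Z_A$ is a fundamental domain for $L$ is itself a nontrivial statement that you have not reduced to something standard. The proposed local check ``in the spirit of McMullen's criterion'' at adjacent basis-exchange facets is not obviously sufficient and is not what the paper does.

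\textbf{Counting versus bijection.} Your reduction to injectivity via $|\mathcal G(M)|=|\Jac(M)|=|B(M)|$ is legitimate given Theorem~\ref{thm:torsortheorem} and Merino's result, but note that the paper deliberately avoids this: it proves both injectivity and surjectivity of $\hat\beta$ directly (Theorem~\ref{thm:betabijective}), thereby giving an independent bijective proof of those cardinality equalities.

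In summary: the zonotopal picture is the right one, but you would need to (i) correctly separate the roles of $\sigma$ (choosing the tiling) and $\sigma^*$ (choosing the half-open structure/shift), and (ii) either prove the fundamental-domain claim for $L$ or, as the paper does, bypass the quotient entirely by working with $\sigma$-compatible orientations and a shift argument inside $Z_A$.
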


\medskip

Most known combinatorial bijections between elements of ${\rm Jac}(G)$ and spanning trees of a graph $G$ do not readily extend to the case of regular matroids, as they use vertices of the graph in an essential way.
The only other work we are aware of giving explicit bijections between elements of $\Jac(M)$ and bases of a regular matroid $M$ are the papers of Gioan and Gioan--Las Vergnas \cite{gioan2002correspond,gioan2005activity}\footnote{Technically speaking, Gioan and Las Vergnas do not produce a bijection between bases and elements of $\Jac(M)$; they produce a bijection between $B(M)$ and $\mathcal{X}(M;\sigma,\sigma^*)$, where $\sigma$ and $\sigma^*$ are determined by a total order on the edges and a reference orientation as in Example~\ref{edgeorder}; see \S\ref{sec:brief_overview} for the definition of $\mathcal{X}(M;\sigma,\sigma^*)$.} and the as-yet unpublished recent work of Shokrieh \cite{farbod2016draft}. Our family of combinatorial bijections appears to be quite different from those of Gioan--Las Vergnas. 

\subsection{Brief overview of the proof of the main combinatorial bijections} \label{sec:brief_overview}

Although the statement of Theorem~\ref{thm:mainbijectionforgraphs} and its generalization Theorem~\ref{thm:mainbijectionforregularmatroids} to regular matroids $M$ are completely combinatorial, we do not know any simple combinatorial proof.  Our proof involves the geometry of a zonotopal subdivision associated to a matrix $A$ representing $M$.

Concretely, fix a totally unimodular $r \times m$ matrix $A$ representing $M$, where $r$ is the rank of $A$, and the columns of $A$ are indexed by a set $E$ of cardinality $m$. Denote by $V^* \subseteq \RR^E$ the row space of $A$ and by $\pi_{V^*}$ the orthogonal projection from $\RR^E$ to $V^*$. Let $u_e \in \RR^E$ be the standard coordinate vector corresponding to $e \in E$.  The {\em column zonotope} $Z_A \subset \RR^r$ (resp. {\em row zonotope} $\widetilde{Z_A} \subset \RR^E$) associated to $A$ is defined to be the Minkowski sum of the columns of $A$ (resp. the Minkowski sum of the orthogonal projections $\pi_{V^*}(u_e)$ for $e \in E$).
The linear transformation $L : v \mapsto Av$ gives an isomorphism from $V^*$ to $\RR^r$ taking $\widetilde{Z_A}$ to $Z_A$ and preserving lattice points (cf. Lemma~\ref{lem:RC_zonotope_isomorphic}). 
The reason that we introduce two versions of essentially the same object is mostly for the sake of notational convenience.

An {\em orientation} $\mathcal{O}$ of $M$ is a function $E \to \{ -1, 1 \}$.
An orientation $\mathcal{O}$ is {\em compatible} with a signed circuit $C$ of $M$ if $\mathcal{O}(e) = C(e)$ for all $e$ in the support of $C$.

If $\mathcal{O}$ is an orientation and $C$ is a signed circuit compatible with $\mathcal{O}$, we can perform a {\em circuit reversal} taking $\mathcal{O}$ to the orientation $\mathcal{O}'$ 
defined by $\mathcal{O}'(e) = -\mathcal{O}(e)$ if $e$ is in the support of $C$ and $\mathcal{O}'(e) = \mathcal{O}(e)$ otherwise.

Let $\sigma$ be an acyclic signature of ${\mathcal C}(M)$.
We say that $\mathcal{O}$ is {\em $\sigma$-compatible} if every signed circuit $C$ of $M$ compatible with $\mathcal{O}$ is oriented according to $\sigma$.
By Proposition~\ref{prop:discretecompatible}, every circuit-reversal equivalence class of orientations contains a unique $\sigma$-compatible orientation.

The connection between $\sigma$-compatible orientations and the zonotopes defined above is given by the following result.
For the statement, given an orientation $\mathcal{O}$ of $M$ and $e \in E$, define $w_e \in \RR^r$ to be $0$ if $\mathcal{O}(e) = -1$ and 
to be the $e^{\rm th}$ column of $A$ if $\mathcal{O}(e) = 1$.
Define $\psi(\mathcal{O}) \in Z_A$ by

\begin{equation}
\psi(\mathcal{O}) := \sum_{e \in E} w_e \in Z_A.
\end{equation}
Then the map $\psi$ induces a bijection between circuit-reversal classes of orientations of $M$ and lattice points of the zonotope $Z_A$ (cf. Proposition~\ref{prop:latticepointprop}).

Fix a reference orientation $\mathcal{O}_0$ of $M$.
Each acyclic signature $\sigma$ of ${\mathcal C}(M)$ gives rise to a subdivision of $Z_A$ into smaller zonotopes $Z(B)$, one for each basis $B$ of $M$,
in the following way.

Let $B$ be a basis of $M$.  For each $e \not\in B$, define $v_e \in V^*$ to be $0$ if the reference orientation of $e$ coincides with the orientation of $e$
in $\sigma(C(B,e))$, and to be the $e^{\rm th}$ column of $A$ otherwise.
Define 
\[
Z(B) := \sum_{e \in B} [0,A_e] + \sum_{e \not\in B} v_e \subseteq Z_A \subset \RR^r.
\]

By Proposition~\ref{prop:zonotopedecomp}, the collection of $Z(B)$'s gives a {\em zonotopal subdivision} (also known in the literature as a {\em tiling}) $\Sigma$ of $Z_A$. Similarly, via the map $L$, the various $\widetilde{Z(B)}:=L^{-1}(Z(B))$'s give a zonotopal subdivision $\widetilde{\Sigma}$ of $\widetilde{Z_A}$.

\medskip

We now explain briefly how these results are used to prove 
Theorem~\ref{thm:mainbijectionforregularmatroids}.  

Let $\sigma,\sigma^*$ be acyclic signatures of $C(M)$ and $C^*(M)$, respectively. An orientation is called {\em $(\sigma,\sigma^*)$-compatible} if it is both $\sigma$-compatible and $\sigma^*$-compatible, and we denote the set of such orientations by $\mathcal{X}(M;\sigma,\sigma^*)$.

\begin{theorem} \label{thm:SScompatibleorientation}
Let $\hat{\beta}$ be the map which sends a basis $B$ to the orientation $\mathcal{O}(B)$ defined in Theorem~\ref{thm:mainbijectionforregularmatroids}. Let $\chi$ be the map which sends an orientation $\mathcal{O}$ to its circuit-cocircuit reversal class $[\mathcal{O}]$, so that  
 $\beta=\chi\circ\hat{\beta}$.  
\begin{enumerate}
\item The image of $\hat{\beta}$ is contained in $\mathcal{X}(M;\sigma,\sigma^*)$, and $\hat{\beta}$ gives a bijection between $B(M)$ and $\mathcal{X}(M;\sigma,\sigma^*)$.
\item The map $\chi$ restricted to $\mathcal{X}(M;\sigma,\sigma^*)$ induces a bijection between $\mathcal{X}(M;\sigma,\sigma^*)$ and $\mathcal{G}(M)$. 
\end{enumerate}
\end{theorem}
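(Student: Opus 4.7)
\emph{Plan for Part (1).} My plan is to use the zonotopal machinery summarized in the excerpt. The stated theorems combine to give bijections between circuit-reversal classes of orientations of $M$, $\sigma$-compatible orientations (via the unique-representative theorem), and lattice points of $Z_A$ (via $\psi$). Since $\Sigma = \{Z(B) : B \in B(M)\}$ is a unimodular zonotopal tiling of $Z_A$, a consistent choice of half-open cells partitions the lattice points of $Z_A$ into singletons indexed by bases. To identify which $\sigma$-compatible orientation is attached to a given $B$, I would write the unique lattice point in the half-open version of $Z(B)$ as $\sum_{e\in B} t_e A_e + \sum_{e\notin B} v_e$ with $t_e \in \{0,1\}$, apply $\psi^{-1}$, and match the resulting orientation against $\mathcal{O}_B$ by unpacking the definition of $v_e$ (which encodes $\sigma(C(B,e))$). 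For $\sigma^*$-compatibility I would invoke matroid duality: applied to $M^*$ with swapped signatures and dual basis $E\setminus B$, the construction yields the same orientation $\mathcal{O}_B$, so the $M^*$-version of $\sigma$-compatibility gives $\sigma^*$-compatibility in $M$. Injectivity of $\hat{\beta}$ falls out of the same identification, and surjectivity then follows by counting, using $|\mathcal{X}(M;\sigma,\sigma^*)| \le |\{\sigma\text{-compatible orientations}\}| = |B(M)|$.

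\emph{Plan for Part (2).} Part (1) together with Theorem \ref{thm:torsortheorem} and the identity $|\Jac(M)| = |B(M)|$ give $|\mathcal{X}(M;\sigma,\sigma^*)| = |B(M)| = |\mathcal{G}(M)|$, so it suffices to prove injectivity of $\chi|_{\mathcal{X}(M;\sigma,\sigma^*)}$. Suppose $\mathcal{O}, \mathcal{O}' \in \mathcal{X}(M;\sigma,\sigma^*)$ lie in the same circuit-cocircuit reversal class. The key oriented-matroid fact (from orthogonality) is that a signed circuit and signed cocircuit both compatible with a given orientation have disjoint supports; this makes each adjacent pair of circuit and cocircuit reversals commute, so a witnessing reversal sequence can be rearranged to perform all circuit reversals first, yielding an intermediate $\mathcal{O}_1$. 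Using the same orthogonality fact, one shows cocircuit reversals preserve $\sigma$-compatibility (dually, circuit reversals preserve $\sigma^*$-compatibility); applied to $\mathcal{O}_1 \to \mathcal{O}'$ and its reverse, this forces $\mathcal{O}_1$ to be $\sigma$-compatible, and applied to $\mathcal{O} \to \mathcal{O}_1$, to be $\sigma^*$-compatible. Uniqueness of the $\sigma$-compatible representative in each circuit-reversal class then forces $\mathcal{O}_1 = \mathcal{O}$, and dually $\mathcal{O}_1 = \mathcal{O}'$, so $\mathcal{O} = \mathcal{O}'$.

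\emph{Main obstacle.} The hardest step is the geometric identification in Part (1): verifying that the unique lattice point in the half-open cell $Z(B)$ corresponds under $\psi^{-1}$ to precisely $\mathcal{O}_B$. This requires careful sign bookkeeping around the reference orientation $\mathcal{O}_0$, the translations $v_e$, and the implicit constraint that the coefficients $t_e$ for basis edges are forced by $\sigma^*$-compatibility on the fundamental cocircuits $C^*(B,e)$. A secondary technical subtlety in Part (2) is the preservation of $\sigma$-compatibility under cocircuit reversals (and its dual); this is where the disjoint-support orthogonality is actually used, rather than just at the level of commuting reversals.
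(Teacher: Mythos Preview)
Your Part (2) argument is essentially correct and close in spirit to the paper's: the key fact is that a positive circuit and a positive cocircuit in a given orientation have disjoint supports (this is exactly Proposition~\ref{prop:orientdecomp}), which lets circuit reversals and cocircuit reversals act independently. The paper's proof (Theorem~\ref{thm:chibijective}) is a bit more direct---it uses the invariance of the circuit-part/cocircuit-part partition rather than rearranging reversal sequences, and it proves surjectivity of $\chi$ constructively rather than by counting, so it avoids importing $|\Jac(M)|=|B(M)|$ as an input (cf.\ the remark after Theorem~\ref{thm:SScompatibleorientation}).

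Your Part (1), however, has a genuine gap in the surjectivity step. You assert
\[
|\mathcal{X}(M;\sigma,\sigma^*)| \le |\{\sigma\text{-compatible orientations}\}| = |B(M)|,
\]
but the second equality is false: $\sigma$-compatible discrete orientations are in bijection with lattice points of $Z_A$ (Propositions~\ref{prop:latticepointprop} and~\ref{prop:discretecompatible}), and there are $T_M(2,1)$ of those, not $T_M(1,1)=|B(M)|$. Relatedly, your claim that half-open cells ``partition the lattice points of $Z_A$ into singletons indexed by bases'' is wrong for the same reason: each half-open unit parallelepiped $Z(B)$ does contain exactly one lattice point, but these account for only $|B(M)|$ of the $T_M(2,1)$ lattice points; the remainder sit on boundary facets that no half-open cell captures. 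So the half-open/shifting picture gives you injectivity of $\hat\beta$ and the identification with $\mathcal{O}_B$, but it does not give surjectivity onto $\mathcal{X}(M;\sigma,\sigma^*)$.

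The paper closes this gap differently (Theorem~\ref{thm:betabijective}): given a $(\sigma,\sigma^*)$-compatible orientation $\mathcal{O}$, it shows via Farkas' lemma that the shifted point $\psi(\mathcal{O})-\epsilon w'$ still lies in $\widetilde{Z_A}$ (this is precisely where $\sigma^*$-compatibility enters), hence in some cell $\widetilde{Z(B)}$, whence $\mathcal{O}=\phi(B)=\hat\beta(B)$. Your geometric identification step and your duality argument for $\sigma^*$-compatibility otherwise align with the paper's approach (Lemma~\ref{lem:phiwelldefined}, Theorem~\ref{thm:phibetaequal}, Proposition~\ref{prop:betaCCM}). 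If you want to salvage a counting proof of surjectivity, you would need to first prove injectivity of $\chi$ and then invoke $|\mathcal{G}(M)|=|B(M)|$ externally; but note this sacrifices the self-contained character the paper emphasizes.
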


\begin{remark}
The proofs of Theorem~\ref{thm:torsortheorem} and Theorem~\ref{thm:mainbijectionforregularmatroids} do not assume {\em a priori} that  $|B(M)|=|\mathcal{X}(M;\sigma,\sigma^*)|=|\mathcal{G}(M)|=|\Jac(M)|$ for a regular matroid $M$, thus our work provides an independent proof of these equalities. Furthermore, we will show in Theorem~\ref{thm:realizablemainthm} below that the equality $|B(M)|=|\mathcal{X}(M;\sigma,\sigma^*)|$ continues to hold under the weaker assumption that $M$ is realizable over $\mathbb{R}$.
\end{remark}

\medskip

By Lemma~\ref{lem:gordan}, we may choose a vector $w' \in V^*$ which is compatible with $\sigma^*$, in the sense that $w' \cdot \sigma^*(C) > 0$ for each cocircuit $C$ of $M$.
Note that the zonotopal subdivision $\widetilde{\Sigma}$ of $\widetilde{Z_A}$ depends only on $\sigma$ (and the reference orientation $\mathcal{O}_0$) and the vector $w'$ depends only on $\sigma^*$.

The following theorem shows that the combinatorially defined map $\hat{\beta} : {\mathcal B}(M) \to {\mathcal X}(M)$ can be interpreted geometrically as first identifying a basis with a maximal cell in our zonotopal subdivision and then applying a ``shifting map''.

\medskip

\begin{theorem} \label{thm:betaphiagree}
\begin{enumerate}
\item Let $B$ be a basis of $M$. For all sufficiently small $\epsilon > 0$ the image of $\widetilde{Z(B)}$ under the map $v \mapsto v + \epsilon w'$ contains a unique lattice point $\widetilde{z_B}$ of $\widetilde{Z_A}$, which corresponds to a unique $(\sigma,\sigma^*)$-compatible discrete orientation $\mathcal{O}'_B$.
\item The map $\phi$ which takes each basis $B$ to the orientation $\mathcal{O}'_B$ coincides with the map $\hat{\beta}$ appearing in the statement of Theorem~\ref{thm:mainbijectionforregularmatroids}, and hence $\hat{\beta}$ gives a bijection between $B(M)$ and $\mathcal{X}(M;\sigma,\sigma^*)$.
\end{enumerate}
\end{theorem}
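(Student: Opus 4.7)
The plan is to prove (1) by recognizing $\widetilde{Z(B)}$ as a fundamental parallelepiped of the lattice $\Lambda := L^{-1}(\ZZ^r) \subseteq V^*$ of lattice points of $\widetilde{Z_A}$, and (2) by identifying the selected vertex of $\widetilde{Z(B)}$ edge-by-edge with the orientation $\mathcal{O}_B$ defined in Theorem~\ref{thm:mainbijectionforregularmatroids}. The structural input used throughout is that, since $A$ is totally unimodular and $B$ is a basis, the columns $\{A_e : e \in B\}$ form a $\ZZ$-basis of $\ZZ^r$; applying $L^{-1}$ then shows that $\{\pi_{V^*}(u_e) : e \in B\}$ is a $\ZZ$-basis of $\Lambda$. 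Since the translation vector $t_B = \sum_{e \notin B,\, v_e \neq 0} \pi_{V^*}(u_e)$ also lies in $\Lambda$, the parallelepiped $\widetilde{Z(B)}$ is a fundamental domain for $\Lambda$, and its half-open version contains exactly one point of $\Lambda$.

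For (1), I upgrade this to: for generic small $\epsilon > 0$, the closed shifted cell $\widetilde{Z(B)} + \epsilon w'$ contains a unique point of $\Lambda$, namely the vertex $p_S$ of $\widetilde{Z(B)}$ whose tangent cone contains $-w'$. The necessary genericity---that $w'$ have all nonzero coordinates in the basis $\{\pi_{V^*}(u_e) : e \in B\}$---follows from a dual-basis computation: since $\langle \pi_{V^*}(u_f), C^*(B,e)\rangle = \langle u_f, C^*(B,e)\rangle$ is $0$ for $f \in B \setminus \{e\}$ and $\pm 1$ for $f = e$, the fundamental cocircuits $\{C^*(B,e) : e \in B\} \subseteq V^*$ form the dual basis of $\{\pi_{V^*}(u_e) : e \in B\}$ up to explicit signs determined by $\mathcal{O}_0$. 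Hence the coordinates of $w'$ in this basis are, up to these signs, the inner products $\langle w, C^*(B,e)\rangle$ (using $w - w' \in (V^*)^\perp$), which are nonzero by the compatibility of $w$ with $\sigma^*$.

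For (2), the vertex $p_S = \sum_{e \in S} \pi_{V^*}(u_e) + t_B$ corresponds under $\psi$ to the orientation $\mathcal{O}'_B$ that, on $e \in B$, agrees with the reference iff $e \in S$, and, on $e \notin B$, agrees with the reference iff $v_e = A_e$. On $e \notin B$ the definition of $v_e$ is tailored so that this forces $\mathcal{O}'_B(e) = \sigma(C(B,e))(e) = \mathcal{O}_B(e)$ in either case. On $e \in B$, whether $e \in S$ is governed by the sign of the $e$-th coordinate of $-w'$, which by the dual-basis identification is the sign of $\langle w, C^*(B,e)\rangle$ times a fixed sign; by compatibility of $w$ with $\sigma^*$ this records whether $C^*(B,e)$ carries its $\sigma^*$-orientation, giving $\mathcal{O}'_B(e) = \sigma^*(C^*(B,e))(e) = \mathcal{O}_B(e)$. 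This establishes $\phi = \hat{\beta}$ and, in particular, $(\sigma,\sigma^*)$-compatibility of $\mathcal{O}'_B$. Bijectivity of $\hat{\beta}$ onto $\mathcal{X}(M;\sigma,\sigma^*)$ then follows from the tiling: the shifted cells of $\widetilde{\Sigma}$ partition $\widetilde{Z_A} + \epsilon w'$ with disjoint interiors, so distinct bases produce distinct $\widetilde{z_B}$ (injectivity of $\phi$), and a volume/Matrix--Tree count of lattice points in $\widetilde{Z_A}$ matches $|B(M)|$, forcing surjectivity onto the image.

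The main obstacle I anticipate is disciplined sign bookkeeping: four orientation conventions must be aligned---the reference $\mathcal{O}_0$, the ``$+1 \leftrightarrow$ reference'' rule used by $\psi$, the prescriptions of $\sigma$ and $\sigma^*$, and the signs in the dual-basis identification between $\pi_{V^*}(u_e)$ and $C^*(B,e)$---before the edge-by-edge match of $\mathcal{O}'_B$ with $\mathcal{O}_B$ becomes transparent. Once these are pinned down, both parts of the theorem reduce to unwinding definitions, with acyclicity of $\sigma^*$ entering in a single uniform way (via Farkas applied to $w$) to guarantee the genericity needed in (1).
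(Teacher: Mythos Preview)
Your treatment of part (1) and of the equality $\phi=\hat\beta$ is essentially the paper's: the dual-basis identification of $\{\pi_{V^*}(u_e):e\in B\}$ with the fundamental cocircuits $\{C^*(B,e)\}$, the genericity check $\langle w',\sigma^*(C^*(B,e))\rangle=\langle w,\sigma^*(C^*(B,e))\rangle\neq 0$, and the coordinate-by-coordinate match on $e\in B$ and $e\notin B$ all mirror Lemma~\ref{lem:phiwelldefined} and Theorem~\ref{thm:phibetaequal}. One minor difference is that you phrase (1) via ``fundamental domain of the lattice $\Lambda=L^{-1}(\ZZ^r)$'' using total unimodularity, whereas the paper argues directly that the shifted cell selects a unique \emph{vertex of the subdivision} (hence a $\sigma$-compatible discrete orientation); the latter framing is what lets the argument go through for arbitrary $\RR$-representable $M$.

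There is, however, a genuine gap in your surjectivity step. Your claim that ``a volume/Matrix--Tree count of lattice points in $\widetilde{Z_A}$ matches $|B(M)|$'' is false: the number of lattice points in $Z_A$ (equivalently in $\widetilde{Z_A}$) is $T_M(2,1)$, the number of $\sigma$-compatible discrete orientations, not $T_M(1,1)=|B(M)|$. (For $M=M(K_3)$ one has $T_M(x,y)=x^2+x+y$, so there are $7$ lattice points but only $3$ bases.) Thus the injective map $\phi:B(M)\to\{\sigma\text{-compatible orientations}\}$ cannot be forced onto all of $\mathcal{X}(M;\sigma,\sigma^*)$ by a cardinality match with lattice points. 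What is actually needed is the implication
\[
\mathcal{O}\in\mathcal{X}(M;\sigma,\sigma^*)\ \Longrightarrow\ \psi(\mathcal{O})\in\widetilde{Z(B)}+\epsilon w'\ \text{for some }B,
\]
and this is exactly where $\sigma^*$-compatibility of $\mathcal{O}$ must enter (for a generic $\sigma$-compatible orientation it fails). The paper handles this in Theorem~\ref{thm:betabijective}: after normalizing so that $\mathcal{O}\equiv{\bf 1}$, one needs a nonnegative ${\bf f}$ with $A{\bf f}=Aw$, and $\sigma^*$-compatibility of $\mathcal{O}$ says precisely that $z^TA\ge 0\Rightarrow z^T(Aw)\ge 0$, which by the Farkas lemma is equivalent to feasibility of that system. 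Your sketch invokes Farkas only for the existence of $w$; you need to invoke it a second time, in this direction, to close the argument.
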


Theorem~\ref{thm:mainbijectionforregularmatroids} is a simple consequence of Theorem \ref{thm:SScompatibleorientation} and Theorem \ref{thm:betaphiagree}. 

\medskip

\begin{example}
Let $G$ be a graph, and fix a vertex $q$ of $G$. In \cite{an2014canonical}, the authors prove that the {\em break divisors} of $G$ are the divisors associated to $q$-connected orientations offset by a chip at $q$. In other words (in the notation of \cite[Lemma 3.3]{an2014canonical}), a divisor $D$ is a break divisor if and only if $D = (q) + \nu_{\mathcal O}$ for some $q$-connected orientation $\mathcal O$.   They also show that break divisors of the corresponding metric graph $\Gamma$ induce a canonical subdivision of the $g$-dimensional torus $\Pic^g(\Gamma)$ into paralleletopes indexed by spanning trees of $G$, with the vertices of the subdivision corresponding to the break divisors of $G$. By applying a small generic shift to the vertices, this yields a family of ``geometric bijections'' between break divisors and spanning trees (cf.~\cite[Remark 4.26]{an2014canonical}).

We claim that the geometric bijections defined in \cite{an2014canonical} can be thought of as special cases of the bijections afforded by Theorem~\ref{thm:mainbijectionforregularmatroids}. 
By \cite[Theorem 10]{yuen2015geometric}, each such geometric bijection gives rise in a natural way to an acyclic orientation $\sigma$ of the cycles of $G$. 
On the other hand, we can use the recipe described in Example~\ref{planar} to produce a cut signature $\sigma^*$ such that every cut is oriented away from $q$.
Given a spanning tree $T$, the orientation $\mathcal{O}_T$ associated to the pair $(\sigma,\sigma^*)$ by Theorem~\ref{thm:mainbijectionforregularmatroids} will have the property that every edge $e$ in $T$ (considered as a tree rooted at $q$) is oriented away from $q$, and therefore $\mathcal{O}_T$ is $q$-connected \cite[Section 3]{backman2014partial}.  Let $D_T = \nu_{\mathcal{O}_T} + (q)$ be the corresponding break divisor.
Then $T \mapsto D_T$ will be the geometric bijection we started with.
\end{example}

\subsection{A partial extension to matroids realizable over $\mathbb{R}$} \label{sec:realizablecase}

Although the equality $|B(M)|=|\mathcal{G}(M)|=|\Jac(M)|$ does not hold for general oriented matroids (indeed, $\Jac(M)$ is not even well-defined in the general case), the notions of acyclic circuit/cocircuit signatures and $(\sigma,\sigma^*)$-compatible orientations continue to make sense whenever $M$ is realizable over ${\mathbb{R}}$. Furthermore, the geometric setup used to prove Theorem~\ref{thm:betaphiagree}, as well as the first half of Theorem~\ref{thm:SScompatibleorientation}, does not require $M$ to be regular but only realizable. Therefore we have the following result, which will be proved in \S\ref{sec:proof}:

\begin{theorem} \label{thm:realizablemainthm}
Let $M$ be an oriented matroid which is realizable over $\mathbb{R}$, and let $\sigma,\sigma^*$ be acyclic signatures of $C(M),C^*(M)$, respectively. Then the map $\hat{\beta}:B(M)\rightarrow\mathcal{X}(M;\sigma,\sigma^*)$ is a bijection.
\end{theorem}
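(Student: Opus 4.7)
The plan is to run the geometric proof of Theorem~\ref{thm:betaphiagree} and part (1) of Theorem~\ref{thm:SScompatibleorientation} essentially verbatim, exploiting (as foreshadowed in the preceding remark) the fact that those arguments never used total unimodularity of $A$, only that $A$ has real entries. We fix a real $r \times m$ matrix $A$ representing $M$, and form the column zonotope $Z_A$, the row zonotope $\widetilde{Z_A}$, and the cells $\widetilde{Z(B)}$ associated to the acyclic signature $\sigma$ and a reference orientation as in \S\ref{sec:brief_overview}. The first key input is that $\{\widetilde{Z(B)}\}_{B \in B(M)}$ forms a polyhedral subdivision of $\widetilde{Z_A}$; the proof of this is purely convex-geometric and depends only on acyclicity of $\sigma$ together with the real-representability of $M$.

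Next, we apply Farkas' Lemma to produce $w \in \RR^E$ with $w \cdot \sigma^*(C) > 0$ for every cocircuit $C$, and set $w' := \pi_{V^*}(w) \in V^*$. For each basis $B$, the parallelepiped $\widetilde{Z(B)}$ has $2^{|B|}$ vertices, and each such vertex, combined with the $\sigma$-prescribed orientation on $E \setminus B$ built into the definition of the cell, corresponds to a full discrete orientation of $M$. For all sufficiently small $\epsilon > 0$ the shifted cell $\widetilde{Z(B)} + \epsilon w'$ contains a unique vertex of $\widetilde{Z(B)}$ in its interior --- the vertex maximizing $\langle w', \cdot \rangle$ on $\widetilde{Z(B)}$ --- and we define $\phi(B)$ to be the discrete orientation corresponding to this distinguished vertex.

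Verifying $\phi = \hat{\beta}$ reduces to a local linear-algebraic computation identical to the regular case: for each $e \in B$, the edge of $\widetilde{Z(B)}$ at the distinguished vertex in direction $\pm \pi_{V^*}(u_e)$ is a positive scalar multiple of (a representative of) the fundamental cocircuit $\sigma^*(C^*(B,e))$; since $w$ is $\sigma^*$-compatible, maximizing $\langle w', \cdot \rangle$ picks the sign matching the $\sigma^*$-prescribed orientation of $e$, which is precisely the combinatorial recipe for $\hat{\beta}$. The containment $\phi(B) \in \mathcal{X}(M;\sigma,\sigma^*)$ then follows: $\sigma^*$-compatibility is automatic from the shift choice, and $\sigma$-compatibility follows because each vertex of each cell in the subdivision is the distinguished $\sigma$-compatible representative of its circuit-reversal class. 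Injectivity of $\phi = \hat{\beta}$ is immediate from the disjoint-interiors property of the subdivision (distinct cells yield distinct distinguished vertices for generic $w'$), and surjectivity onto $\mathcal{X}(M;\sigma,\sigma^*)$ is obtained by the reverse argument: any $\mathcal{O} \in \mathcal{X}(M;\sigma,\sigma^*)$ must occur as the distinguished vertex of exactly one cell, since $(\sigma,\sigma^*)$-compatibility pins down both the cell (via $\sigma$) and the vertex within the cell (via $\sigma^*$).

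The main obstacle is that without a lattice structure we lose the clean ``one lattice point per shifted cell'' counting argument that made bijectivity transparent in the regular case. Instead, we work directly at the level of vertices of the parallelepipeds $\widetilde{Z(B)}$, and the crucial input becomes the purely linear-algebraic identity that $\pi_{V^*}(u_e)$ for $e \in B$ is a positive scalar multiple of a representative of the fundamental cocircuit $C^*(B,e)$. This identity holds over $\RR$ just as readily as over $\ZZ$, so once the subdivision is in place the bijectivity argument carries through without further adaptation.
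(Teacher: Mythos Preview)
Your overall strategy matches the paper's: build the zonotopal subdivision, define the shift map $\phi$, verify $\phi=\hat{\beta}$, and then argue injectivity and surjectivity geometrically. But two of your steps do not go through as written.

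First, the linear-algebraic identity you call the ``crucial input'' is false. The vector $\pi_{V^*}(u_e)$ is \emph{not} in general a scalar multiple of the fundamental cocircuit $C^*(B,e)$. For instance, with $A=\left(\begin{smallmatrix}1&0&1\\0&1&1\end{smallmatrix}\right)$ and $B=\{1,2\}$ one computes $\pi_{V^*}(u_1)=\tfrac{1}{3}(2,-1,1)$ while $C^*(B,1)=(1,0,1)$. What is true, and what the paper actually uses in Theorem~\ref{thm:phibetaequal}, is that $\{\pi_{V^*}(u_e):e\in B\}$ and $\{C^*(B,f):f\in B\}$ are \emph{dual} bases of $V^*$: for $e,f\in B$ one has $\langle\pi_{V^*}(u_e),C^*(B,f)\rangle=\langle u_e,C^*(B,f)\rangle=0$ when $e\neq f$. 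Writing $w'=\sum_{e\in B}w_e\,\pi_{V^*}(u_e)$ then gives $\langle w',\sigma^*(C^*(B,f))\rangle=w_f\,\langle u_f,\sigma^*(C^*(B,f))\rangle$, and it is this pairing (not proportionality) that ties the sign of $w_f$ to the $\sigma^*$-orientation of $f$. Relatedly, your characterization of the distinguished vertex as ``the vertex maximizing $\langle w',\cdot\rangle$ on $\widetilde{Z(B)}$'' is also wrong for a non-orthogonal parallelepiped; the correct description is the unique vertex $v$ with $v-\epsilon w'\in\widetilde{Z(B)}$, equivalently the vertex selected by the signs of the coefficients $w_e$ above.

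Second, and more seriously, your surjectivity argument is not an argument. Saying ``$(\sigma,\sigma^*)$-compatibility pins down both the cell and the vertex'' presupposes exactly what must be shown. Concretely: given $\mathcal{O}\in\mathcal{X}(M;\sigma,\sigma^*)$ with image $v\in\widetilde{Z_A}$, you need $v-\epsilon w'$ to remain in $\widetilde{Z_A}$; only then does it land in the interior of some $\widetilde{Z(B)}$, making $v$ that cell's distinguished vertex. The point $v$ may lie on the boundary of $\widetilde{Z_A}$, so this is not automatic. The paper (Theorem~\ref{thm:betabijective}) handles this with Farkas' lemma: after normalizing so that $\mathcal{O}\equiv\mathbf{1}$, $\sigma^*$-compatibility says precisely that $z^TA\geq 0\Rightarrow z^TAw\geq 0$, and Farkas then produces $\mathbf{f}\geq 0$ with $A\mathbf{f}=Aw$, which is exactly the feasibility statement $v-\epsilon w'\in\widetilde{Z_A}$. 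This Farkas step is where $\sigma^*$-compatibility is genuinely used for surjectivity, and it cannot be replaced by the hand-wave you give.
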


An ingredient in the proof of Theorem~\ref{thm:realizablemainthm} is a continuous analogue of orientations (which we will refer to as {\em discrete orientations} whenever there is a risk of confusion), hence continuous analogues of acyclic signatures and circuit-cocircuit reversal systems. Using these notions, we can provide a combinatorial interpretation of {\em all} points of the zonotope $Z_A$ (not just the lattice points), thereby giving an alternate description of the zonotopal subdivision $\Sigma$ (resp. $\widetilde{\Sigma}$) of $Z_A$ (resp. $\widetilde{Z_A}$), which was defined above.

\subsection{Random sampling of bases} 

As in \cite{baker2013chip}, any computable bijection between bases and elements of $\Jac(M)$ gives rise an algorithm for randomly sampling bases of $M$. The idea is simple: it is easy to uniformly sample random elements from $\Jac(M)$, and applying the bijection yields a random spanning tree.

In order to make this into a practical method, one needs efficient algorithms for computing the basis associated to an element of $\Jac(M)$. In \S\ref{sec:bijectioncomputable} and Proposition~\ref{prop:groupactioncomputable}, we provide polynomial-time algorithms for such a task with respect to the family of bijections given by Theorem~\ref{thm:torsortheorem} and Theorem~\ref{thm:mainbijectionforregularmatroids}. We note that while our map from $\Jac(M)$ to $B(M)$ has a strong combinatorial flavor, our inverse algorithm uses ideas from linear programming.

We also remark that there are other algorithms for sampling random bases of a regular matroid, such as the random walk based method by Dyer and Frieze \cite{dyer1994randomwalks} (whose analysis also makes use of zonotopes). Since our algorithm requires solving multiple linear programs, its runtime is probably slower than some other known algorithms. However, our method can generate an exact uniform distribution using an information theoretical minimum amount of randomness, cf. the discussion by Lipton \cite{liptonblog}.

\subsection{Connections to Ehrhart theory and the Tutte polynomial}

Every matroid $M$ of rank $r$ has an associated {\em Tutte polynomial} $T_M(x,y)$, and every lattice polytope $P$ (e.g. the zonotope $Z_A$) has an associated {\em Ehrhart polynomial} $E_P(q)$ which counts the number of lattice points in positive integer dilates of $P$.  Using the relationship between $Z_A$ and $\sigma$-compatible (discrete or continuous) orientations of $M$, we obtain a new proof of the following identity originally due to Stanley:
\begin{equation} \label{eq:stanley}
E_Z(q) = q^r T_{M}(1 + 1/q,1).
\end{equation}
The proof involves defining a ``dilation'' $qM$ of $M$ for each positive integer $q$, with associated zonotope $qZ_A$. 

We also describe a direct bijective proof (without appealing to Ehrhart reciprocity) of the fact that the number of interior lattice points in $qZ_A$ is 
\[
q^r T_{M}(1-1/q,1).
\]

\subsection{Related literature} \label{sec:literature}

The study of zonotopal tilings, i.e. tilings of a zonotope by smaller zonotopes, is a classical topic in the theory of oriented matroids first initiated by Shephard \cite{shephard1974zonotopes}.   The central theorem in this area is the Bohne-Dress Theorem \cite{bohne1992zonotopaler,dress1989oriented}, which states that the poset of zonotopal tilings ordered by refinement is isomorphic to the poset of single-element lifts of the associated oriented matroid.  For instance, it can be shown that the subdivisions we consider in the paper correspond to precisely the generic, realizable single-element lifts of realizable oriented matroids; we will not further elaborate on this connection as it does not play a significant role in this paper.


\section{Background}

\subsection{Regular matroids} \label{sec:regularmatroids}

In this section we recall the definition of regular matroids and related objects.
We assume that the reader is familiar with the basic theory of matroids; some standard references include the book on matroids by Oxley \cite{oxley2006matroid} and the book on oriented matroids by Bj{\"o}rner et~al.~\cite{bjorner1999oriented}.

\medskip

An $r \times m$ matrix $A$ of rank $r$ with integer entries is called {\em totally unimodular} if every $k \times k$ submatrix has determinant in $\{ 0, \pm 1 \}$ for all $1 \leq k \leq r$. A matroid is called {\em regular} if it is representable over $\QQ$ by a totally unimodular matrix.

\medskip

The following lemma (see \cite{su2010lattice}) is the key fact used to show that various definitions in the subject are independent of the choice of a totally unimodular matrix $A$ representing $M$:

\begin{lemma} \label{lem:transformation}
If $A,A'$ are totally unimodular $r \times m$ matrices representing $M$, one can transform $A$ into $A'$ by multiplying on the left by an $r \times r$ unimodular matrix $U$, then permuting columns or multiplying columns by $-1$.  
\end{lemma}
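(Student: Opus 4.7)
The plan is to reduce both matrices to a common ``canonical form'' via the allowed operations, and then show that any remaining ambiguity lies in sign choices on columns.

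First I would fix a basis $B$ of $M$ and permute the columns of $A$ and $A'$ (an allowed operation) so that $B$ indexes the first $r$ columns of each matrix. Let $A_B$ and $A'_B$ denote the corresponding $r\times r$ submatrices. Since $A$ and $A'$ represent $M$, these submatrices are invertible; since $A$ and $A'$ are totally unimodular, $\det(A_B), \det(A'_B)\in\{\pm 1\}$, and a standard fact (Cramer's rule plus total unimodularity) shows that $A_B^{-1}$ and $(A'_B)^{-1}$ are themselves integer matrices. Hence they are unimodular. Left-multiplying $A$ by $A_B^{-1}$ and $A'$ by $(A'_B)^{-1}$ brings both matrices into the ``reduced'' form $[\,I_r\,|\,C\,]$ and $[\,I_r\,|\,C'\,]$. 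Because the construction $C\mapsto [\,I_r\,|\,C\,]$ preserves total unimodularity, both $C$ and $C'$ have entries in $\{0,\pm 1\}$.

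The next step is to compare $C$ and $C'$. Since both matrices represent the same matroid $M$ with the same ordered ground set, the fundamental circuit of a nonbasis element $e\notin B$ with respect to $B$ is the same for both representations. But in the reduced form, the support of the column of $C$ (resp.\ $C'$) indexed by $e$ is exactly this fundamental circuit. Therefore $C$ and $C'$ have identical support patterns, and the nonzero entries differ only in sign. It remains to show that the sign discrepancies can be absorbed by multiplying columns of the full matrix by $\pm 1$ (and possibly further unimodular row operations, namely sign flips on rows).

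The main obstacle is the sign-matching step: one must prove that the matrix $E$ whose $(i,j)$ entry is $C_{ij}/C'_{ij}$ (on the support, and arbitrary elsewhere) is a rank-one $\pm 1$ pattern, i.e.\ $E_{ij}=\varepsilon_i\eta_j$ for some signs $\varepsilon_i,\eta_j\in\{\pm 1\}$. To establish this, I would exploit total unimodularity of $C$ and $C'$ via the chain-of-$2\times 2$ minors argument: whenever a $2\times 2$ submatrix of $C$ supported on rows $i_1,i_2$ and columns $j_1,j_2$ has all four entries nonzero, its determinant being $\pm 1$ forces the product of signs $\mathrm{sgn}(C_{i_1j_1})\mathrm{sgn}(C_{i_2j_2})\mathrm{sgn}(C_{i_1j_2})\mathrm{sgn}(C_{i_2j_1})$ to take a value (either $-1$ or $+1$) determined purely by the matroid data; the same holds for $C'$, so the corresponding sign product of the ratio matrix $E$ is $+1$. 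Using connectivity of the bipartite ``support graph'' of $C$ on the components of $M$ (simple and parallel connections reduce to the connected case, with a separate sign choice on each block), this multiplicative $2$-cocycle condition forces $E$ to be a rank-one sign pattern $\varepsilon_i\eta_j$. Conjugating $C$ by the diagonal sign matrices $\mathrm{diag}(\varepsilon_i)$ on the left and $\mathrm{diag}(\eta_j)$ on the right then turns $C$ into $C'$; the left-multiplication is unimodular (indeed a signed permutation), and the right-multiplication is realized by negating columns. Tracking all these operations from the beginning exhibits the required transformation $A\mapsto A'$, completing the proof.
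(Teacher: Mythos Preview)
The paper does not give its own proof of this lemma; it is stated with a bare citation to Oxley, \emph{Matroid Theory}, Corollary~10.1.4. So there is no in-paper argument to compare against, and I evaluate your attempt on its own.

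Your overall strategy is the standard one and is sound: bring both $A$ and $A'$ to standard form $[I_r\mid C]$, $[I_r\mid C']$ relative to a common basis $B$ (the left multiplier $A_B^{-1}$ is unimodular by Cramer's rule, and $A_B^{-1}A$ is again TU by the usual ``pivoting preserves TU'' argument), observe that $C$ and $C'$ have identical supports since the support of each column records a fundamental circuit, and then argue that the residual sign discrepancy is a rank-one pattern absorbable by row and column sign flips.

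The genuine gap is in the sign-matching step. On a fully supported $2\times 2$ block of $C$ with $\pm 1$ entries the determinant is even, so total unimodularity forces it to be $0$ (not $\pm 1$ as you wrote); hence the product of the four entries is $+1$, and the same holds for $C'$, so the ratio pattern $E$ has product $+1$ around every $4$-cycle of the bipartite support graph. You then assert that connectivity makes $E$ a coboundary $\varepsilon_i\eta_j$. That inference fails whenever the cycle space of the support graph is not generated by $4$-cycles. A concrete instance already occurs for $M(K_4)$: with $B$ a triangle, the $3\times 3$ matrix $C$ has support pattern a $6$-cycle and \emph{no} fully supported $2\times 2$ block, so your condition is vacuous; yet after normalizing five entries to $+1$ along a spanning path, the sixth entry is forced to $-1$ by the $3\times 3$ minor (one computes $\det C = ace+bdf$, which must vanish). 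Thus the sign is pinned down by a larger minor, not by any $2\times 2$ minor, and your argument as written does not see this. The standard proof (Brylawski--Lucas, or Oxley's treatment) fixes this by an induction that, for each non-tree entry of $C$, invokes a minor of size matching the length of the corresponding fundamental cycle in the support graph; equivalently, one uses the full TU hypothesis, not just the $2\times 2$ case, to show that a TU $\{0,\pm 1\}$-matrix is determined by its support up to row and column signs.
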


If $M$ is a regular matroid of rank $r$ on $E$ and $A$ is any $r \times m$ totally unimodular matrix representing $M$ over $\QQ$, we define
$\Lambda_A(M) := \ker(A) \cap \ZZ^E$.  By Lemma~\ref{lem:transformation}, the isometry class of this lattice depends only on $M$, and not on the choice of the matrix $A$.  It is denoted by $\Lambda(M)$ and called the {\em circuit lattice} of $M$.

Similarly, we define $\Lambda^*_A(M)$ to be the intersection of the row space of $A$ with $\ZZ^E$, or equivalently the $\ZZ$-span of the rows of $A$.
The isometry class of this lattice also depends only on $M$.  It is denoted by $\Lambda^*(M)$ and called the {\em cocircuit lattice} of $M$.
(For proofs of all these statements, see \cite[\S{4.3}]{merino1999matroids} or \cite[\S{2.3}]{su2010lattice})

\medskip

The Jacobian group $\Jac(M)$ is defined to be the determinant group of $\Lambda(M)$, i.e., $\Jac(M) = \Lambda(M)^\# / \Lambda(M)$ where $\Lambda^\#$ is the {\em dual lattice} of $\Lambda$, i.e.,
\[
\Lambda^\# = \{ x \in \Lambda \otimes \QQ \; : \; \langle x,y \rangle \in \ZZ \; \forall \; y \in \Lambda \}.
\]

There are canonical isomorphisms 
\begin{equation} \label{eq:canonisom}
\Lambda(M)^\# / \Lambda(M) \cong \Lambda^*(M)^\# / \Lambda^*(M) \cong \frac{\ZZ^E}{\Lambda_A(M) \oplus \Lambda_A^*(M)}
\end{equation}
for every totally unimodular matrix $A$ representing $M$ (cf. \cite[Lemma~1 of \S{4}]{bacher1997lattice}).

The order of $\Jac(M)$ is equal to the number of {\em bases} of the matroid $M$ (cf. \cite[Theorem 4.3.2]{merino1999matroids}).
Moreover, we have $|\Jac(M)| = |{\rm det}(A^T A)|$ (cf. \cite[p.317]{godsil2013algebraic}), and in fact $\Jac(M)$ can naturally be identified with the cokernel of $A^T A$:

\begin{proposition}
The map $\frac{\ZZ^E}{\Lambda_A(M) \oplus \Lambda_A^*(M)}\rightarrow\coker(AA^T)$ given by $[\gamma]\mapsto [A\gamma]$ is well-defined and is an isomorphism.
\end{proposition}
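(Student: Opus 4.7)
The plan is to verify three things in turn: well-definedness of the map, surjectivity, and injectivity, with the latter two both reducing to elementary linear algebra over $\ZZ$ made possible by total unimodularity of $A$.

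For well-definedness, I would check that $A$ sends both summands $\Lambda_A(M)$ and $\Lambda^*_A(M)$ into $AA^T\ZZ^r$. By definition $\Lambda_A(M)\subseteq\ker(A)$, so the first summand maps to $0$. For the second, the identification $\Lambda^*_A(M)=A^T\ZZ^r$ (using the excerpt's observation that the cocircuit lattice coincides with the $\ZZ$-span of the rows of $A$, which does rely on total unimodularity) gives $A(\Lambda^*_A(M))=AA^T\ZZ^r$ exactly. Note also that the sum $\Lambda_A(M)+\Lambda^*_A(M)$ is indeed direct, as $\ker(A)$ and the row space of $A$ are orthogonal complements in $\RR^E$, so they intersect only in $0$.

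For surjectivity, I would appeal directly to total unimodularity: since $M$ has rank $r$ there is an $r\times r$ nonsingular submatrix $A_B$ of $A$, and by total unimodularity $\det(A_B)=\pm1$, so $A_B^{-1}$ has integer entries. Given any $v\in\ZZ^r$, the vector $\gamma\in\ZZ^E$ supported on $B$ with $\gamma_B=A_B^{-1}v$ lies in $\ZZ^E$ and satisfies $A\gamma=v$. Hence $A:\ZZ^E\to\ZZ^r$ is already surjective on the nose, and the induced map on cokernels is as well.

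For injectivity, suppose $[\gamma]\mapsto 0$, i.e.\ $A\gamma=AA^Tw$ for some $w\in\ZZ^r$. Setting $\gamma'=\gamma-A^Tw\in\ZZ^E$, one has $A\gamma'=0$, so $\gamma'\in\ker(A)\cap\ZZ^E=\Lambda_A(M)$. Since $A^Tw\in\Lambda^*_A(M)$, this exhibits $\gamma=\gamma'+A^Tw\in\Lambda_A(M)\oplus\Lambda^*_A(M)$, so $[\gamma]=0$. There is no serious obstacle here; the only delicate point is the identity $\Lambda^*_A(M)=A^T\ZZ^r$, which I would either cite from the discussion just above the proposition or verify by noting that for any integer vector $v=A^Tx$ with $x\in\QQ^r$, evaluating at an $r\times r$ unimodular submatrix $A_B$ forces $x\in\ZZ^r$.
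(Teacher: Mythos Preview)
Your proof is correct and follows essentially the same approach as the paper's own (very terse) proof: the paper establishes well-definedness and injectivity together via the chain of equalities $A(\Lambda_A(M) \oplus \Lambda_A^*(M))=A(\Lambda_A^*(M))=A(\Col_{\ZZ}A^T)=\Col_{\ZZ}AA^T$, and surjectivity from unimodularity of $A$. Your version spells out the injectivity step more explicitly (writing $\gamma = (\gamma - A^Tw) + A^Tw$) and gives a concrete reason why $A:\ZZ^E\to\ZZ^r$ is onto, but the underlying ideas are identical.
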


\begin{proof}
The map is well-defined because $A(\Lambda_A(M) \oplus \Lambda_A^*(M))=A(\Lambda_A^*(M))=A(\Col_{\mathbb{Z}}A^T)=\Col_{\mathbb{Z}}AA^T$, the equality also shows the map is injective. It is surjective because $Ax=b$ has a solution in $\ZZ^E$ for every $b\in\mathbb{Z}^r$, using the unimodularity of $A$.
\end{proof} 

\medskip

We now discuss regular matroids from an oriented matroid point of view. By \cite[Corollary 7.9.4]{bjorner1999oriented}, every oriented matroid structure on a regular matroid is realizable by some totally unimodular matrix, hence any two such structures differ by reorientations.

Let $C(\underline{M})$ (resp. $C^*(\underline{M})$) be the set of circuits (resp. cocircuits) of $M$.  
Let $A$ be any $r \times m$ totally unimodular matrix representing $M$ over $\QQ$.
An element $\alpha \in \Lambda_A(M)$ (resp. $\Lambda_A^*(M)$) is called a {\em signed circuit} (resp. {\em signed cocircuit}) of $M$ if $\alpha \neq 0$,
all coordinates of $\alpha$ are in $\{ 0, \pm 1 \}$, and the support of $\alpha$ is a circuit (resp. cocircuit) of $M$.
We let $C_A(M)$ (resp. $C^*_A(M)$) denote the set of signed circuits (resp. signed cocircuits) of $M$.
The notion of signed circuit (resp. signed cocircuit) is in fact intrinsic to $M$, independent of the choice of $A$, and thus it makes sense to speak of $C(M)$ and $C^*(M)$
as subsets of $\Lambda(M)$ and $\Lambda^*(M)$, respectively (cf. \cite[Lemma~10 and Proposition~12]{su2010lattice} and \cite[Theorem~4.3.4]{merino1999matroids}).

\medskip

There is a natural map $C(M) \to C(\underline{M})$ taking $\alpha \in C_A(M)$ to its support (with respect to any choice of $A$).
This map induces a bijection $C(M) / \langle \pm 1 \rangle \to C(\underline{M})$, i.e, for every circuit $\underline{C}$ of $M$ there are precisely two signed
circuits $\pm C$ with ${\rm supp}(C) = \underline{C}$.
(cf. \cite[Lemma 8]{su2010lattice} and \cite[Theorem~4.3.5]{merino1999matroids})
The same holds for cocircuits.

\subsection{Equivalence classes of orientations, and signatures}
\label{sec:equivclassorient}

An {\em (discrete) orientation} of a regular matroid $M$ is a map from the ground set $E$ of $M$ to $\{ -1, 1 \}$.
An orientation ${\mathcal O}$ is {\em compatible} with a signed circuit $C$ of $M$ if ${\mathcal O}(e) = C(e)$ for all $e$ in the support of $C$.

The {\em circuit reversal system} is the equivalence relation on the set $O(M)$ of all orientations of $M$ generated by {\em circuit reversals},
in which we reverse the sign of ${\mathcal O}(e)$ for all $e$ in (the support of) some signed circuit $C$ compatible with ${\mathcal O}$.
We can make the same definitions for cocircuits by replacing $M$ with its dual.

The {\em circuit-cocircuit reversal system} is the equivalence relation generated by both circuit and cocircuit reversals.  It is a theorem of Gioan \cite[Theorem 10(v)]{gioan2008circuit}(originally proved by a deletion-contraction argument) that the number of circuit-cocircuit equivalence classes of orientations is equal to the number of bases of $M$; Theorem~\ref{thm:torsortheorem} gives a {\em bijective} proof of this fact.

\medskip

A {\em signature} of $C(\underline{M})$ is a map $\sigma : C(\underline{M}) \to C(M)$ that picks an orientation for each (unsigned) circuit of the matroid underlying $M$.

A signature $\sigma$ of $C(\underline{M})$ is called {\em acyclic} if the only solution to $\sum_{C_i \in C(\underline{M})} a_i \sigma(C_i) = 0$ with the $a_i$ non-negative numbers is the trivial solution where all $a_i$ are equal to zero.
Signatures for $C^*(\underline{M})$ are defined analogously.

\section{Matroids over $\mathbb{R}$ and the main combinatorial bijection}\label{sec:proof}

Throughout this section, $M$ will denote an oriented matroid which is realizable over $\mathbb{R}$.  
Note in particular that every regular matroid has this property.

\subsection{Continuous circuit reversals and the zonotope associated to a representation of $M$}
\label{sec:continuousreversals}

The main goal of this section is to prove Theorem~\ref{thm:realizablemainthm}, which (when specialized to regular matroids) is a major component in the proof of Theorem~\ref{thm:mainbijectionforregularmatroids}. 

Our proof is geometric. In order to explain the basic idea, we fix once and for all a real $r \times m$ matrix $A$ realizing $M$, where $r$ is the rank of $M$ and the columns of $A$ are indexed by the elements of the ground set $E$ of $M$ (which we sometimes regard as $\{1,2,\ldots,m\}$).

\medskip

We first briefly explain how certain important notions that we introduced for regular matroids extend more generally to matroids representable over $\mathbb{R}$. 

For every circuit $C$ of $M$, the elements in $\ker(A)$ whose support is $C$, together with the zero vector, form a one-dimensional subspace $U_C$ in $\ker(A)$. 
Conversely, the support of any support-minimal nonzero element of $\ker(A)$ corresponds to a circuit of $M$. 
The two rays of $U_C$ correspond to the two orientations of $C$. Hence we may identify a signed circuit $C$ with an arbitrary vector ${\bf v}_C$ in the ray.
The same holds for cocircuits if we replace $\ker(A)$ by the row space of $A$.

The definition of an acyclic circuit (resp.~cocircuit) signature follows verbatim from the discussion in \S\ref{sec:equivclassorient}, except that we now have the equation  $\sum_{C_i \in C(\underline{M})} a_i {\bf v}_{\sigma(C_i)} = 0$, which is well-defined as different choices of ${\bf v}_{\sigma(C_i)}$ differ by a positive scalar multiple.

As a simple consequence of {\em Gordan's alternative} in the theory of linear programming \cite[p.~478]{bjorner1999oriented}, we have the following criterion/alternative description of an acyclic signature.

\begin{lemma} \label{lem:gordan}
Let $\sigma$ be a signature of $C(M)$. Then $\sigma$ is acyclic if and only if there exists $w \in {\mathbb R}^E$ such that $w\cdot{\bf v}_{\sigma(C)}>0$ for each circuit $C$ of $M$.
\end{lemma}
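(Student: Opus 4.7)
The plan is to invoke Gordan's theorem of the alternative, already cited in the excerpt as \cite[p.~478]{bjorner1999oriented}. Gordan's theorem asserts that for any real matrix $B$, exactly one of the following two alternatives holds: either (i) there is a nonzero vector $a \geq 0$ with $Ba = 0$, or (ii) there is a vector $w$ such that $w^T B > 0$ componentwise.

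First I would fix a totally unimodular matrix $A$ representing $M$ so that each signed circuit $\sigma(C)$ is identified with a vector in $\mathbb{R}^m$, and then arrange the vectors $\{\sigma(C) : C \in C(\underline{M})\}$ as the columns of a real matrix $B$ of size $m \times |C(\underline{M})|$. Acyclicity of $\sigma$, as defined in \S\ref{sec:signatures}, is then precisely the negation of alternative (i): no nonzero nonnegative column vector $a = (a_C)$ satisfies $\sum_{C} a_C \sigma(C) = Ba = 0$. On the other hand, the desired conclusion that there exists $w \in \mathbb{R}^m$ with $w \cdot \sigma(C) > 0$ for every circuit $C$ is exactly alternative (ii) applied to this same $B$, since $w^T B > 0$ componentwise unpacks to $w \cdot \sigma(C) > 0$ for each $C$. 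Gordan's theorem then yields the equivalence in one line.

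I expect no real obstacle here; the content of the lemma is a direct repackaging of Gordan's alternative, and there is no need to translate between $\mathbb{Q}$ and $\mathbb{R}$ because the definition of acyclicity and Gordan's theorem are both formulated over $\mathbb{R}$. If a self-contained argument were preferred to citation, one could instead apply a separating hyperplane theorem to the finitely generated convex cone $K = \{\sum_C a_C \sigma(C) : a_C \geq 0\}$: by acyclicity, $K$ does not contain $0$ except trivially, so there is a hyperplane through the origin with all $\sigma(C)$ on its strictly positive side, whose normal is the desired $w$. The reverse direction is immediate, since if $\sum_C a_C \sigma(C) = 0$ with each $a_C \geq 0$, then pairing with such a $w$ gives $0 = \sum_C a_C (w \cdot \sigma(C))$, a sum of nonnegative terms with strictly positive coefficients $w \cdot \sigma(C)$, forcing every $a_C = 0$.
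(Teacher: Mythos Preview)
Your proposal is correct and follows exactly the approach the paper takes: the paper simply states that the lemma is ``a simple consequence of Gordan's alternative'' and gives no further argument. Your write-up merely unpacks this citation by setting up the matrix $B$ explicitly, which is fine and entirely in line with the paper's intent.
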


In the situation of Lemma~\ref{lem:gordan}, we say that $w$ {\em induces} $\sigma$. 
By the orthogonality of vectors representing signed circuits and cocircuits, given any pair of acyclic signatures $\sigma,\sigma^*$ of $C(M)$ and $C^*(M)$, respectively, there exists $w \in {\mathbb R}^E$ that induces both $\sigma$ and $\sigma^*$.

\medskip

We state an elementary lemma for realizable oriented matroids. In abstract oriented matroid terms, it is to say that every signed vector of an oriented matroid is a {\em conformal composition} of signed circuits.

\begin{lemma} \cite[Lemma 6.7]{Ziegler_book} \label{lem:circuitdecomposition}
Let $u\in\mathbb{R}^E$ be a vector in $\ker(A)$. Then $u$ can be written as a sum of signed circuits $\sum {\bf v}_C$ where the support of each $C$ is inside the support of $u$, and for each $e$ in the support of $C$, the signs of $e$ in $C$ and $u$ agree.
\end{lemma}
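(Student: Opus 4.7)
The plan is to prove the lemma by induction on $|\mathrm{supp}(u)|$, with the main work being the extraction of a single signed circuit from $u$ that is conformal with $u$ (i.e.\ supported inside $\mathrm{supp}(u)$ with matching signs on its support). The base case $u=0$ is the empty sum. For the inductive step with $u \neq 0$, once such a conformal signed circuit $C$ is produced, I would set
\[
\lambda \;:=\; \min_{e \in \mathrm{supp}(C)} \frac{u(e)}{C(e)},
\]
which is strictly positive by conformality, and observe that $u - \lambda C \in \ker(A)$ is still coordinate-wise conformal with $u$ but has strictly smaller support (the coordinate achieving the minimum vanishes). Applying the inductive hypothesis to $u - \lambda C$ and adding back $\lambda C$ yields the desired decomposition.

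The core step is the following sublemma: \emph{if $u \in \ker(A)$ is nonzero, then there exists a signed circuit $C$ with $\mathrm{supp}(C) \subseteq \mathrm{supp}(u)$ and $\mathrm{sign}(C(e)) = \mathrm{sign}(u(e))$ for every $e \in \mathrm{supp}(C)$.} To prove this, I would consider the set
\[
S \;:=\; \{\, v \in \ker(A) \setminus \{0\} \;:\; \mathrm{supp}(v) \subseteq \mathrm{supp}(u),\ v(e)\,u(e) \geq 0 \text{ for all } e \in E\,\},
\]
which is nonempty since $u \in S$. Pick $v \in S$ of minimum support size. The claim is that $\mathrm{supp}(v)$ is a circuit of $M$; since support-minimal nonzero elements of $\ker(A)$ are exactly the scalar multiples of signed circuits (by the discussion preceding the lemma), this will give the required conformal signed circuit, possibly after rescaling.

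Suppose for contradiction that $\mathrm{supp}(v)$ is not a circuit. Then there exists a nonzero $w \in \ker(A)$ with $\mathrm{supp}(w) \subsetneq \mathrm{supp}(v)$. Replacing $w$ by $-w$ if necessary, I may assume there is some $e_0 \in \mathrm{supp}(w)$ with $\mathrm{sign}(w(e_0)) = \mathrm{sign}(v(e_0))$. Set
\[
t \;:=\; \min\!\left\{\,\frac{v(e)}{w(e)} \;:\; e \in \mathrm{supp}(w),\ \mathrm{sign}(w(e)) = \mathrm{sign}(v(e))\,\right\} \;>\; 0,
\]
and let $v' := v - tw$. A case check on each $e$ (whether $w(e) = 0$, has the same sign as $v(e)$, or the opposite sign) shows that $v'(e)$ is either zero or has the same sign as $v(e)$, so $v'$ remains conformal with $u$; the coordinate attaining the minimum forces $|\mathrm{supp}(v')| < |\mathrm{supp}(v)|$; and $v' \neq 0$ since otherwise $v = tw$ would force $\mathrm{supp}(v) = \mathrm{supp}(w)$. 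Thus $v' \in S$, contradicting minimality.

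The main obstacle is the sublemma: the simultaneous preservation of conformality while strictly reducing support through the pencil $v - tw$ requires the careful sign-by-sign analysis above, together with the correct choice of direction for $w$. Once the sublemma is in hand, the inductive shaving argument that finishes the lemma is routine.
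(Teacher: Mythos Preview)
Your proof is correct and follows the same overall architecture as the paper's: induct on $|\mathrm{supp}(u)|$, extract a conformal signed circuit, subtract a maximal multiple, and recurse on the residue. The difference lies in how the conformal circuit is produced. The paper normalizes so that $u>0$ and appeals to linear programming: the program $\min\{{\bf 1}^T v : Av=0,\ v\ge 0,\ v_1=u_1\}$ is feasible (witnessed by $u$) and bounded, so an optimal basic solution exists, and such a solution is support-minimal in $\ker(A)$, hence a signed circuit. Your argument instead takes any $v\in S$ of minimum support and, assuming $\mathrm{supp}(v)$ is not a circuit, uses the pencil $v-tw$ with a strictly smaller-support $w\in\ker(A)$ to produce a conformal element of $S$ with smaller support. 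Your route is more elementary and self-contained (no LP machinery), while the paper's is terser but implicitly relies on the existence of a vertex optimum. Either way the inductive ``peel off $\lambda C$'' step is identical.
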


A {\em continuous orientation} ${\mathcal O}$ of $M$ is a function $E \to [-1,1]$, and the {\em $e$-th coordinate} of $\mathcal{O}$ is the value $\mathcal{O}(e)$.  If ${\mathcal O}(e) \in \{-1,1\}$ for all $e \in M$, we say that ${\mathcal O}$ is a {\em discrete orientation}.

A continuous orientation ${\mathcal O}$ is {\em compatible} with a signed circuit $C$ of $M$ if ${\mathcal O}(e) \neq -\sign(C(e))$ for all $e$ in the support of $C$. 
Given a continuous orientation ${\mathcal O}$ compatible with a signed circuit $C$, a {\em continuous circuit reversal} with respect to $C$ replaces ${\mathcal O}$ by a new continuous orientation ${\mathcal O}-\epsilon {\bf v}_C$ for some $\epsilon > 0$.  
(In particular, we require $\epsilon$ to be small enough so that $({\mathcal O}-\epsilon {\bf v}_C)(e) \in [-1,1]$ for all $e \in E$.)

The {\em continuous circuit reversal system} is the equivalence relation on the set $CO(M)$ of all continuous orientations of $M$ generated by all possible continuous circuit reversals. We can make the same definitions for cocircuits by replacing $M$ with its dual.

\medskip

Next we define the {\em (column) zonotope $Z_A$ associated to $A$} to be the Minkowski sum of the columns of $A$ (thought of as line segments in ${\mathbb R}^r$), i.e.,
\[
Z_A = \{ \sum_{i=1}^m c_i v_i \; : \; 0 \leq c_i \leq 1 \}
\]
where $v_1,\ldots,v_r$ are the columns of $A$.\footnote{Some authors consider variations on this zonotope, e.g. $\sum_{i=1}^m[-v_i,v_i]$, $\sum_{i=1}^m[-v_i/2,v_i/2]$, or $\sum_{i=1}^m[v_i^-,v_i^+]$, where $v^-$ and $v^+$ are the negative and positive parts of $v$, respectively.}

\begin{remark} \label{rmk:graphicmatrix}
When $M=M(G)$ is a graphic matroid, it is usually more convenient to take $A$ to be the full adjacency matrix of $G$, rather than a modified adjacency matrix with one row removed, when defining the corresponding zonotope.  This has the advantage of producing a canonically defined object, and since all of these different zonotopes are isomorphic, there is little harm in doing this.
\end{remark}

\medskip

There are several important connections between the zonotope $Z_A$ and equivalence classes of orientations of $M$.
For the statement, given $\alpha \in [-1,1]$ we denote by $\hat\alpha$ the real number $\frac{1}{2}(\alpha + 1) \in [0,1]$.
Define $\psi : O(M) \to Z_A$ be the map taking an orientation ${\mathcal O}$ (thought of as an element of $[-1,1]^E$) to 
\begin{equation} \label{eq:psidef}
\psi({\mathcal O}) := \sum_{i=1}^m \widehat{{\mathcal O}(e_i)} v_i \in Z_A.
\end{equation}

\medskip

\begin{proposition} \label{prop:ctslatticepointprop}
The map $\psi$ gives a bijection between continuous circuit-reversal classes of continuous orientations of $M$ and points of the zonotope $Z_A$.
\end{proposition}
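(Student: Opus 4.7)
The plan is to verify three claims: (i) $\psi$ is constant on continuous circuit-reversal classes, (ii) $\psi$ surjects onto $Z_A$, and (iii) if $\psi(\mathcal{O}) = \psi(\mathcal{O}')$ then $\mathcal{O}$ and $\mathcal{O}'$ are equivalent. The first two are routine; the last is the heart of the matter.

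For (i), a reversal $\mathcal{O} \mapsto \mathcal{O} - \epsilon C$ changes $\psi$ by $-\tfrac{\epsilon}{2}\sum_i C(e_i)\, v_i = -\tfrac{\epsilon}{2}\, AC$, which vanishes because $C \in \ker(A)$. For (ii), any point $p = \sum_i c_i v_i \in Z_A$ with $c_i \in [0,1]$ equals $\psi(\mathcal{O})$ for the continuous orientation defined by $\mathcal{O}(e_i) := 2c_i - 1$.

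For (iii), suppose $\psi(\mathcal{O}) = \psi(\mathcal{O}')$; then $u := \mathcal{O} - \mathcal{O}'$ satisfies $Au = 0$. Applying Lemma~\ref{lem:circuitdecomposition} to $u$, I can write $u = \sum_{j=1}^{N} \lambda_j C_j$ with $\lambda_j > 0$, $\mathrm{supp}(C_j) \subseteq \mathrm{supp}(u)$, and $\sign(C_j(e)) = \sign(u(e))$ for every $e \in \mathrm{supp}(C_j)$. The plan is to reach $\mathcal{O}'$ from $\mathcal{O}$ by performing the reversals $\mathcal{O}_{k-1} \mapsto \mathcal{O}_k := \mathcal{O}_{k-1} - \lambda_k C_k$ in sequence for $k = 1, \ldots, N$, starting from $\mathcal{O}_0 := \mathcal{O}$; by construction $\mathcal{O}_N = \mathcal{O} - u = \mathcal{O}'$.

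The main obstacle is to verify that each intermediate $\mathcal{O}_{k-1}$ actually lies in $[-1,1]^E$ and is compatible with $C_k$, so that the reversal at step $k$ is legitimate. Both facts follow from the identity $\mathcal{O}_{k-1} - \mathcal{O}' = \sum_{j \geq k} \lambda_j C_j$ combined with the sign property from the decomposition: every nonzero term on the right has sign $\sign(u(e))$, so $\mathcal{O}_{k-1}(e)$ is a monotone interpolation between $\mathcal{O}'(e)$ and $\mathcal{O}(e)$ and hence remains in $[-1,1]$. For $e \in \mathrm{supp}(C_k)$ the same reasoning shows that $\mathcal{O}_{k-1}(e) - \mathcal{O}'(e)$ is nonzero with sign $\sign(C_k(e))$, which forces $\mathcal{O}_{k-1}(e) \neq -\sign(C_k(e))$---the compatibility condition. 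Once this bookkeeping is handled, the chain of reversals witnesses $\mathcal{O} \sim \mathcal{O}'$, completing the argument.
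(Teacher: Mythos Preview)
Your proof is correct and follows essentially the same approach as the paper's: surjectivity and well-definedness are immediate, and for injectivity you decompose $u=\mathcal{O}-\mathcal{O}'$ via Lemma~\ref{lem:circuitdecomposition} and realize the equivalence by successive reversals along the resulting circuits. The paper's proof asserts without detail that the reversals may be performed ``in any order (using the weights as the $\epsilon$'s)''; your added bookkeeping---that each intermediate $\mathcal{O}_{k-1}$ stays in $[-1,1]^E$ and is compatible with $C_k$ because all remaining terms $\lambda_j C_j(e)$ share the sign of $u(e)$---fills in exactly the verification the paper omits.
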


\begin{proof} By definition, $\psi$ sends every continuous orientation to some point in $Z_A$, and $\psi$ is surjective. By the orthogonality of circuits and cocircuits, two continuous orientations in the same circuit-reversal class map to the same point of $Z_A$, so it remains to show the converse. Suppose $\psi(\mathcal{O})=\psi(\mathcal{O}')$. By Lemma~\ref{lem:circuitdecomposition}, $\mathcal{O}-\mathcal{O}'$ can be written as a sum of signed circuits in which each signed circuit is compatible with $\mathcal{O}$, and $\mathcal{O}$ can be transformed to $\mathcal{O}'$ via the corresponding continuous circuit reversals in any order.
\end{proof}

\subsection{Distinguished orientations within each equivalence class} \label{sec:DistOrient}

If we fix an acyclic signature $\sigma$ of $C(M)$, there is a natural way to pick out a distinguished continuous orientation from each continuous circuit reversal class.

Define a continuous orientation ${\mathcal O}$ to be {\em $\sigma$-compatible} if every signed circuit $C$ of $M$ compatible with ${\mathcal O}$ is oriented according to $\sigma$.  

\medskip

\begin{proposition} \label{prop:continuouscompatible}
Let $\sigma$ be an acyclic signature of $C(M)$. Then each continuous circuit-reversal class $M$ contains a unique $\sigma$-compatible continuous orientation.
\end{proposition}

\begin{proof} By Lemma~\ref{lem:gordan}, there exists $w\in\mathbb{R}^E$ such that $w\cdot{\bf v}_{\sigma(C)}>0$ for every circuit $C$ of $M$. Consider the function $P(\mathcal{O}'):=w\cdot\mathcal{O}'$. If $-\sigma(C)$ is compatible with $\mathcal{O}$ for some circuit $C$, then performing a continuous circuit reversal with $-\sigma(C)$ strictly increases the value of $P$, so every maximizer of $P$ inside a class (if exists) must be $\sigma$-compatible. The set of continuous orientations in a continuous circuit reversal class is the fiber of $\psi$ over a point in $Z_A$, which is a closed subset of the hypercube, so such maximizer must exist as $P$ is continuous.

For uniqueness, suppose there are two distinct $\sigma$-compatible continuous orientations $\mathcal{O},\mathcal{O}'$ in a continuous circuit-reversal class. By Lemma~\ref{lem:circuitdecomposition}, $\mathcal{O}$ can be transformed to $\mathcal{O}'$ via a series of continuous circuit-reversals in which each signed circuit involved is compatible with $\mathcal{O}$, hence agrees with $\sigma$. If the last signed circuit involved in the series of reversals is $C$, then $-C$ is a signed circuit compatible with $\mathcal{O}'$, hence it agrees with $\sigma$ as well, which is a contradiction.
\end{proof}

\begin{remark}
By interpreting $\sigma$-compatible orientations as maximizers of the linear function $P$, it is easy to see that the map $\mu:Z_A\rightarrow CO(M)$, which takes a point $z$ of $Z_A$ to the unique $\sigma$-compatible continuous orientation in the continuous circuit-reversal class corresponding to $z$, is a continuous section to the map $\psi$. Such a point of view is closely related to the classical theory of zonotopal tilings.
\end{remark}

We will call orientations that are compatible with both $\sigma$ and $\sigma^*$ {\em $(\sigma,\sigma^*)$-compatible orientations}. 

\medskip

The set of {\em discrete} $(\sigma,\sigma^*)$-compatible orientations will be denoted by $\mathcal{X}(M;\sigma,\sigma^*)$.
In \S\ref{discretestuffforregularmatroids}, we will establish an analogue of Proposition~\ref{prop:continuouscompatible} for discrete orientations of {\em regular} matroids.

\subsection{Bi-orientations and bases}

Let ${\mathcal O}$ be a continuous orientation of $M$.  We call an element $e \in E$ {\em bi-oriented} with respect to ${\mathcal O}$ if 
${\mathcal O}(e) \in (-1,1)$.

Note that if we {\em orient} any bi-oriented element $e$ in a $\sigma$-compatible continuous orientation $\mathcal{O}$, i.e., we set $\mathcal{O}(e)$ equal to either 1 or $-1$, the new continuous orientation is still $\sigma$-compatible.

\begin{proposition} \label{prop:uniqueext}
Let $\sigma$ be an acyclic signature of $C(M)$.
\begin{enumerate}
\item If ${\mathcal O}$ is a $\sigma$-compatible continuous orientation then the set of $e \in E$ which are bi-oriented with respect to ${\mathcal O}$ is independent in $M$.
\item If $B$ is a basis for $M$ and $b : B \to (-1,1)$ is any function, there is a unique $\sigma$-compatible continuous orientation 
${\mathcal O} = {\mathcal O}(B,b)$ such that ${\mathcal O}(e) = b(e)$ for all $e \in B$ and ${\mathcal O}(e) \in \{ \pm 1 \}$ for all $e \not\in B$.
\end{enumerate}
\end{proposition}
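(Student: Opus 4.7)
For part (1), I would argue by contradiction. Suppose the bi-oriented set of $\mathcal{O}$ contains the support of some signed circuit $C$. Then $\mathcal{O}(e) \in (-1,1)$ for every $e$ in this support, so $\mathcal{O}(e) \neq \pm 1$, and since $-\sign(C(e))$ and $-\sign((-C)(e))$ both lie in $\{\pm 1\}$, both $C$ and $-C$ are compatible with $\mathcal{O}$. But $\sigma$-compatibility singles out exactly one signed version of each circuit as compatible, a contradiction.

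For the uniqueness half of part (2), fix $e \notin B$. Since $\text{supp}(Z(e,B)) \subseteq B \cup \{e\}$ and $\mathcal{O}(f) = b(f) \in (-1,1)$ for all $f \in B$, the compatibility condition $\mathcal{O}(f) \neq -\sign(D(f))$ holds automatically at $f \in B$ for either signed version $D = \pm \sigma(Z(e,B))$. Compatibility of $D$ with $\mathcal{O}$ thus reduces to a single sign condition at $e$; combined with $\mathcal{O}(e) \in \{\pm 1\}$, this shows that the sign of $\mathcal{O}(e)$ determines which of $\pm \sigma(Z(e,B))$ is compatible. Requiring that only $\sigma(Z(e,B))$, not its negative, be compatible pins down $\mathcal{O}(e) = \sign(\sigma(Z(e,B))(e))$, establishing uniqueness.

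For existence, I would take the formula just derived as the definition of $\mathcal{O}$: set $\mathcal{O}(e) = b(e)$ for $e \in B$ and $\mathcal{O}(e) = \sign(\sigma(Z(e,B))(e))$ for $e \notin B$. The work is to verify $\sigma$-compatibility. The key fact is that the signed fundamental circuits $\{\sigma(Z(e,B))\}_{e \notin B}$ form a basis of $\ker(A)$: there are $m - r = \dim \ker(A)$ of them, and they are linearly independent because the only coordinate of $\sigma(Z(e,B))$ in $E \setminus B$ that is nonzero is the $e$-th. So any signed circuit $C$ compatible with $\mathcal{O}$ admits an expansion $C = \sum_{e \notin B} \lambda_e \sigma(Z(e,B))$. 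Reading off the $f$-coordinate for $f \in E \setminus B$ gives $\lambda_f = C(f)/\sigma(Z(f,B))(f)$; for $f \in \text{supp}(C) \setminus B$, compatibility of $C$ with $\mathcal{O}$ at $f$ yields $\sign(C(f)) = \mathcal{O}(f) = \sign(\sigma(Z(f,B))(f))$, so $\lambda_f > 0$, while $\lambda_f = 0$ for $f \in E \setminus (B \cup \text{supp}(C))$. Since $B$ is independent, $\text{supp}(C) \setminus B$ is nonempty. Choosing $w \in \mathbb{R}^E$ that induces $\sigma$ via Lemma~\ref{lem:gordan}, we obtain $w \cdot C = \sum_f \lambda_f (w \cdot \sigma(Z(f,B))) > 0$, which forces $C = \sigma(\text{supp}(C))$ (as $w \cdot (-\sigma(\text{supp}(C))) < 0$).

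The main obstacle is exactly the $\sigma$-compatibility verification in the existence proof: one has to pair each coordinate of an arbitrary compatible $C$ with the fundamental circuit containing it, extract positivity of the expansion coefficients, and close the loop via the Farkas-style linear functional $w$ furnished by acyclicity. An alternative route via Theorem~\ref{thm:continuouscompatible}, applied to a starting orientation with $b$ on $B$ and $\pm 1$ on $E \setminus B$, is tempting; but it is not obvious that the $\sigma$-compatible representative of the resulting class preserves the prescribed values on $B$ or keeps the other coordinates in $\{\pm 1\}$, so the direct fundamental-circuit construction above is preferable.
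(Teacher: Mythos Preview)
Your proofs of part~(1) and of uniqueness in part~(2) match the paper's almost verbatim.

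Your existence argument for part~(2), however, is genuinely different from the paper's and in some ways cleaner. The paper argues by contradiction with a minimality device: assuming $\mathcal{O}$ is compatible with some $-\sigma(C)$, it picks such a $C$ with $|C\setminus B|$ minimal, eliminates an element $e\in C\setminus B$ against the fundamental circuit $C'=Z(e,B)$ using the circuit-decomposition Lemma~\ref{lem:circuitdecomposition}, and extracts from the resulting decomposition a bad circuit $D$ with $|D\setminus B|<|C\setminus B|$. Your route bypasses this elimination entirely: you expand an arbitrary compatible $C$ in the basis $\{\sigma(Z(e,B))\}_{e\notin B}$ of $\ker(A)$, read off that the coefficients are nonnegative by comparing signs at coordinates in $E\setminus B$, and then apply the linear functional $w$ from Lemma~\ref{lem:gordan} to conclude $w\cdot C>0$, forcing $C$ to agree with $\sigma$. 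This is a direct linear-algebraic argument that never invokes Lemma~\ref{lem:circuitdecomposition} or any inductive structure; the paper's approach, by contrast, is more in the spirit of oriented-matroid circuit elimination. Both work in the full generality of $\mathbb{R}$-representable matroids.
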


\begin{proof} For the first part, suppose the set $S$ of bi-oriented elements in a continuous orientation $\mathcal{O}$ is not independent. Then $S$ contains some circuit $C$, and $\mathcal{O}$ is compatible with both orientations of $C$, so $\mathcal{O}$ is not $\sigma$-compatible.

\medskip

For the uniqueness assertion in (2), note that each element not in $B$ must be oriented in agreement with the orientation of its fundamental circuit given by $\sigma$, as for otherwise the fundamental circuit will be compatible with $-\sigma$. Such unique choice of orientations outside $B$, together with $b$ itself, gives a continuous orientation $\mathcal{O}$.

\medskip

Now we claim that such $\mathcal{O}$ is $\sigma$-compatible. If not, then $\mathcal{O}$ is compatible with $-\sigma(C)$ for some circuit $C$. We choose $C$ such that $|C\setminus B|\neq 0$ is minimum among all such circuits. Pick any $e\in C\setminus B$ and let $C'$ be its fundamental circuit with respect to $B$. Then $\mathcal{O}$ is compatible with $\sigma(C')$ by construction. Pick suitable ${\bf v}_{-\sigma(C)},{\bf v}_{\sigma(C')}$ such that they agree on the $e$-th coordinate. Using Lemma~\ref{lem:circuitdecomposition}, we write ${\bf v}_{-\sigma(C)}-{\bf v}_{\sigma(C')}=\sum_{D} {\bf v}_D$ with $D$'s being signed circuits conformal with the left hand side, hence they do not contain $e$. Since ${\bf v}_{\sigma(C)}+{\bf v}_{\sigma(C')}+\sum_{D} {\bf v}_D=0$, at least one such $D$ is oriented opposite to $\sigma$ by acyclicity. Then such $D$ is compatible with $\mathcal{O}$: each element of $D$ is either in $B$ (which is bi-oriented in $\mathcal{O}$), or from $C$ and oriented as in $-\sigma(C)$ (which is compatible with $\mathcal{O}$). However, $D\setminus B\subset (C\setminus B)\setminus e$, contradicting the minimality of $C$.
\end{proof}

\subsection{Polyhedral subdivision of the zonotope}
\label{sec:polyhedralsubdivision}

Let $\sigma$ be an acyclic signature of $C(M)$.
For each basis $B$ of $M$, let $CO^\circ(B)$ be the set of $\sigma$-compatible continuous orientations of the form ${\mathcal O}(B,b)$ as $b$ ranges over all possible $b : B \to (-1,1)$.  Let $Z^\circ(B) = \psi(CO^\circ(B))$ be the projection of $CO^\circ(B)$ to $Z_A$, and let $Z(B)$ be the topological closure of $Z^\circ(B)$ in $Z_A$.  
Finally, let $CO(B) = \mu(Z(B))$ be the closure of $CO^\circ(B)$ in $CO(M)$.

\begin{proposition} \label{prop:zonotopedecomp}
\begin{enumerate}
\item The union of $Z(B)$ over all bases $B$ of $M$ is equal to $Z_A$, and if $B,B'$ are distinct bases then $Z^\circ(B)$ and $Z^\circ(B')$ are disjoint.
\item The collection of $Z(B)$ as $B$ varies over all bases $B$ for $M$ gives a polyhedral subdivision of $Z_A$ whose vertices (i.e., $0$-cells) correspond via $\psi$ to the $\sigma$-compatible discrete orientations of $M$. 
\end{enumerate}
\end{proposition}

\begin{proof} The only non-trivial part of (1) is the first half. By Proposition \ref{prop:ctslatticepointprop} and \ref{prop:continuouscompatible}, every point of $Z_A$ is of the form $\psi(\mathcal{O})$ for some $\sigma$-compatible continuous orientation $\mathcal{O}$. Hence by Proposition \ref{prop:uniqueext}, it suffices to show that if the set $\hat{B}$ of bi-oriented elements in $\mathcal{O}$ do not form a basis, then we can bi-orient some element in $\mathcal{O}$ while maintaining $\sigma$-compatiblility; by induction, we will end up with a bi-oriented basis $B$, which implies that $\psi(\mathcal{O})$ is a limit point of $Z^\circ(B)$.

Suppose that for every $e\not\in\hat{B}$ such that $\hat{B}\cup\{e\}$ is independent in $M$, bi-orienting $e$ in $\mathcal{O}$ will cause the new continuous orientation $\mathcal{O}_e$ to no longer be $\sigma$-compatible. Then every such $\mathcal{O}_e$ is compatible with $-\sigma(C_e)$ for some circuit $C_e$ containing $e$. Pick, among all such elements $e$ and circuits $C_e$, the pair that maximizes $w\cdot{\bf v}_{\sigma(C_e)}$, where we always choose the normalized ${\bf v}_{\sigma(C_e)}$ whose $e$-th coordinate is $\sigma(C_e)(e)$. The circuit $C_e$ must contain another element $f\not\in\hat{B}$ such that $\hat{B}\cup\{f\}$ is independent in $M$, so there exists some circuit $C_f$ containing $f$ such that $\mathcal{O}_f$ is compatible with $-\sigma(C_f)$. The signs of $\sigma(C_e)$ and $\sigma(C_f)$ over $f$ are different, so we can choose a suitable positive multiple ${\bf v}_f$ of ${\bf v}_{\sigma(C_f)}$ such that the $f$-th coordinates of ${\bf v}_f$ and ${\bf v}_{-\sigma(C_e)}$ are equal.

By Lemma~\ref{lem:circuitdecomposition}, ${\bf v}_{-\sigma(C_e)}-{\bf v}_f$ can be written as a sum $\sum_{i=1}^k {\bf v}_{C_i}$ of signed circuits. Each such signed circuit $C_i$ that does not contain $e$ must be compatible with $\mathcal{O}$ (hence $\sigma$), while those signed circuits that contain $e$ would at least be compatible with $\mathcal{O}_e$. Since $w\cdot(\sum_{i=1}^k {\bf v}_{C_i})=w\cdot({\bf v}_{-\sigma(C_e)}-{\bf v}_f)<0$, some $C_i$ is not compatible with $\sigma$ (hence $\mathcal{O}$), thus they contain $e$. In particular, the sign of the $e$-th coordinate of ${\bf v}_{-\sigma(C_e)}-{\bf v}_f$ agrees with $-\sigma(C_e)$. But as the signs of $\sigma(C_e)(e)$ and $\sigma(C_f)(e)$ are different, the absolute value of the $e$-th coordinate of ${\bf v}_f$ is at most the absolute value of the $e$-th coordinate of ${\bf v}_{-\sigma(C_e)}$, which is 1.

Without loss of generality, the circuits containing $e$ are $C_1,C_2,\ldots,C_j$. We choose ${\bf v}_{C_i}$'s so that their $e$-th coordinates equal $-\sigma(C_e)(e)$, and rewrite ${\bf v}_{-\sigma(C_e)}-{\bf v}_f$ as $\sum_{i=1}^k \lambda_i {\bf v}_{C_i}$ for some $\lambda_i>0$. By comparing $e$-th coordinate, $\sum_{i=1}^j\lambda_i\leq 1$. Now we have 
$$
w\cdot(\sum_{i=1}^j \lambda_i {\bf v}_{C_i})=w\cdot \left({\bf v}_{-\sigma(C_e)}-{\bf v}_f-\sum_{i=j+1}^k \lambda_i {\bf v}_{C_i} \right)<-w\cdot{\bf v}_{\sigma(C_e)}<0,
$$
i.e., there exists some $C_i$ with $i\leq j$ that is compatible with $\mathcal{O}_e$, disagrees with $\sigma$, and $w\cdot{\bf v}_{\sigma(C_i)}>w\cdot{\bf v}_{\sigma(C_e)}$, contradicting our choice of $C_e$.

\medskip

For (2), $Z^\circ(B)$ can be identified, up to an affine linear transformation of full rank, with the open paralleletope $(0,1)^B$, where the $e$-th coordinate of $\psi({\mathcal O})\in Z^\circ(B)$ is the value $\widehat{{\mathcal O}(e)}$. Thus $Z(B)$ can be identified with the paralleletope $[0,1]^B$ in an analogous manner, and restricting to a face of $Z(B)$ of codimension $i$ can be described as orienting $i$ elements in $B$. This gives a combinatorial description of each face as the combinatorial type of any $\sigma$-compatible orientation in its relative interior, and conversely, every combinatorial type of $\sigma$-compatible orientation determines a unique face. Hence if the relative interiors of two faces intersect, then the two faces must be equal, showing that the collection of $Z(B)$'s gives a polyhedral subdivision of $Z_A$.
\end{proof}

\begin{remark}
We give a description of the incidence relation between cells in the polyhedral subdivision; we will not give a proof as we will not make use of it.
Let $B$ be a basis, and let $F_e$ be a facet of $Z(B)$ corresponding to orienting some $e\in B$.
Let $\mathcal{O}$ be a continuous orientation obtained from orienting $e$ in any continuous orientation of the form $\mathcal{O}(B,b)$.
Then either (1) $C^*(B,e)$ is a positive cocircuit in $\mathcal{O}$, in which case $F_e$ lies on the boundary of $Z_A$, or 
(2) there exists a unique element $f\in C^*(B,e)\setminus e$ such that the orientation obtained by reversing $f$ in $\mathcal{O}$ is also $\sigma$-compatible, in which case $F_e$ is a facet of $Z((B\setminus\{e\})\cup\{f\})$.
\end{remark}

\begin{figure}[ht!]
\begin{center}
    \includegraphics[width=15cm, height = 15cm, keepaspectratio]{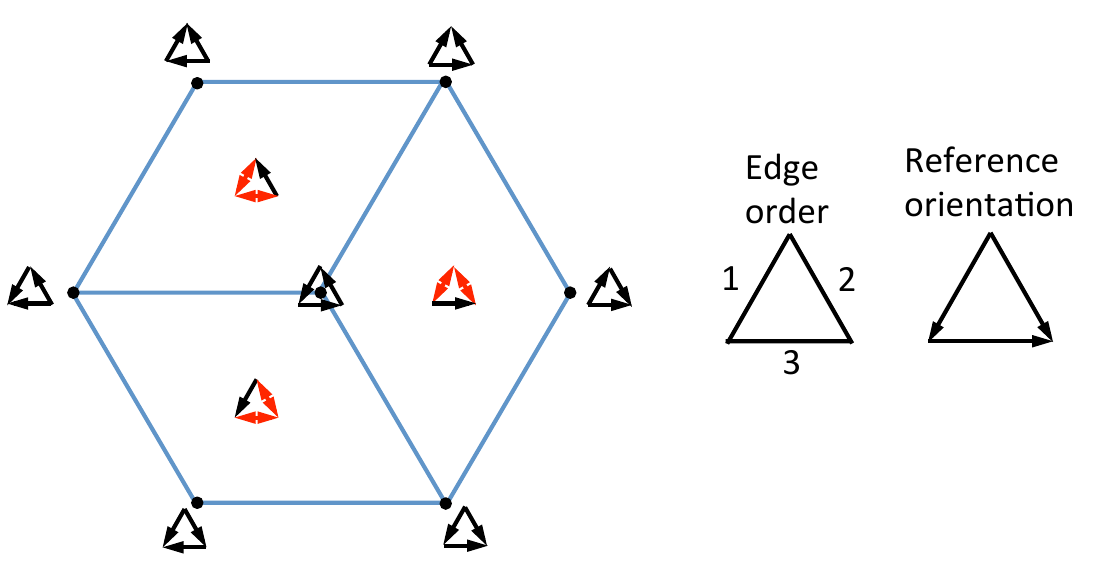}
\end{center}
  \caption{The subdivision of the zonotope associated to $K_3$ as described in Proposition \ref{prop:zonotopedecomp} using $\sigma$ induced by the total order and reference orientation on the right as described in Example \ref{edgeorder}.
  The red edges are bi-oriented.}
  \label{zonotopeK3_6}
\end{figure}

\subsection{Geometric interpretation of the combinatorial map}

Let $\sigma,\sigma^*$ be acyclic signatures of $C(M)$ and $C^*(M)$, respectively.  By Lemma~\ref{lem:gordan}, there exists $w \in {\mathbb R}^E$ that induces both $\sigma$ and $\sigma^*$.
Our next goal is to show that the combinatorially defined basis-to-orientation map $\hat{\beta}$ (whose definition depends on $\sigma$ and $\sigma^*$) can be interpreted geometrically as a ``shifting map''.

\medskip

To present the calculation in our proof more clearly, for the rest of \S\ref{sec:proof}, we will work in the {\em cocircuit space} $V^*(M)$ of $M$, which is the ${\mathbb R}$-span of $C^*(M)$ (and is equal to the row space of $A$). Let $\pi_{V^*(M)}$ be the orthogonal projection from $\RR^E$ onto $V^*(M)$ and let $\{u_e:e\in E\}$ be the standard basis for $\RR^E$. Consider the (row) zonotope $\widetilde{Z_A} := \{ \sum_{e\in E} c_e \pi_{V^*}(u_e) \; : \; 0 \leq c_e \leq 1 \}\subset V^*(M)$.
The following lemma shows that $\widetilde{Z_A}$ and the previously defined zonotope $Z_A$ are essentially equal:

\begin{lemma} \label{lem:RC_zonotope_isomorphic}
The map $L:v\mapsto Av$ is a lattice points preserving isomorphism between $V^*(M)$ and $\RR^r$ taking $\widetilde{Z_A}$ to $Z_A$.
\end{lemma}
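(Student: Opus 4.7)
The plan is to split the claim into two parts: (i) that $L$ restricted to $V^*(M)$ is a linear isomorphism onto $\RR^r$, and (ii) that $L$ sends the Minkowski generators of $\widetilde{Z_A}$ to the Minkowski generators of $Z_A$. Both parts reduce to elementary linear algebra, and I do not anticipate any serious obstacle; the only care needed is in keeping track of the fact that $\ker(A)$ and $V^*(M)=\row(A)$ are orthogonal complements inside $\RR^E$.

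For (i), I would first observe that both $V^*(M)$ and $\RR^r$ are $r$-dimensional, since $A$ has rank $r$ by assumption. It therefore suffices to prove injectivity of $L|_{V^*(M)}$. If $v\in V^*(M)$ satisfies $Av=0$, then $v\in \ker(A)\cap \row(A)$; since $\ker(A)$ and $\row(A)$ are orthogonal complements in $\RR^E$ under the standard inner product, we conclude $v=0$. Hence $L|_{V^*(M)}$ is a linear isomorphism onto $\RR^r$.

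For (ii), the key computation is to evaluate $L$ on each generator $\pi_{V^*}(u_e)$ of $\widetilde{Z_A}$. Using the orthogonal decomposition $u_e=\pi_{V^*}(u_e)+\pi_{\ker(A)}(u_e)$ inside $\RR^E$ and the fact that $A$ annihilates $\ker(A)$, I get
\[
L(\pi_{V^*}(u_e)) \;=\; A\,\pi_{V^*}(u_e) \;=\; A u_e - A\,\pi_{\ker(A)}(u_e) \;=\; A u_e,
\]
and $Au_e$ is just the $e$-th column of $A$, i.e.\ the $e$-th Minkowski generator of $Z_A$. By linearity of $L$ and the definition of the Minkowski sum,
\[
L(\widetilde{Z_A}) \;=\; \Bigl\{\sum_{e\in E} c_e\, L(\pi_{V^*}(u_e)) : 0\leq c_e\leq 1\Bigr\} \;=\; \Bigl\{\sum_{e\in E} c_e\, A u_e : 0\leq c_e\leq 1\Bigr\} \;=\; Z_A,
\]
which completes the proof. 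Combining (i) and (ii) gives the lemma.
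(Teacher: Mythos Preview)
Your proof is correct and follows essentially the same approach as the paper: establish that $L|_{V^*(M)}$ is a linear isomorphism onto $\RR^r$, then check that $L(\pi_{V^*}(u_e))=Au_e$ is the $e$-th column of $A$. The only cosmetic difference is that the paper computes $L(\pi_{V^*}(u_e))$ via the explicit projection formula $\pi_{V^*}(u_e)=A^T(AA^T)^{-1}Au_e$, whereas you use the orthogonal decomposition $u_e=\pi_{V^*}(u_e)+\pi_{\ker A}(u_e)$ directly; both yield the same conclusion.
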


\begin{proof}
Since $AA^T$ has full rank, $V^*(M)=\{A^Tz:z\in\RR^r\}$ is isomorphic to $\RR^r$ via $L$. By simple linear algebra, we have $L(\pi_{V^*}(u_e))=L(A^T(AA^T)^{-1}Au_e)=A_e$.  
Thus $L(\sum_{e\in E} c_e \pi_{V^*}(u_e))=\sum_{e\in E} c_e A_e$ and $L(\widetilde{Z_A})=Z_A$. $L$ preserves lattice points because $A$ is totally unimodular.
\end{proof}

In particular, the subdivision of $Z_A$ constructed in \S\ref{sec:polyhedralsubdivision} 
induces a corresponding subdivision of $\widetilde{Z_A}$.  We denote by $\widetilde{Z(B)}$ the cell $L^{-1}(Z(B))$ in $\widetilde{Z_A}$.

\medskip

The key to defining the shifting map is the following lemma:

\begin{lemma} \label{lem:phiwelldefined}
If $w'$ is the orthogonal projection of $w$ onto $V^*(M)$, then for all sufficiently small $\epsilon > 0$ the image of $\widetilde{Z(B)}$ under the map $v \mapsto v + \epsilon w'$ contains a unique point corresponding (via $\psi$) to a $\sigma$-compatible discrete orientation ${\mathcal O}_B$.
\end{lemma}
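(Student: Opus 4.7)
The plan is to reduce the lemma to a sign check on the coordinates of $w'$ in a natural basis of $V^*(M)$. By Theorem~\ref{thm:zonotopedecomp}(2), the points of $\widetilde{Z_A}$ that come from $\sigma$-compatible discrete orientations via $\psi$ are precisely the vertices of the zonotopal subdivision $\widetilde{\Sigma}$, and any such vertex not already lying in $\widetilde{Z(B)}$ has positive distance from $\widetilde{Z(B)}$. Hence for $\epsilon$ sufficiently small the only candidate points inside $\widetilde{Z(B)} + \epsilon w'$ are among the $2^r$ vertices of $\widetilde{Z(B)}$ itself, which are naturally indexed by subsets $B_+ \subseteq B$ (recording which elements of $B$ are oriented as $+1$ in the corresponding discrete orientation).

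Next I would parametrize the inclusion combinatorially. Since $B$ is a basis, $\{\pi_{V^*}(u_e)\}_{e \in B}$ is a basis of $V^*(M)$, so I may expand $w' = \sum_{e \in B} \beta_e\, \pi_{V^*}(u_e)$. A direct calculation (translating so that one vertex of $\widetilde{Z(B)}$ lies at the origin) shows that the vertex indexed by $B_+$ belongs to $\widetilde{Z(B)} + \epsilon w'$ if and only if $\mathbf{1}_{e \in B_+} - \epsilon \beta_e \in [0,1]$ for every $e \in B$. For $\epsilon > 0$ sufficiently small this collapses to the sign conditions $\beta_e \ge 0$ for $e \in B_+$ and $\beta_e \le 0$ for $e \in B \setminus B_+$; hence uniqueness will follow once one verifies $\beta_e \ne 0$ for every $e \in B$, in which case $B_+$ is forced to equal $\{e \in B : \beta_e > 0\}$.

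To show $\beta_e \ne 0$, I would compute the inner product $w' \cdot \sigma^*(C^*(B,e))$ in two different ways. On the one hand, since $\sigma^*(C^*(B,e)) \in V^*(M)$ and $w - w' \perp V^*(M)$, this equals $w \cdot \sigma^*(C^*(B,e))$, which is strictly positive because $w$ induces $\sigma^*$. On the other hand, using the identity $\langle \pi_{V^*}(u_f), v \rangle = v(f)$ valid for every $v \in V^*(M)$, together with the key fact that the fundamental cocircuit $C^*(B,e)$ meets $B$ only at $e$, the sum collapses to $\beta_e \cdot \sigma^*(C^*(B,e))(e)$. Since $\sigma^*(C^*(B,e))(e) \ne 0$, we conclude that $\beta_e$ has the same nonzero sign as $\sigma^*(C^*(B,e))(e)$.

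The main obstacle is simply reconciling the two smallness requirements on $\epsilon$ in the first two steps, which only become compatible after $\beta_e \ne 0$ has been established. Once this is done, the lemma follows; as a bonus, the distinguished vertex is labeled by $B_+ = \{e \in B : \sigma^*(C^*(B,e))(e) > 0\}$, so each $e \in B$ is oriented as $\sigma^*$ dictates through its fundamental cocircuit, and together with the $\sigma$-orientation on $E \setminus B$ that is already built into $Z(B)$ this recovers precisely the orientation $\mathcal{O}_B$ appearing in Theorem~\ref{thm:mainbijectionforregularmatroids}.
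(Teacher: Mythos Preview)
Your argument is correct and follows essentially the same approach as the paper: both reduce to showing that $w'$ is generic with respect to the facets of the parallelotope $\widetilde{Z(B)}$, and both verify this by pairing $w'$ with the fundamental cocircuit $C^*(B,e)$ and using that $w$ induces $\sigma^*$. The only difference is packaging: the paper phrases the non-degeneracy as ``$w'$ is not parallel to any facet'' and leaves the identification of the distinguished vertex to the subsequent Theorem~\ref{thm:phibetaequal}, whereas you work directly in coordinates $w'=\sum_{e\in B}\beta_e\,\pi_{V^*}(u_e)$, prove $\beta_e\neq 0$, and thereby already pin down the vertex (and hence the orientation $\mathcal{O}_B$) in one stroke.
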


\begin{proof} By Proposition \ref{prop:zonotopedecomp}, the vertices of each $\widetilde{Z(B)}$ correspond to $\sigma$-compatible discrete orientations. It therefore suffices to prove that $w'$ does not lie in the affine span of any facet of $\widetilde{Z(B)}$.
The affine span of a facet of $\widetilde{Z(B)}$ is spanned by directions of the form $\pi_{V^*}(e)$ for $e\in\hat{B}$ where $\hat{B}\subsetneq B$.
Since $|\hat{B}|<r$, there is a cocircuit $K$ of $M$ avoiding $\hat{B}$. Any direction $v := \sum_{e\in\hat{B}}\lambda_e\pi_{V^*}(e)$ in the span satisfies $\langle v, {\bf v}_{\sigma^*(K)}\rangle
=\sum_{e\in\hat{B}}\lambda_e\langle e,{\bf v}_{\sigma^*(K)}\rangle=0$. On the other hand, since $w$ induces $\sigma^*$, $\langle w', {\bf v}_{\sigma^*(K)}\rangle=\langle w,{\bf v}_{\sigma^*(K)}\rangle>0$.
\end{proof}

We define $\phi$ to be the map that takes a basis $B$ to the orientation ${\mathcal O}_B$ defined in Lemma \ref{lem:phiwelldefined}.

\begin{theorem} \label{thm:phibetaequal}
The map $\phi$ coincides with the previously defined map $\hat{\beta}$.
\end{theorem}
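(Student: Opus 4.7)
The plan is to identify the vertex of $\widetilde{Z(B)}$ selected by the shift $v\mapsto v+\epsilon w'$ via a tangent-cone computation, and then show that this vertex corresponds to the orientation $\hat{\beta}(B)$. Observe first that both $\phi(B)$ and $\hat{\beta}(B)$ automatically agree on each $e\notin B$: for $\hat{\beta}$ this is the definition, and for $\phi$ this is forced by the description of $Z(B)$ in Remark~\ref{rmk:facedescription}, since only elements of $B$ are bi-oriented in the cell. Hence it suffices to show agreement on $B$.

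Next I would describe the local structure of $\widetilde{Z(B)}$. Given an orientation $\mathcal{O}$ whose vertex $\tilde{v}_\mathcal{O}\in\widetilde{Z(B)}$ corresponds to a choice $\mathcal{O}|_B:B\to\{\pm 1\}$, the paralleletope $\widetilde{Z(B)}$ has edge directions $\pm\pi_{V^*}(u_e)$ for $e\in B$, so the tangent cone at $\tilde{v}_\mathcal{O}$ is $\mathrm{cone}(-\mathcal{O}(e)\pi_{V^*}(u_e):e\in B)$. By Lemma~\ref{lem:phiwelldefined}, $\phi(B)$ is the unique $\mathcal{O}|_B$ for which $-w'$ lies in this cone. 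Since the vectors $\{\pi_{V^*}(u_e):e\in B\}$ are linearly independent (any relation would place a nonzero lattice vector supported in $B$ inside $\ker(A)$, contradicting independence of $B$), they form a basis of $V^*(M)$ and admit a unique expansion $w'=\sum_{e\in B}\gamma_e\pi_{V^*}(u_e)$; the tangent cone condition then reads $\mathcal{O}(e)=\sign(\gamma_e)$ for every $e\in B$.

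The key calculation is to extract $\sign(\gamma_e)$ by pairing with the signed fundamental cocircuit $\sigma^*(C^*(B,e))$. Since $C^*(B,e)\cap B=\{e\}$ and $\pi_{V^*}$ is self-adjoint with $\sigma^*(C^*(B,e))\in V^*(M)$, one obtains the biorthogonality relation $\sigma^*(C^*(B,e))\cdot\pi_{V^*}(u_f)=\delta_{e,f}\,\sigma^*(C^*(B,e))(e)$ for $f\in B$; combined with $w-w'\in V^*(M)^\perp$, this yields
\[
\gamma_e\cdot\sigma^*(C^*(B,e))(e)=\sigma^*(C^*(B,e))\cdot w'=\sigma^*(C^*(B,e))\cdot w>0,
\]
where the last inequality is the acyclicity of $\sigma^*$ as induced by $w$. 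Therefore $\sign(\gamma_e)=\sigma^*(C^*(B,e))(e)=\hat{\beta}(B)(e)$, which matches $\phi(B)(e)$ and completes the identification. The main obstacle is setting up the biorthogonality identity cleanly so that $\gamma_e$ is isolated by a single pairing against $w$; once this is in place, acyclicity immediately pins down the sign of each $\gamma_e$ and forces the two orientations to coincide.
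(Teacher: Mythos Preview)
Your proof is correct and follows essentially the same approach as the paper's own argument. Both proofs expand $w'$ in the basis $\{\pi_{V^*}(u_e):e\in B\}$ of $V^*(M)$ and then isolate each coefficient by pairing with the fundamental cocircuit $\sigma^*(C^*(B,e))$, using the fact that $B\cap C^*(B,e)=\{e\}$ together with the self-adjointness of $\pi_{V^*}$ and the defining inequality $w\cdot\sigma^*(C^*(B,e))>0$; your tangent-cone phrasing and the paper's ``which vertex lands inside the shifted cell'' phrasing are just two ways of saying $\mathrm{sign}(\gamma_e)$ determines $\phi(B)(e)$. One small remark: for the agreement on $e\notin B$, the Remark you cite only records that non-basis elements are not bi-oriented in $Z(B)$; the fact that they are oriented according to $\sigma(C(B,e))$ comes from the construction of $CO^\circ(B)$ via Proposition~\ref{prop:uniqueext}, which is what the paper invokes explicitly.
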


\begin{proof}

Let $B$ be a basis. Then $\phi(B)$ can be obtained by orienting each (bi-oriented) basis element from a continuous $\sigma$-compatible orientation in the interior of $\widetilde{Z(B)}$ (which is of the form $\mathcal{O}(B,b)$), so by the greedy procedure described in Proposition \ref{prop:uniqueext}, the elements outside $B$ are oriented according to their fundamental circuits, hence $\phi(B)$ agrees with $\hat{\beta}(B)$ outside $B$.

For elements inside $B$, we work with the basis $\{\pi_{V^*}(u_e):e\in B\}$ for $V^*(M)$ and write $w'=\sum_{e\in B}w_e\pi_{V^*}(u_e)$. 
Identifying $\widetilde{Z(B)}$ with $[0,1]^B$ and the vertices of $\widetilde{Z(B)}$ with $\{0,1\}^B$. If a vertex $v$ is identified with $(s_e:e\in B)$, then it corresponds to a $\sigma$-compatible discrete orientation where each element $e\in B$ is oriented in agreement with (resp. opposite to) its reference orientation when $s_e=1$ (resp. $s_e=0$). The cell $\widetilde{Z(B)}$ will contain $v$ after shifting if and only if the sign pattern of the $s_e$'s agrees with the sign pattern of the $w_e$'s, i.e., if and only if $s_e=1$ precisely when $w_e>0$.

Let $f \in B$, and let $K$ be the fundamental cocircuit of $f$ with respect to $B$. By a calculation similar to the above, 
\[
0<\langle w',{\bf v}_{\sigma^*(K)}\rangle=\sum_{e\in B}w_e\langle u_e, {\bf v}_{\sigma^*(K)}\rangle=w_f\langle u_f, {\bf v}_{\sigma^*(K)}\rangle,
\]
as $f$ is the unique element in $B\cap K$. If $w_f>0$, then $\langle u_f, {\bf v}_{\sigma^*(K)}\rangle>0$ and the reference orientation of $f$ agrees with $\sigma^*(K)$, i.e., the orientation of $f$ in $\phi(B)$ is the same as the reference orientation of $f$. From the last paragraph, $f$ is oriented according to its reference orientation in $\hat{\beta}(B)$ as well, because $w_f>0$. A similar analysis in the case where $w_f<0$ implies also that $\phi(B)(f)=\hat{\beta}(B)(f)$.
\end{proof}

\begin{proposition}\label{prop:betaCCM}
Let $B$ be a basis. Then $\hat{\beta}(B)$ is $(\sigma,\sigma^*)$-compatible.
\end{proposition}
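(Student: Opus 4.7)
The claim decomposes into $\sigma$-compatibility and $\sigma^*$-compatibility of $\hat{\beta}(B)$. The first is essentially immediate from the groundwork already laid, while for the second I would invoke matroid duality.

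For $\sigma$-compatibility: by Theorem~\ref{thm:phibetaequal} we have $\hat{\beta}(B) = \phi(B) = \mathcal{O}_B$, and by construction in Lemma~\ref{lem:phiwelldefined} the orientation $\mathcal{O}_B$ is read off as the discrete orientation corresponding (via $\psi$) to the unique vertex of $\widetilde{Z(B)}$ selected by the generic shift $\epsilon w'$. Since Theorem~\ref{thm:zonotopedecomp}(2) identifies every vertex of every cell of the subdivision with a $\sigma$-compatible discrete orientation, $\mathcal{O}_B$ is $\sigma$-compatible.

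For $\sigma^*$-compatibility I would pass to the dual matroid $M^*$, which is also representable over $\mathbb{R}$. Under duality bases are complemented, and circuits and cocircuits are swapped; in particular, for $f \in B$ the fundamental cocircuit $Z^*(f,B)$ of $M$ coincides with the fundamental circuit of $f$ relative to the basis $B^\vee := E \setminus B$ of $M^*$, and symmetrically, for $e \notin B$ the fundamental circuit $Z(e,B)$ of $M$ coincides with the fundamental cocircuit of $e$ relative to $B^\vee$ in $M^*$. An acyclic cocircuit signature $\sigma^*$ of $M$ is precisely an acyclic circuit signature of $M^*$, and vice versa. Comparing the defining recipe of $\hat{\beta}$ term by term yields an identity of orientations
\[
\hat{\beta}_{M^*}(B^\vee) \;=\; \hat{\beta}_M(B),
\]
where $\hat{\beta}_{M^*}$ is built from the signature pair $(\sigma^*, \sigma)$ acting on $M^*$. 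Applying the $\sigma$-compatibility statement just proved in the setting of $M^*$ then says that $\hat{\beta}_{M^*}(B^\vee)$ is compatible with the circuit signature $\sigma^*$ of $M^*$; unwinding the duality, this is exactly $\sigma^*$-compatibility of $\hat{\beta}_M(B)$ in $M$.

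The main step to watch is the identification $\hat{\beta}_{M^*}(B^\vee) = \hat{\beta}_M(B)$: one must track the fundamental circuit/cocircuit correspondence under $M \leftrightarrow M^*$ carefully to confirm that swapping the roles of $(\sigma,\sigma^*)$ really matches the symmetry implicit in the definition of $\hat{\beta}$. Once this bookkeeping is verified, the $\sigma^*$-compatibility assertion is reduced to the $\sigma$-compatibility of $\hat{\beta}$ applied to the dual matroid, which is automatic from the geometric construction used in the first half.
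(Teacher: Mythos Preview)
Your proof is correct and follows essentially the same approach as the paper: first obtain $\sigma$-compatibility from $\hat{\beta}(B)=\phi(B)$ via Theorem~\ref{thm:phibetaequal} and the construction in Lemma~\ref{lem:phiwelldefined}/Theorem~\ref{thm:zonotopedecomp}(2), then deduce $\sigma^*$-compatibility by duality. The paper compresses the second step into the phrase ``the procedure \ldots\ is symmetric with respect to circuits and cocircuits, a dual argument shows \ldots'', whereas you have spelled out explicitly the identity $\hat{\beta}_{M^*}(B^\vee)=\hat{\beta}_M(B)$ and the dictionary between fundamental (co)circuits under $M\leftrightarrow M^*$; this extra bookkeeping is exactly what the paper's one-line appeal to duality is abbreviating.
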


\begin{proof} Since $\phi(B)$ is $\sigma$-compatible, $\hat{\beta}(B)$ is also $\sigma$-compatible by Theorem \ref{thm:phibetaequal}.  And since the procedure described in Theorem \ref{thm:mainbijectionforregularmatroids} is symmetric with respect to circuits and cocircuits, a dual argument shows that $\hat{\beta}(B)$ is $\sigma^*$-compatible.
\end{proof}

\begin{theorem}[Theorem~\ref{thm:realizablemainthm}] \label{thm:betabijective}
The map $\hat{\beta} : {\mathcal B}(M) \to {\mathcal X}(M)$ is a bijection.
\end{theorem}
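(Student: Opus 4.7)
The plan is to leverage the identification $\hat{\beta} = \phi$ established in Theorem~\ref{thm:phibetaequal} and show that the geometric map $\phi\colon \mathcal{B}(M) \to \mathcal{X}(M;\sigma,\sigma^*)$ is a bijection. Proposition~\ref{prop:betaCCM} already guarantees that the image is contained in $\mathcal{X}(M;\sigma,\sigma^*)$, so only injectivity and surjectivity remain to be verified.

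For injectivity, suppose $\phi(B_1) = \phi(B_2) = V$ with $B_1 \neq B_2$. Then $V$ is a vertex of both $\widetilde{Z(B_1)}$ and $\widetilde{Z(B_2)}$, and $-w'$ lies in the tangent cones of both cells at $V$. These tangent cones intersect in the tangent cone at $V$ of the common face of the two cells, which, since $B_1 \neq B_2$, is a proper face of $\widetilde{Z(B_1)}$; hence $w'$ lies in the linear span of this face, and so in the linear span of some facet of $\widetilde{Z(B_1)}$ translated to the origin. This contradicts Lemma~\ref{lem:phiwelldefined}.

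For surjectivity, given $\mathcal{O} \in \mathcal{X}(M;\sigma,\sigma^*)$ and $V := \psi(\mathcal{O})$, it suffices to show $V - \epsilon w' \in \widetilde{Z_A}$ for all sufficiently small $\epsilon > 0$; once this is done, $V - \epsilon w'$ lies in some cell $\widetilde{Z(B)}$, and the uniqueness in Lemma~\ref{lem:phiwelldefined} forces $V = \phi(B)$. The containment fails precisely when $-w'$ points outward across some facet of $\widetilde{Z_A}$ containing $V$. To analyze this I would invoke the standard zonotope--cocircuit correspondence: since the projected generators $\{\pi_{V^*(M)}(u_e) : e \in E\}$ realize an oriented matroid isomorphic to $M$, the facets of $\widetilde{Z_A}$ are indexed by signed cocircuits $C^*$ of $M$, with outer normal in the direction $C^*$ itself, and $V = \psi(\mathcal{O})$ lies on the facet associated to $C^*$ if and only if $\mathcal{O}$ is compatible with $C^*$. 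Thus failure of the containment amounts to the existence of a signed cocircuit $C^*$ compatible with $\mathcal{O}$ satisfying $w' \cdot C^* < 0$. Since $C^* \in V^*(M)$, we have $w' \cdot C^* = w \cdot C^*$, and by the choice of $w$ inducing $\sigma^*$ this inner product is positive exactly when $C^*$ agrees with $\sigma^*$. Hence the containment holds if and only if $\mathcal{O}$ is $\sigma^*$-compatible, which is precisely our hypothesis.

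The main obstacle is the facet description of $\widetilde{Z_A}$ used in the surjectivity argument. Although the correspondence between facets of a zonotope and signed cocircuits of its associated oriented matroid is classical, verifying it carefully in the present setup---in particular identifying the oriented matroid of the projected generators $\pi_{V^*(M)}(u_e)$ with $M$ itself, and pinning down the geometric criterion for $\psi(\mathcal{O})$ to lie on the facet corresponding to a given signed cocircuit---requires some additional care.
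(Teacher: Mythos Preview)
Your proof is correct and follows the same overall architecture as the paper's: identify $\hat\beta$ with the shifting map $\phi$ via Theorem~\ref{thm:phibetaequal}, then argue injectivity and surjectivity geometrically. Your injectivity argument is a more detailed version of the paper's one-line observation that a point can lie in the interior of at most one cell of the subdivision after shifting.

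The surjectivity arguments genuinely differ in presentation. The paper normalizes so that $\mathcal{O}\equiv\mathbf{1}$ and rewrites the containment $V-\epsilon w'\in\widetilde{Z_A}$ as feasibility of the linear program $\{A\mathbf{f}=Aw,\ \mathbf{f}\ge 0\}$; it then invokes Farkas' lemma, observing that the infeasibility certificate ``$\exists\, z:\ z^TA\ge 0,\ z^T(Aw)<0$'' is ruled out precisely by $\sigma^*$-compatibility of $\mathcal{O}$. You instead appeal to the classical cocircuit--facet correspondence for zonotopes: the facets of $\widetilde{Z_A}$ through $V$ are indexed by the signed cocircuits compatible with $\mathcal{O}$, with outer normal $C^*$ itself, so $-w'$ points inward across each of them exactly when $\langle w,C^*\rangle>0$, i.e.\ when $\mathcal{O}$ is $\sigma^*$-compatible. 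The two arguments are dual to one another---your facet description is essentially the geometric content of the Farkas alternative---but the paper's LP formulation has the side benefit of feeding directly into the inverse algorithm of \S\ref{sec:bijectioncomputable}. The ``obstacle'' you flag is mild: since $\ker(\pi_{V^*})=V(M)$, the projected generators $\pi_{V^*}(u_e)$ realize the same oriented matroid as the columns of $A$, and the facet-normal computation $\langle n,\pi_{V^*}(u_e)\rangle=n_e$ for $n\in V^*(M)$ is immediate from self-adjointness of the projection.
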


\begin{proof} $\hat{\beta}=\phi$ is injective for the simple geometric reason that a vertex can only be contained in the interior of at most one cell $\widetilde{Z(B)}$ after shifting. To prove the surjectivity part, we need to show that for every $(\sigma,\sigma^*)$-compatible orientation $\mathcal{O}$, there exists a continuous orientation $\mathcal{O}'$ such that the displacement from $\mathcal{O}'$ to $\mathcal{O}$, interpreted as points of $\widetilde{Z_A}$, is $\pi_{V^*}(w)$ (here we assume $w$ is sufficiently short). For simplicity, we negate suitable columns of $A$ in order to assume without loss of generality that $\mathcal{O}\equiv {\bf 1}$, and we modify $w$ accordingly. For such to be determined $\mathcal{O}'$, denote by ${\bf f}_e\geq 0$ the difference between $\widehat{\mathcal{O}(e)}=1$ and $\widehat{\mathcal{O}'(e)}$. By simple linear algebra, our condition on $\mathcal{O}'$ in terms of displacement becomes $A{\bf f}=Aw$, hence $\mathcal{O}'$ exists if and only if the linear program
\begin{equation}\label{inverse_LP}
\min\{{\bf 1}^T{\bf f}:A{\bf f}=Aw, {\bf f}\geq {\bf 0}\}
\end{equation}
is feasible. But the $\sigma^*$-compatible condition implies ``if $z^TA\geq 0$, then $(z^TA)w\geq 0$'', which is the same as ``there exists no $z$ such that $z^TA\geq 0, z^T(Aw)<0$'', by the Farkas lemma, the latter condition is equivalent to the existence of some ${\bf f}\geq 0$ such that $A{\bf f}=Aw$.
\end{proof}

\begin{figure}[ht!]
\begin{center}
    \includegraphics[width=1\textwidth]{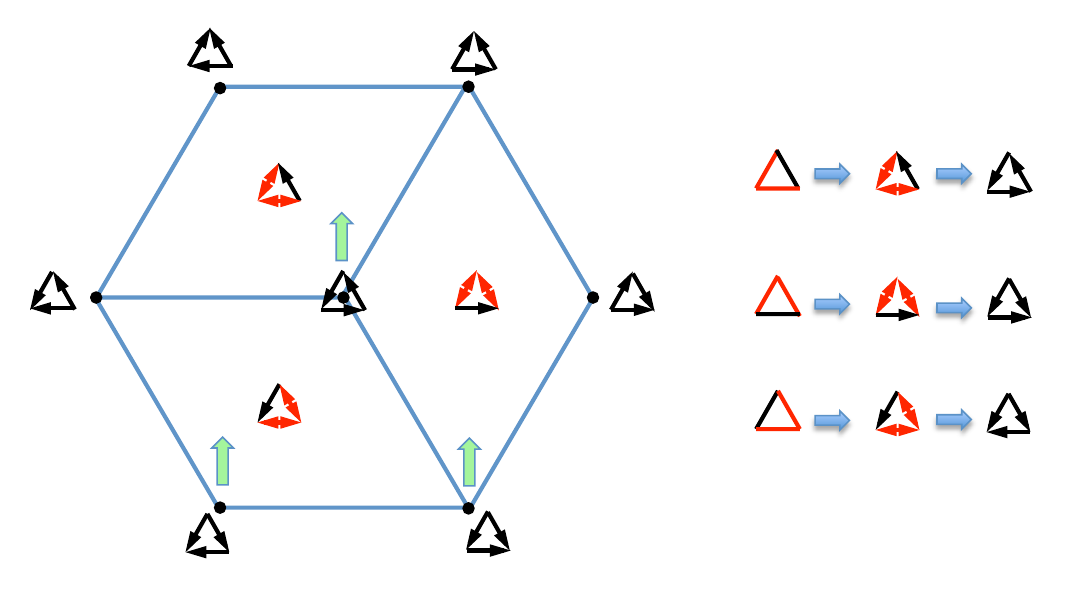}
\end{center}
  \caption{An example of the bijection for $K_3$ using the pair $(\sigma, \sigma^*)$ induced by the total order and reference orientation from Figure \ref{zonotopeK3_6}.}
\end{figure}

\subsection{Computability of the inverse map} \label{sec:bijectioncomputable}

We now describe an inverse algorithm which furnishes an inverse to the map $\phi$, and hence to $\hat{\beta}$.
Again we assume the inputted $(\sigma,\sigma^*)$-compatible discrete orientation $\mathcal{O}$ is equal to ${\bf 1}$ for simplicity.
Suppose $\mathcal{O}$ was shifted into the cell $\widetilde{Z(B)}$ after moving by a displacement of $-\pi_{V^*}(w)$. By solving the linear program (\ref{inverse_LP}) we obtain a continuous orientation $\mathcal{O}'$ (resp. ${\bf f}$) in the cell $Z(B)$.
Therefore it remains to find the $\sigma$-compatible continuous orientation $\mathcal{O}''$ equivalent to $\mathcal{O}'$, and the desired basis $B$ will then be the set of bi-oriented elements in $\mathcal{O}''$.

To do so, we solve the linear program 
\begin{equation}\label{maxflow_LP}
\max\{w^T{\bf y}: A{\bf y}=0, {\bf f}_e-1\leq {\bf y}_e\leq {\bf f}_e,\forall e\}.
\end{equation}
Let $\tilde{\bf y}$ be an optimal solution.
Consider the continuous orientation $\mathcal{O}'':=\mathcal{O}'+2\tilde{\bf y}$, we claim this is the continuous orientation we are looking for. 
The conditions in the linear program guarantee that $\mathcal{O}''$ is a valid continuous orientation circuit reversal equivalent to $\mathcal{O}''$, and it is $\sigma$-compatible: indeed, if $\mathcal{O}''$ is compatible with some $-\sigma(C)$, then one can easily check that $\tilde{\bf y}+\delta {\bf v}_{\sigma(C)}$ is also a feasible solution for sufficiently small $\delta>0$, contradicting the optimality of $\tilde{\bf y}$.

Since linear programming admits a polynomial-time algorithm \cite{schrijver1986LP}, the linear program (\ref{maxflow_LP}), together with the dual version of it, imply the following:

\begin{proposition} \label{prop:CCMalgo}
There is a polynomial-time algorithm to compute the unique $(\sigma,\sigma^*)$-compatible continuous orientation circuit-cocircuit equivalent to a given continuous orientation.
\end{proposition}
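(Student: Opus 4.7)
The plan is to combine the linear program (\ref{maxflow_LP}) with its cocircuit analog into a single LP whose optimal solution yields the desired orientation. First I would introduce the cocircuit analog of (\ref{maxflow_LP}) by replacing the constraint $A\mathbf{y}=0$ (which forces $\mathbf{y}$ to be a continuous circuit-reversal direction, i.e., to lie in $\ker(A)$) with $\mathbf{y}\in\mathrm{row}(A)$ (a continuous cocircuit-reversal direction), and by swapping $\sigma$ for $\sigma^*$ in the acyclicity argument. By applying the reasoning already given for (\ref{maxflow_LP}) to the dual matroid $M^*$, this analog LP produces the $\sigma^*$-compatible continuous orientation in the cocircuit-reversal class of its input.

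The combined LP I would solve is
\[
\max\Bigl\{w^{\mathsf T}(\mathbf{y}_1+\mathbf{y}_2):A\mathbf{y}_1=0,\ \mathbf{y}_2=A^{\mathsf T}\mathbf{z}\text{ for some }\mathbf{z},\ \mathbf{f}_e-1\le(\mathbf{y}_1+\mathbf{y}_2)_e\le \mathbf{f}_e\ \forall e\Bigr\},
\]
where $\mathbf{f}_e=(1-\mathcal{O}(e))/2$ encodes the input, and the output is $\mathcal{O}'=\mathcal{O}+2(\mathbf{y}_1^*+\mathbf{y}_2^*)$ at an optimum $(\mathbf{y}_1^*,\mathbf{y}_2^*)$. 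To verify $(\sigma,\sigma^*)$-compatibility: if $\mathcal{O}'$ were compatible with some $-\sigma(C)$ for a signed circuit $C$, then perturbing $\mathbf{y}_1^*$ by $+\delta\sigma(C)$ for small $\delta>0$ would preserve all constraints (since $\sigma(C)\in\ker(A)$ and compatibility guarantees no relevant coordinate is saturated) while increasing the objective by $\delta\, w^{\mathsf T}\sigma(C)>0$, contradicting optimality. A symmetric perturbation of $\mathbf{y}_2^*$ by $+\delta\sigma^*(C^*)\in\mathrm{row}(A)$ rules out compatibility with any $-\sigma^*(C^*)$.

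To verify that $\mathcal{O}'$ lies in the continuous circuit-cocircuit class of $\mathcal{O}$, I would apply Lemma~\ref{lem:circuitdecomposition} to $\mathbf{y}_1^*$ (expressing it as a non-negative sum of signed circuits compatible with the relevant orientations) and the dual analog to $\mathbf{y}_2^*$ (for signed cocircuits), then realize the resulting decomposition as an explicit sequence of continuous circuit and cocircuit reversals connecting $\mathcal{O}$ to $\mathcal{O}'$. The hard part will be coordinating these two decompositions consistently, so that each intermediate orientation remains in $[-1,1]^E$ and each individual reversal is compatible with its current orientation; this mirrors the greedy argument used in the proofs of Proposition~\ref{prop:ctslatticepointprop} and Theorem~\ref{thm:zonotopedecomp}, and can be handled by processing moves in an order where coordinates that will eventually saturate are updated last.

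Finally, polynomial-time computability follows immediately from the polynomial-time solvability of linear programming \cite{schrijver1986LP}: the combined LP has input size polynomial in that of $A$, $w$, and $\mathcal{O}$, and (as noted in the remark following (\ref{maxflow_LP})) the bit complexity of the optimum is polynomially bounded, so the output orientation $\mathcal{O}'$ can be read off in polynomial time. The dual LP mentioned in the hint enters here as well: writing down the LP dual of the combined program and applying complementary slackness gives a direct certificate of $(\sigma,\sigma^*)$-compatibility that can be checked in the same polynomial bound, yielding Proposition~\ref{prop:CCMalgo}.
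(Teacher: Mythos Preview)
Your approach is close in spirit to the paper's one-line justification (apply the LP~(\ref{maxflow_LP}) and its matroid-dual version), but there is a genuine gap in the ``hard part'' you flag, and it stems from an observation you appear to have missed. Since $\ker(A)$ and $\row(A)$ are orthogonal complements in $\RR^E$, the pair $(\mathbf{y}_1,\mathbf{y}_2)$ in your combined LP ranges over all of $\RR^E$ via $\mathbf{y}_1+\mathbf{y}_2$; your LP is therefore equivalent to the unconstrained box problem $\max\{w^{\mathsf T}\mathbf{v}:\mathbf{f}_e-1\le\mathbf{v}_e\le\mathbf{f}_e\}$, whose optimum is always $\mathcal{O}'=\sign(w)$ regardless of the input $\mathcal{O}$. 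Consequently your optimal $(\mathbf{y}_1^*,\mathbf{y}_2^*)$ is simply the orthogonal decomposition of $\tfrac{1}{2}(\sign(w)-\mathcal{O})$, and there is no reason whatsoever for the intermediate point $\mathcal{O}+2\mathbf{y}_1^*$ to lie in $[-1,1]^E$. Your proposed ``greedy ordering'' of the circuit and cocircuit moves cannot repair this: once the intermediate point leaves the cube, Lemma~\ref{lem:circuitdecomposition} no longer furnishes reversals compatible with a valid continuous orientation, and the argument borrowed from Proposition~\ref{prop:ctslatticepointprop} breaks down.

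The paper sidesteps this by applying the two LPs \emph{sequentially} rather than simultaneously: first solve~(\ref{maxflow_LP}) to land at the $\sigma$-compatible $\mathcal{O}_1$ in the circuit-reversal class of $\mathcal{O}$, then solve the cocircuit analogue starting from $\mathcal{O}_1$. Each step is a genuine move within $[-1,1]^E$, so circuit-cocircuit equivalence to $\mathcal{O}$ is automatic and no coordination of decompositions is required. What your combined LP would buy---obtaining $(\sigma,\sigma^*)$-compatibility in one shot via the perturbation argument---is precisely what the sequential approach leaves as a separate check; conversely, what the sequential approach gets for free (remaining inside the cube throughout) is precisely where your argument has its gap.
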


Summarizing the discussion, we have the following theorem:

\begin{theorem} \label{prop:invalgo}
There is a polynomial-time algorithm to compute the inverse of $\hat{\beta}$.
\end{theorem}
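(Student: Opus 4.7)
The plan is to observe that the two linear programs displayed immediately before the theorem statement, namely (\ref{inverse_LP}) and (\ref{maxflow_LP}), constitute the desired inverse algorithm, and then to verify both its correctness and its polynomial running time. Concretely, given a $(\sigma,\sigma^*)$-compatible discrete orientation $\mathcal{O}$, I would first reduce to the normalization $\mathcal{O}\equiv \mathbf{1}$ by negating the appropriate columns of $A$ (which is a polynomial-time preprocessing step) and correspondingly modifying $w$. Then I would run (\ref{inverse_LP}) to obtain $\mathbf{f}$, feed the resulting continuous orientation $\mathcal{O}'$ into (\ref{maxflow_LP}), extract $\tilde{\mathbf{y}}$, form $\mathcal{O}''$, and output the set of bi-oriented elements as the basis $B$.

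For correctness, I would argue as follows. By the discussion preceding the theorem, $\mathcal{O}'$ lies in the cell $Z(B)$ corresponding to the basis we are trying to recover, and $\mathcal{O}''$ is the unique $\sigma$-compatible continuous orientation circuit-equivalent to $\mathcal{O}'$ (Theorem~\ref{thm:continuouscompatible}). By Proposition~\ref{prop:uniqueext}(1), the bi-oriented elements of $\mathcal{O}''$ form an independent set; by Theorem~\ref{thm:zonotopedecomp}, they in fact form a basis, and by construction this basis $B$ satisfies $\phi(B)=\mathcal{O}$. Finally, Theorem~\ref{thm:phibetaequal} identifies $\phi$ with $\hat{\beta}$, so the basis returned is indeed $\hat{\beta}^{-1}(\mathcal{O})$.

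For the complexity analysis, I would invoke the classical polynomial-time solvability of linear programming \cite{schrijver1986LP}, applied separately to (\ref{inverse_LP}) and (\ref{maxflow_LP}). The remark after (\ref{maxflow_LP}) already notes that the bit complexity of $\mathbf{f}$ is polynomial in the input size, so feeding $\mathbf{f}$ into (\ref{maxflow_LP}) preserves polynomial bit-size. When $A$ is totally unimodular (the case of regular matroids), $\mathbf{f}$ and $\tilde{\mathbf{y}}$ can be taken to have bit complexity bounded in terms of $A$ and $w$ only, and reading off the set of bi-oriented elements of $\mathcal{O}''$ is a linear-time scan over the coordinates.

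The only potentially subtle point is ensuring that the two LPs are actually feasible and that the algorithm is well-defined on every input $\mathcal{O}\in\mathcal{X}(M;\sigma,\sigma^*)$. Feasibility of (\ref{inverse_LP}) was established in the proof of Theorem~\ref{thm:betabijective} via Farkas, using precisely the $\sigma^*$-compatibility of $\mathcal{O}$, and feasibility of (\ref{maxflow_LP}) is immediate from $\mathbf{y}=\mathbf{0}$. So no additional work is required beyond packaging the linear-programming subroutines and citing the results already proved. I do not expect any serious obstacle; the theorem is essentially a summary of the constructive content of the preceding subsection.
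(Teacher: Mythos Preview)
Your proposal is correct and follows essentially the same approach as the paper: the theorem is stated there as a summary of the preceding discussion in \S\ref{sec:bijectioncomputable}, and you have accurately recapitulated that discussion (solve (\ref{inverse_LP}), then (\ref{maxflow_LP}), read off the bi-oriented set) together with the supporting results (Theorems~\ref{thm:continuouscompatible}, \ref{thm:phibetaequal}, \ref{thm:betabijective} and the LP complexity remark). If anything, your correctness justification is slightly more explicit than the paper's; the only point to sharpen is that the bi-oriented set of $\mathcal{O}''$ equals $B$ not merely by Proposition~\ref{prop:uniqueext}(1) and Theorem~\ref{thm:zonotopedecomp}, but because the shifted point lies in the \emph{interior} $\widetilde{Z^\circ(B)}$ (from the construction of $\phi$), forcing $\mathcal{O}''\in CO^\circ(B)$.
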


\section{The discrete circuit-cocircuit reversal system for a regular matroid and its Jacobian}\label{discretestuffforregularmatroids}

We now return to the setting of regular matroids.  
Throughout this section, $M$ will denote a regular matroid on $E$ and $A$ will be a totally unimodular matrix representing $M$.
We will investigate the (original) discrete version of circuit(-cocircuit) reversal system which was introduced by Gioan \cite{gioan2007enumerating,gioan2008circuit}, and show that the $\sigma$-compatible discrete orientations also give distinguished representatives for this system. Moreover, we will show that discrete circuit-reversal classes correspond to lattice points of the zonotope $Z_A$ (which by Proposition \ref{prop:zonotopedecomp} are precisely the vertices of the zonotopal subdivision $\Sigma$).  Finally, we show that the discrete circuit-cocircuit reversal system is canonically a torsor for $\Jac(M)$.

\subsection{The discrete circuit-cocircuit reversal system}\label{dccrs}

For totally unimodular matrices, we have the following integral version of Lemma~\ref{lem:circuitdecomposition}:

\begin{lemma} \label{lem:regularcircuitdecomposition}
Let $u\in\Lambda_A(M)$. Then $u$ can be written as an integral sum of signed circuits (as elements of $\Lambda_A(M)$) $\sum \lambda_C C$ with $\lambda_C>0$, such that each $C$ is conformal to $u$. In particular, if $u$ is a $\{ 0, \pm 1 \}$-vector, then the $\lambda_C$'s are 1 and the $C$'s are disjoint.
\end{lemma}

\begin{proof} Without loss of generality, we may assume $u\geq 0$. We first pick a signed circuit $C$ conformal to $u$ as in the statement of Lemma~\ref{lem:circuitdecomposition}. By total unimodularity, ${\bf v}_C$ can be chosen as a $\{0,1\}$-vector, and we choose $\lambda_C$ to be the maximum number such that $u-\lambda_C C\geq 0$. In such case $\lambda_C$ must be an integer and the support of $u-\lambda_C C\in\Lambda_A(M)$ is strictly contained in the support of $C$. Proceed by induction to obtain the desired decomposition. The second assertion follows easily from the first.
\end{proof}

A {\em discrete orientation} ${\mathcal O}$ of $M$ is a function $E \to \{-1,1\}$.
A discrete orientation ${\mathcal O}$ is {\em compatible} with a signed circuit $C$ of $M$ if ${\mathcal O}(e) \neq -C(e)$ for all $e$ in the support of $C$.

If $\mathcal{O}$ is a discrete orientation and $C$ is a signed circuit compatible with $\mathcal{O}$, we can perform a (discrete) {\em circuit reversal} taking $\mathcal{O}$ to the orientation $\mathcal{O}'$ defined by $\mathcal{O}'(e) = -\mathcal{O}(e)$ if $e$ is in the support of $C$ and $\mathcal{O}'(e) = \mathcal{O}(e)$ otherwise.
The {\em discrete circuit reversal system} is the equivalence relation on the set $CO(M)$ of all discrete orientations of $M$ generated by all possible discrete circuit reversals.
We can make the same definitions for cocircuits by replacing $M$ with its dual.

\medskip

We first state a basic fact about orientations (which is true more general for oriented matroids)\cite[Corollary 3.4.6]{bjorner1999oriented}:

\begin{proposition} \label{prop:orientdecomp}
Given an orientation $\mathcal{O}$ of $M$ and $e \in E$, exactly one of the following holds:
\begin{enumerate}
\item There is a signed circuit $C$ of $M$ with $e \in {\rm supp}(C)$ such that ${\mathcal O}(f) = C(f)$ for every $f$ in the support of $C$.  In this case we say that $e$ belongs to the {\em circuit part} of $\mathcal{O}$.
\item There is a signed cocircuit $C^*$ of $M$ with $e \in {\rm supp}(C^*)$ such that ${\mathcal O}(f) = C^*(f)$ for every $f$ in the support of $C^*$. In this case we say that $e$ belongs to the {\em cocircuit part} of $\mathcal{O}$.
\end{enumerate}
\end{proposition}

\begin{proposition} \label{prop:latticepointprop}
The map $\psi$ from \S\ref{sec:continuousreversals} induces a bijection between discrete orientations of $M$ modulo discrete circuit reversals and lattice points of $Z_A$.
\end{proposition}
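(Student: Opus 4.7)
The plan is to leverage the continuous version of this statement (Proposition~\ref{prop:ctslatticepointprop}) together with the integrality properties afforded by total unimodularity, which only enters in the regular-matroid setting of this section.

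First I would check that $\psi$ descends to a well-defined map on discrete circuit-reversal classes. A discrete circuit reversal of $\mathcal{O}$ by a signed circuit $C$ replaces $\widehat{\mathcal{O}}$ with $\widehat{\mathcal{O}} - C$, so $\psi$ changes by $-AC = 0$; this is also a special case of the continuous statement already proved. For any discrete $\mathcal{O}$ the image $\psi(\mathcal{O}) = \sum_{\mathcal{O}(e_i)=1} v_i$ is an integer combination of columns of $A$, hence a lattice point of $Z_A$.

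For surjectivity, take a lattice point $z \in Z_A$. By Proposition~\ref{prop:ctslatticepointprop} there exists a continuous orientation $\mathcal{O}$ with $\psi(\mathcal{O}) = z$, i.e., the polytope $P_z := \{ x \in [0,1]^E : Ax = z \}$ is nonempty. Here is the one place where regularity is essential: since $A$ is totally unimodular and $z$ is integral, every vertex of $P_z$ is a $\{0,1\}$-vector, so $P_z$ contains an integer point $\widehat{\mathcal{O}'}$ which is the ``hat'' of a discrete orientation $\mathcal{O}'$ with $\psi(\mathcal{O}') = z$.

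For injectivity, suppose discrete orientations $\mathcal{O}, \mathcal{O}'$ satisfy $\psi(\mathcal{O}) = \psi(\mathcal{O}')$. Then $u := \widehat{\mathcal{O}} - \widehat{\mathcal{O}'}$ lies in $\ker(A) \cap \ZZ^E$ and has all coordinates in $\{-1,0,1\}$. By the second assertion of Lemma~\ref{lem:regularcircuitdecomposition}, $u$ decomposes as a sum of \emph{disjoint} signed circuits $C_1, \ldots, C_k$ whose signs agree with those of $u$ on their supports. For each $e$ in the support of $C_i$ we have $C_i(e) = u(e) = \widehat{\mathcal{O}(e)} - \widehat{\mathcal{O}'(e)} \in \{\pm 1\}$, which forces $\mathcal{O}(e) = C_i(e)$; hence $C_i$ is compatible with $\mathcal{O}$. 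Because the $C_i$'s have pairwise disjoint supports, we may perform the corresponding discrete circuit reversals one after the other, and the result is exactly $\mathcal{O}'$. This shows $\mathcal{O}$ and $\mathcal{O}'$ lie in the same discrete circuit-reversal class, completing the bijection.

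The most delicate point is the surjectivity step, where the integrality of the vertex solution requires the totally unimodular hypothesis on $A$; the injectivity step is essentially a bookkeeping application of the integral circuit decomposition (Lemma~\ref{lem:regularcircuitdecomposition}), which again is available only for regular matroids.
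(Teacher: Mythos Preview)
Your proof is correct and follows essentially the same approach as the paper's: total unimodularity guarantees an integral vertex of $\{x\in[0,1]^E:Ax=z\}$ for surjectivity, and Lemma~\ref{lem:regularcircuitdecomposition} gives the disjoint circuit decomposition for injectivity. Your version is in fact slightly more careful than the paper's in working with $\widehat{\mathcal{O}}-\widehat{\mathcal{O}'}$ (a genuine $\{0,\pm1\}$-vector) rather than $\mathcal{O}-\mathcal{O}'$, and in spelling out why each $C_i$ is compatible with $\mathcal{O}$.
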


\begin{proof}
As the columns of $A$ are integral, $\psi$ takes an orientation of $M$ to a lattice point of $Z_A$; conversely, for any lattice point $y\in Z_A$, $A\hat{\alpha}=y, 0\leq \hat{\alpha}_i\leq 1\ \forall i$ has a solution $\hat{\alpha}$, which can be chosen to be integral by the total unimodularity of $A$, hence it corresponds to an orientation.  Thus the image of $\psi$ is precisely the set of lattice points of $Z_A$. By the orthogonality of circuits and cocircuits, two orientations in the same circuit-reversal class map to the same point of $Z_A$.  Conversely, suppose $\psi(\mathcal{O})=\psi(\mathcal{O}')$. 
By Lemma~\ref{lem:regularcircuitdecomposition},
$\mathcal{O}-\mathcal{O}'$ can be written as a sum of disjoint signed circuits in which each signed circuit is compatible with $\mathcal{O}$, and $\mathcal{O}$ can be transformed to $\mathcal{O}'$ via the corresponding circuit-reversals in any order.
\end{proof}

\begin{proposition} \label{prop:discretecompatible}
Each discrete circuit-reversal class of discrete orientations of $M$ contains a unique $\sigma$-compatible discrete orientation.
\end{proposition}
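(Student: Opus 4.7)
The plan is to parallel Theorem~\ref{thm:continuouscompatible} almost verbatim, with the continuous compactness argument replaced by the observation that each discrete circuit-reversal class is a finite subset of $\{\pm 1\}^E$ and with Lemma~\ref{lem:regularcircuitdecomposition} (the integral refinement) used in place of Lemma~\ref{lem:circuitdecomposition}. As in the continuous case, the statement splits cleanly into existence and uniqueness.

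For existence, I would invoke Lemma~\ref{lem:gordan} to fix a vector $w \in \mathbb{R}^E$ with $w \cdot \sigma(C) > 0$ for every circuit $C$ of $M$, and consider the functional $P(\mathcal{O}') := w \cdot \mathcal{O}'$ on the given (finite) discrete circuit-reversal class. Let $\mathcal{O}$ be a maximizer. If $\mathcal{O}$ were not $\sigma$-compatible, there would exist a circuit $C$ such that $-\sigma(C)$ is compatible with $\mathcal{O}$; the discrete circuit reversal along $C$ then produces $\mathcal{O}''$ with
\[
P(\mathcal{O}'') - P(\mathcal{O}) = 2\, w \cdot \sigma(C) > 0,
\]
contradicting maximality. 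Hence every maximizer is $\sigma$-compatible.

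For uniqueness, suppose $\mathcal{O}$ and $\mathcal{O}'$ are two $\sigma$-compatible discrete orientations in the same discrete circuit-reversal class. By Proposition~\ref{prop:latticepointprop} we have $\psi(\mathcal{O}) = \psi(\mathcal{O}')$, which, together with the definition of $\psi$, forces $u := \mathcal{O} - \mathcal{O}' \in \ker(A) \cap \mathbb{Z}^E$. Lemma~\ref{lem:regularcircuitdecomposition} then decomposes $u$ as a sum $\sum_C \lambda_C\, C$ of signed circuits with positive integer coefficients whose signs agree pointwise with those of $u$ on their supports. Since $\mathcal{O}(e) \in \{\pm 1\}$ has the same sign as $u(e)$ whenever $u(e) \neq 0$, each summand $C$ is compatible with $\mathcal{O}$, so $\sigma$-compatibility of $\mathcal{O}$ gives $C = \sigma(\underline{C})$. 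Applying the symmetric argument to $\mathcal{O}' - \mathcal{O} = \sum_C \lambda_C\,(-C)$, the $\sigma$-compatibility of $\mathcal{O}'$ forces $-C = \sigma(\underline{C})$ for each such $C$. These conclusions are incompatible unless the decomposition is empty, giving $\mathcal{O} = \mathcal{O}'$.

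I expect no real obstacle: the existence half is a straightforward finite maximization, and the only subtlety in the uniqueness half is combining integrality of the decomposition with the $\{\pm 1\}$-valuedness of discrete orientations to guarantee that every circuit in the decomposition of $u$ is simultaneously compatible with $\mathcal{O}$ and, after negation, with $\mathcal{O}'$. This sign-matching observation, which has no direct continuous analogue, is what makes the regular (integral) setting crucial here.
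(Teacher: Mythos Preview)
Your proof is correct and follows essentially the same approach as the paper. The only minor difference is in the existence half: the paper argues termination of the greedy reversal process directly from acyclicity (a return to $\mathcal{O}$ after reversals $C_1,\ldots,C_k$ would force $\sigma(C_1)+\cdots+\sigma(C_k)=0$), whereas you pass through Lemma~\ref{lem:gordan} to a potential function $P$ and maximize over the finite class; these are dual formulations of the same idea, and your uniqueness argument via Lemma~\ref{lem:regularcircuitdecomposition} matches the paper's exactly.
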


\begin{proof} The uniqueness assertion follows from Lemma~\ref{lem:regularcircuitdecomposition} and a similar argument as in Proposition~\ref{prop:continuouscompatible}. For existence, start with any orientation $\mathcal{O}$ in the class and reverse some signed circuit $C$ compatible with $\mathcal{O}$ but not compatible with $\sigma$. We claim that the process will eventually stop. Indeed, suppose not: since the number of discrete orientations of $M$ is finite, the orientation will without loss of generality return to $\mathcal{O}$ after reversing some signed circuits $C_1,\ldots,C_k$ in that order (the circuits might not be distinct). Then $-C_1-\cdots-C_k=0$, which means that $\sigma(C_1)+\cdots+\sigma(C_k)=0$, contradicting the acyclicity of $\sigma$. 
\end{proof}

\begin{corollary} \label{coro:latticepoint_are_vertex}
The lattice points of $Z_A$ are exactly the vertices of the subdivision $\Sigma$.
\end{corollary}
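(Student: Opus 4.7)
The corollary is a direct synthesis of three results already established in the excerpt, so the plan is to chase through the bijections they provide.

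My plan is to prove two containments. For the easy direction (vertices are lattice points): by Theorem~\ref{thm:zonotopedecomp}(2) the vertices of $\Sigma$ are precisely the images under $\psi$ of the $\sigma$-compatible discrete orientations of $M$. Since $\psi(\mathcal{O})=\sum_e \widehat{\mathcal{O}(e)}\,v_e$ is a $\{0,1\}$-combination of the integral columns of $A$ (as $A$ is totally unimodular, hence integral), every such image is a lattice point of $Z_A$. So every vertex of $\Sigma$ lies in $Z_A \cap \mathbb{Z}^r$.

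For the other direction (lattice points are vertices), let $z \in Z_A \cap \mathbb{Z}^r$. By Proposition~\ref{prop:latticepointprop}, $z = \psi(\mathcal{O})$ for some discrete orientation $\mathcal{O}$, and any two preimages of $z$ under $\psi$ lie in the same discrete circuit-reversal class. By Proposition~\ref{prop:discretecompatible}, that class contains a unique $\sigma$-compatible discrete orientation $\mathcal{O}_\sigma$, and since $\mathcal{O}$ and $\mathcal{O}_\sigma$ are circuit-reversal equivalent, $\psi(\mathcal{O}_\sigma) = \psi(\mathcal{O}) = z$. Finally, by Theorem~\ref{thm:zonotopedecomp}(2), $\psi(\mathcal{O}_\sigma)$ is a vertex of $\Sigma$, so $z$ is a vertex of $\Sigma$.

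Since no step is hard beyond assembling these three ingredients, I do not expect any real obstacle; the only minor point worth stating cleanly is that in the second direction, we are using the equality $\psi(\mathcal{O}) = \psi(\mathcal{O}_\sigma)$ (which holds because circuit reversals do not change $\psi$, as circuits are orthogonal to cocircuits and hence to the row space where the $\pi_{V^*}(u_e)$ live, or equivalently because $Av = 0$ for $v$ a signed circuit). Thus the proof is essentially a one-line composition of the three preceding results.
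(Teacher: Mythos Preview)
Your proof is correct and follows exactly the same approach as the paper, which simply cites Propositions~\ref{prop:latticepointprop} and~\ref{prop:discretecompatible} together with Theorem~\ref{thm:zonotopedecomp}. You have merely unpacked the composition of these three results explicitly, which is fine.
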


\begin{proof}
This follows from Propositions \ref{prop:latticepointprop}, \ref{prop:discretecompatible} and \ref{prop:zonotopedecomp}.  
\end{proof}

\begin{figure}[ht!]
\begin{center}
   \includegraphics[width=10cm, height = 10cm, keepaspectratio]{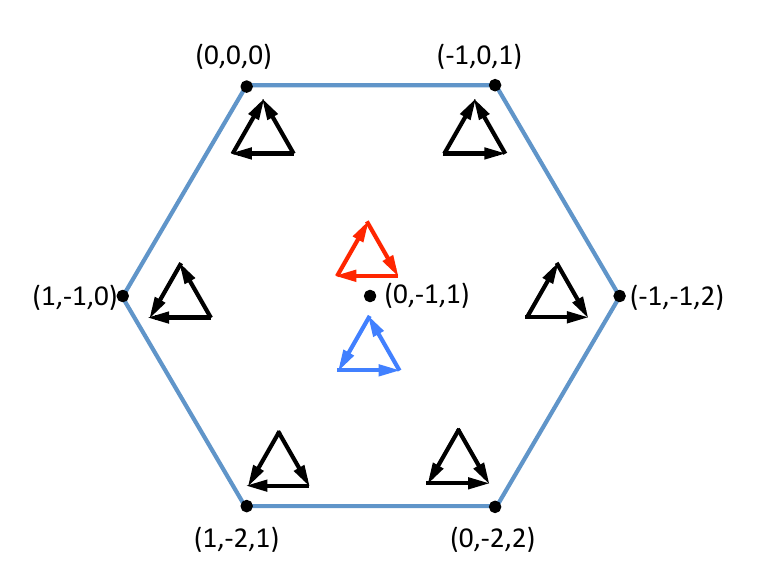}
\end{center}
  \caption{The zonotope associated to $K_3$ and the circuit reversal classes associated to its lattice points by the map $\psi$ from Proposition~\ref{prop:ctslatticepointprop}.  Taking the acyclic signature $\sigma$ from Figure \ref{zonotopeK3_6}, the cycle in blue is $\sigma$-compatible, while the cycle in red is not.  (Note that we are using the full adjacency matrix of $K_3$ to define the zonotope, cf.~Remark~\ref{rmk:graphicmatrix}.) 
  }
  \label{zonotopeK3_3}
\end{figure}

Let $\chi:{\mathcal X}(M) \rightarrow {\mathcal G}(M)$ be the map which associates to each $(\sigma,\sigma^*)$-compatible orientation the discrete circuit-cocircuit reversal class which it represents.

\begin{theorem} [Part (2) of Theorem \ref{thm:SScompatibleorientation}] \label{thm:chibijective}
The map $\chi$ is a bijection.
\end{theorem}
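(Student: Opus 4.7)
The plan is to prove the theorem by establishing surjectivity and injectivity of $\chi$ separately. For surjectivity, I would use a maximizer argument: by Lemma~\ref{lem:gordan} choose $w\in\RR^E$ inducing both $\sigma$ and $\sigma^*$ simultaneously, and given any class $[\mathcal{O}]\in\mathcal{G}(M)$, pick a representative $\mathcal{O}^\star$ maximizing the finitely-valued linear functional $\mathcal{O}\mapsto w\cdot\mathcal{O}$ on the class. For any signed circuit $C$ compatible with $\mathcal{O}^\star$, the reversed orientation $\mathcal{O}^\star-2C$ lies in the same class, so $w\cdot(\mathcal{O}^\star-2C)\le w\cdot\mathcal{O}^\star$, forcing $w\cdot C\ge 0$; since $w$ strictly separates $\sigma(\underline{C})$ from $-\sigma(\underline{C})$, this forces $C=\sigma(\underline{C})$. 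The same argument applied to signed cocircuits shows $\mathcal{O}^\star$ is $\sigma^*$-compatible, hence $\mathcal{O}^\star\in\mathcal{X}(M;\sigma,\sigma^*)$ and $\chi(\mathcal{O}^\star)=[\mathcal{O}]$.

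For injectivity the strategy is geometric, exploiting the shifting interpretation of $\hat\beta$. Suppose $\mathcal{O}_1,\mathcal{O}_2\in\mathcal{X}(M)$ satisfy $\chi(\mathcal{O}_1)=\chi(\mathcal{O}_2)$. By Theorem~\ref{thm:betabijective}, write $\mathcal{O}_i=\phi(B_i)$ for unique bases $B_i\in B(M)$; by Lemma~\ref{lem:phiwelldefined} (and its proof via $w'$ avoiding facet spans) the lattice point $\tilde{z}_{B_i}:=(L^{-1}\circ\psi)(\mathcal{O}_i)\in\widetilde{Z_A}$ satisfies $p_i:=\tilde{z}_{B_i}-\epsilon w'\in\operatorname{int}(\widetilde{Z(B_i)})\subseteq\operatorname{int}(\widetilde{Z_A})$ for all sufficiently small $\epsilon>0$. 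The circuit--cocircuit equivalence gives $\mathcal{O}_2-\mathcal{O}_1\in 2(\Lambda(M)+\Lambda^*(M))$, and since $\Lambda(M)$ and $\Lambda^*(M)$ are orthogonal we obtain a unique decomposition $\mathcal{O}_2-\mathcal{O}_1=2u+2v$ with $u\in\Lambda(M)$ and $v\in\Lambda^*(M)$. A direct computation---using $Au=0$, the identity $\psi(\mathcal{O})=\tfrac12(A\mathbf{1}+A\mathcal{O})$, and the fact that $L^{-1}(Av)=v$ for $v\in V^*(M)$---yields $\tilde{z}_{B_2}-\tilde{z}_{B_1}=v$, so $p_2-p_1=v\in\Lambda^*(M)$.

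The concluding ingredient, and the step I expect to be the main obstacle to nail down cleanly, is the tiling property of the row zonotope: the translates $\{\widetilde{Z_A}+\lambda:\lambda\in\Lambda^*(M)\}$ have pairwise disjoint interiors, equivalently no two distinct interior points of $\widetilde{Z_A}$ differ by a nonzero element of $\Lambda^*(M)$. Granted this, $p_1,p_2\in\operatorname{int}(\widetilde{Z_A})$ with $p_2-p_1\in\Lambda^*(M)$ forces $v=0$ and $p_1=p_2$; combined with the disjointness of the open cells of $\widetilde\Sigma$ (Theorem~\ref{thm:zonotopedecomp}) this yields $B_1=B_2$, hence $\mathcal{O}_1=\mathcal{O}_2$. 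The tiling property can be established either by a volume comparison (combining the Gramian formula $\operatorname{covol}(\Lambda^*(M))=\sqrt{\det(AA^T)}$ with Maurer's Matrix--Tree identity $\det(AA^T)=|B(M)|$ to conclude $\operatorname{vol}(\widetilde{Z_A})=\operatorname{covol}(\Lambda^*(M))$, forcing the quotient $\widetilde{Z_A}\to V^*(M)/\Lambda^*(M)$ to be injective on interiors by an almost-everywhere multiplicity argument), or more directly by a cell-by-cell tiling analysis based on the parallelepiped description of each $\widetilde{Z(B)}$ and the total unimodularity of $A$, along the lines of the ``volume proof'' of Matrix--Tree referenced in the introduction.
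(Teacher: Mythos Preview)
Your surjectivity argument is correct and is essentially a repackaging of the existence half of Proposition~\ref{prop:discretecompatible} (and its dual), applied directly to a circuit--cocircuit class rather than in two stages; the paper does the same thing implicitly.

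Your injectivity argument, however, takes a far more circuitous route than the paper's and leans on a nontrivial ingredient you have not fully justified. The paper's proof is one line: it follows from Proposition~\ref{prop:discretecompatible} together with Proposition~\ref{prop:orientdecomp}. The point you are missing is that Proposition~\ref{prop:orientdecomp} (the ``3-painting'' lemma) says every element lies either in the circuit part or in the cocircuit part of an orientation, so any positive circuit is \emph{disjoint} from any positive cocircuit. Consequently, circuit reversals and cocircuit reversals act on disjoint sets of elements, commute, and preserve each other's compatibility conditions. Hence if $\mathcal{O}_1,\mathcal{O}_2\in\mathcal{X}(M;\sigma,\sigma^*)$ are circuit--cocircuit equivalent, one factors the equivalence as $\mathcal{O}_1\sim_{\mathrm{circ}}\mathcal{O}''\sim_{\mathrm{cocirc}}\mathcal{O}_2$; since cocircuit reversals do not disturb $\sigma$-compatibility, $\mathcal{O}''$ is $\sigma$-compatible, so $\mathcal{O}_1=\mathcal{O}''$ by the uniqueness in Proposition~\ref{prop:discretecompatible}, and then $\mathcal{O}''=\mathcal{O}_2$ by the dual uniqueness. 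No zonotopes, no tilings.

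Your geometric argument can be made to work, but the tiling step is a genuine obstacle that you have not closed. The equality $\mathrm{vol}(\widetilde{Z_A})=\mathrm{covol}(\Lambda^*(M))$ is correct, but volume equality alone does \emph{not} force the quotient map $\widetilde{Z_A}\to V^*(M)/\Lambda^*(M)$ to be injective on interiors: you also need surjectivity (covering), which you have not argued. The fact that $\widetilde{Z_A}$ tiles $V^*(M)$ under $\Lambda^*(M)$ is true---it is the Shephard--McMullen characterization of space-tiling zonotopes combined with the identification of the facet-translation lattice with $\Lambda^*(M)$---but this is external machinery of roughly the same weight as the result you are trying to prove. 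Since Proposition~\ref{prop:orientdecomp} gives injectivity for free, you should use it.
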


\begin{proof}
This follows directly from Propositions \ref{prop:discretecompatible} and \ref{prop:orientdecomp}
\end{proof}

\begin{corollary} [Theorem~\ref{thm:mainbijectionforregularmatroids}]
The map $\beta:B(M)\rightarrow\mathcal{G}(M)$ given by $B\mapsto [\mathcal{O}(B)]$ is a bijection.
\end{corollary}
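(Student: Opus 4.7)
The plan is to observe that $\beta$ factors as $\beta = \chi \circ \hat{\beta}$, and then invoke two bijectivity results that have already been assembled in the preceding sections. The corollary is essentially formal at this point, so the ``proof'' amounts to checking that the factorization makes sense and citing the right pieces.

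First I would verify that $\hat{\beta}$ actually lands in $\mathcal{X}(M;\sigma,\sigma^*)$, i.e.\ that the orientation $\mathcal{O}_B$ constructed from fundamental circuits and cocircuits is both $\sigma$-compatible and $\sigma^*$-compatible. This is exactly the content of Proposition~\ref{prop:betaCCM}: one direction of compatibility follows from Theorem~\ref{thm:phibetaequal} (the geometric identification $\phi=\hat{\beta}$, which manifestly produces $\sigma$-compatible orientations by construction of $\phi$), and the dual direction follows by the symmetry of the defining recipe under swapping circuits and cocircuits. With this in place, the composition $\chi \circ \hat{\beta}$ is literally defined, and its output on $B$ is the class $[\mathcal{O}_B]$, which is precisely $\beta(B)$.

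Next I would invoke Theorem~\ref{thm:betabijective} (the realizable version Theorem~\ref{thm:realizablemainthm}) to conclude that $\hat{\beta} : B(M) \to \mathcal{X}(M;\sigma,\sigma^*)$ is a bijection. Since a regular matroid is, in particular, representable over $\mathbb{R}$, the hypothesis is satisfied. Then I would invoke Theorem~\ref{thm:chibijective} to conclude that $\chi : \mathcal{X}(M;\sigma,\sigma^*) \to \mathcal{G}(M)$ is a bijection; this is the step where regularity is genuinely needed, since the uniqueness of a $\sigma$-compatible representative in each discrete circuit-reversal class (Proposition~\ref{prop:discretecompatible}) relies on the integral circuit decomposition Lemma~\ref{lem:regularcircuitdecomposition}, which in turn depends on total unimodularity.

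The composition of two bijections is a bijection, so $\beta$ is a bijection. There is no real obstacle here: all the substantive work, both combinatorial (discrete circuit-cocircuit reversal classes, acyclic signatures, $\sigma$-compatibility) and geometric (the zonotopal subdivision $\widetilde{\Sigma}$, the shifting map $v \mapsto v + \epsilon w'$, and the identification $\phi = \hat{\beta}$), has already been done in Sections~\ref{sec:proof} and~\ref{discretestuffforregularmatroids}. The only thing to be careful about is to state explicitly that one is packaging Theorems~\ref{thm:betabijective} and~\ref{thm:chibijective} together via the factorization $\beta = \chi \circ \hat{\beta}$, rather than attempting a direct argument.
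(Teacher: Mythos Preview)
Your proposal is correct and matches the paper's own proof essentially verbatim: the paper simply notes that $\hat{\beta}$ is a bijection onto $\mathcal{X}(M;\sigma,\sigma^*)$ by Theorem~\ref{thm:betabijective} and then composes with $\chi$ (whose bijectivity is Theorem~\ref{thm:chibijective}, stated immediately before). Your added remark that Proposition~\ref{prop:betaCCM} is what guarantees the composition is well-defined is a fair clarification, though the paper leaves this implicit.
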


\begin{proof}
The map $\hat{\beta}:B\mapsto \mathcal{O}(B)$ is a bijection between $B(M)$ and $\mathcal{X}(M;\sigma,\sigma^*)$ by Theorem~\ref{thm:betabijective}. Now compose this map with $\chi$.
\end{proof}

\subsection{The circuit-cocircuit reversal system as a $\Jac(M)$-torsor}
\label{sec:Torsor}

In this section we will define a natural action of $\Jac(M)$ on the set ${\mathcal G}(M)$ of circuit-cocircuit equivalence classes of orientations of $M$ and prove that the action is simply transitive. We will also discuss an efficient algorithm for computing this action, along with an application to randomly sampling bases of $M$.

\subsection{Definition of the action}

Recall from (\ref{eq:canonisom}) that $\Jac(M)$ can be identified with $\frac{\mathbb{Z}^E}{\Lambda_A(M)\oplus \Lambda_A^*(M)}$. Note that such group is generated by $[\overrightarrow{e}],e\in E$ (here we use an overhead arrow to emphasize that we are keeping track of orientations). 

The group action $\Jac(M)\circlearrowright\mathcal{G}(M)$ is defined by linearly extending the following action of each generator $[\overrightarrow{e}]$ on circuit-cocircuit reversal classes: pick an orientation $\mathcal{O}$ from the class so that $e$ is oriented as $\overrightarrow{e}$ in $\mathcal{O}$, reverse the orientation of $e$ in $\mathcal{O}$ to obtain $\mathcal{O}'$, and set $[\overrightarrow{e}]\cdot[\mathcal{O}]=[\mathcal{O}']$.
This action generalizes the one defined in terms of {\em path reversals} by the first author in the graphical case \cite[Section 5]{backman2014riemann}.

\begin{figure}[ht!]
\begin{center}
    \includegraphics[width=10cm, height = 10cm, keepaspectratio]{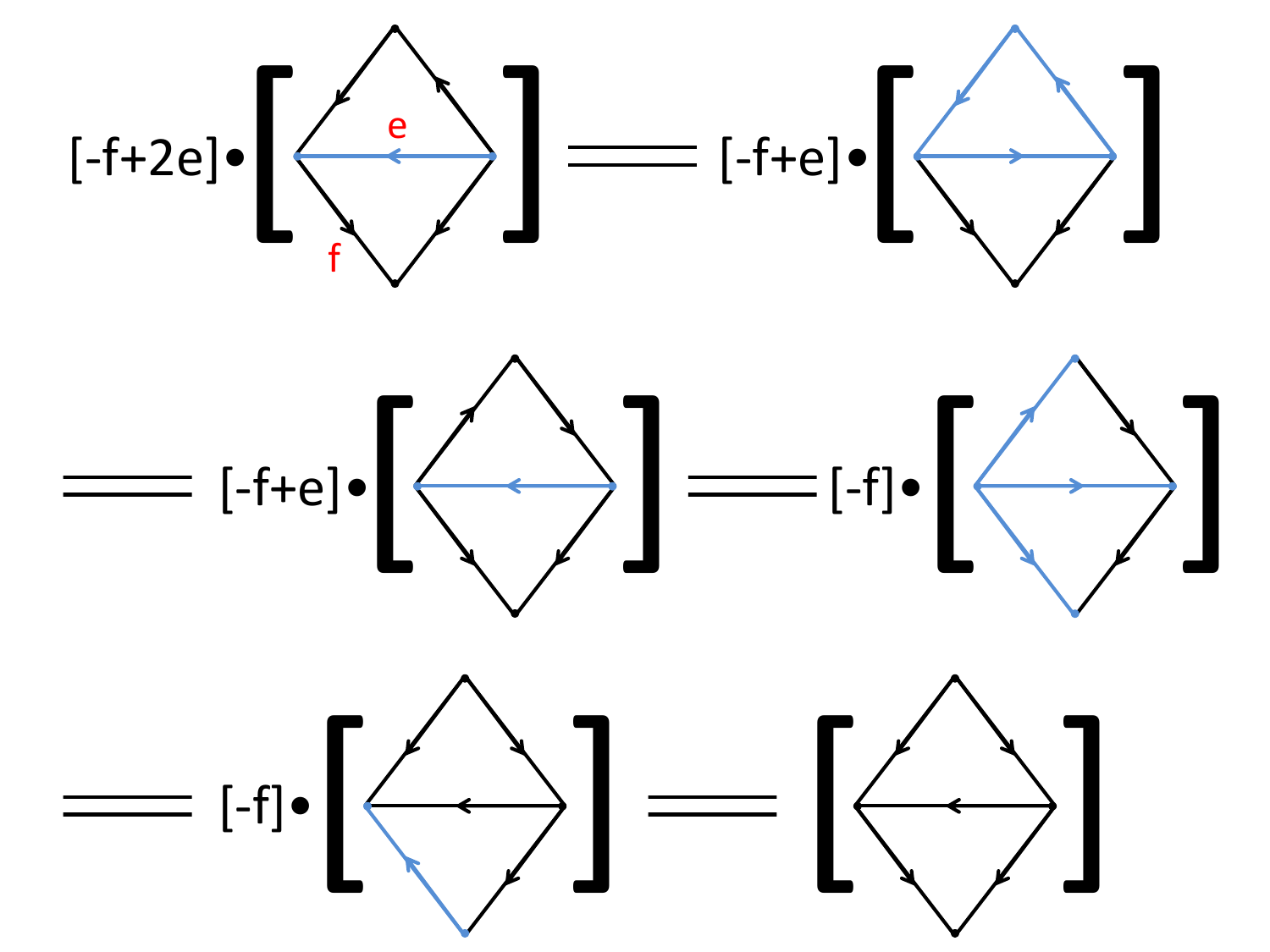}
\end{center}
  \caption{Example of the torsor. Here the reference orientations of $e,f$ are the same as the orientation we begin with.}
  \label{Torsor_Example}
\end{figure}

Our main goal for the rest of this section will be to prove:

\begin{theorem} \label{Thm:Action}
The group action $\circlearrowright$ is well-defined and simply transitive. 
\end{theorem}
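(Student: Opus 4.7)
The plan is to reduce the whole theorem to the bijectivity of a single \emph{potential map} $\bar{\rho}:\mathcal{G}(M)\to\Jac(M)$. Fix a reference orientation $\mathcal{O}_0$ of $M$ and, using the canonical identification $\Jac(M)\cong\mathbb{Z}^E/(\Lambda_A(M)\oplus\Lambda_A^*(M))$, set
\begin{equation*}
\rho(\mathcal{O})\;:=\;\left[\tfrac{1}{2}(\mathcal{O}-\mathcal{O}_0)\right]\in\Jac(M).
\end{equation*}
A compatible circuit reversal alters $\mathcal{O}$ by $\pm 2C$ with $C\in\Lambda_A(M)$, and a compatible cocircuit reversal by $\pm 2K$ with $K\in\Lambda_A^*(M)$; both vanish in $\Jac(M)$, so $\rho$ descends to $\bar{\rho}$. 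A single-edge reversal at $e$ shifts $\rho$ by $\pm[\overrightarrow{e}]$, so (granting well-definedness) the combinatorial action of the theorem corresponds through $\bar{\rho}$ to the translation action of $\Jac(M)$ on itself; in particular simple transitivity will follow automatically as soon as $\bar{\rho}$ is a bijection.

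I would next dispatch the routine components of well-definedness. For any $[\mathcal{O}]$ and $e\in E$, Proposition~\ref{prop:orientdecomp} places $e$ in a compatible signed circuit or cocircuit of any representative, and reversing it flips $e$ inside $[\mathcal{O}]$. Triviality of $\Lambda_A(M)\oplus\Lambda_A^*(M)$ is a direct check: acting by a signed circuit $C$ amounts to reversing each $e\in\mathrm{supp}(C)$ in turn starting from a representative compatible with $C$, and since reversing one such $e$ leaves the remaining edges of $C$ unchanged, the cumulative effect is the single circuit reversal along $C$, which is trivial in $\mathcal{G}(M)$; cocircuits are symmetric. Independence of the choice of representative when acting by a single generator, and commutativity of generators, both follow from injectivity of $\bar{\rho}$: different choices and orderings produce orientations whose potentials differ by the same element of $\Jac(M)$.

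The whole theorem is thus reduced to bijectivity of $\bar{\rho}$. Since $|\mathcal{G}(M)|=|B(M)|=|\Jac(M)|$ -- the first equality via Theorems~\ref{thm:betabijective} and~\ref{thm:chibijective}, the second via the matroidal Matrix--Tree theorem -- this is equivalent to injectivity. Suppose $\rho(\mathcal{O}_1)=\rho(\mathcal{O}_2)$, so that $\gamma:=\tfrac{1}{2}(\mathcal{O}_1-\mathcal{O}_2)$ lies in $\Lambda_A(M)+\Lambda_A^*(M)$; note $\gamma$ is automatically $\{0,\pm 1\}$-valued with $\gamma(e)=\mathcal{O}_1(e)$ on its support. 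The ``pure'' subcases are direct: if $\gamma\in\Lambda_A(M)$, the $\{0,\pm 1\}$-case of Lemma~\ref{lem:regularcircuitdecomposition} writes $\gamma$ as a disjoint union of signed circuits each compatible with $\mathcal{O}_1$, and reversing them in any order carries $\mathcal{O}_1$ to $\mathcal{O}_2$; the case $\gamma\in\Lambda_A^*(M)$ is dual.

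The genuinely mixed case -- where no decomposition $\gamma=\alpha+\beta$ has both summands $\{0,\pm 1\}$-valued -- is the main obstacle. My planned approach is to pick $w\in\mathbb{R}^E$ simultaneously inducing acyclic signatures $\sigma$ of $C(M)$ and $\sigma^*$ of $C^*(M)$ (which exists by Lemma~\ref{lem:gordan} and the orthogonality of circuits and cocircuits), and iteratively perform compatible circuit or cocircuit reversals that strictly increase $w\cdot\mathcal{O}$; since $\{-1,1\}^E$ is finite, this procedure terminates at a $(\sigma,\sigma^*)$-compatible orientation. Applying it to both $\mathcal{O}_1$ and $\mathcal{O}_2$ and invoking Theorem~\ref{thm:chibijective} reduces the problem to showing $\bar{\rho}\vert_{\mathcal{X}(M;\sigma,\sigma^*)}$ is injective. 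Because $|\mathcal{X}(M;\sigma,\sigma^*)|=|\Jac(M)|$, injectivity is equivalent to bijectivity of $B\mapsto\rho(\mathcal{O}_B)$ from $B(M)$ to $\Jac(M)$, which I would attack via the geometric description of $\hat{\beta}$ as a shifting map from Theorem~\ref{thm:betaphiagree}, reducing it to an explicit linear-algebraic surjectivity statement handled by a Farkas/unimodularity argument in the spirit of the surjectivity proof in Theorem~\ref{thm:betabijective}. Once $\bar{\rho}$ is shown to be a bijection, well-definedness and simple transitivity follow as noted above, completing the proof.
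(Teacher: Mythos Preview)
Your potential-map approach is genuinely different from the paper's, and the reduction is clean: packaging independence-of-representative, commutativity, and simplicity into the single assertion that $\bar{\rho}$ is injective is an elegant organizing principle, and your check that $\bar{\rho}$ descends to $\mathcal{G}(M)$ is correct.

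The problem is that you never actually prove that single assertion. You correctly isolate the ``mixed case'' as the crux, reduce it (via $\chi$ and $\hat{\beta}$) to bijectivity of $B\mapsto\rho(\mathcal{O}_B)$, and then stop, saying only that you ``would attack [it] via the geometric description of $\hat{\beta}$ as a shifting map \ldots\ in the spirit of the surjectivity proof in Theorem~\ref{thm:betabijective}''. That is a plan, not a proof. Unpacked, the needed claim is that $\{\psi(\mathcal{O}_B):B\in B(M)\}$ is a complete set of coset representatives for $\mathbb{Z}^r/\Col_{\mathbb{Z}}(AA^T)$; nothing already established in the paper gives this, and the Farkas step in Theorem~\ref{thm:betabijective} solves a feasibility problem inside $\widetilde{Z_A}$, not a coset-representative problem modulo $AA^T$. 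You have relocated the hard part rather than dispatched it. (Incidentally, your treatment of the ``pure'' subcases is superfluous: once $\chi$ is a bijection, injectivity of $\bar{\rho}$ is literally equivalent to injectivity of $\rho|_{\mathcal{X}}$, so the case split buys nothing.)

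The paper proceeds in the opposite order. Well-definedness is proved directly: existence of a representative with a prescribed edge direction (Proposition~\ref{Prop:Torsor_WellDef}), independence of representative (Proposition~\ref{Prop:IndepRep}), and---the genuinely delicate step---commutativity of two generators (Proposition~\ref{Prop:commute_action}), handled by an explicit circuit/cocircuit manipulation using Lemmas~\ref{Lem:ExtendCC} and~\ref{Lem:AlmostPosCC}. Transitivity is then immediate (Proposition~\ref{Prop:TransTorsor}), and simplicity is obtained from the short inductive lemma that every coset in $\mathbb{Z}^E/(\Lambda_A\oplus\Lambda_A^*)$ has a $\{0,\pm 1\}$-valued representative (Proposition~\ref{Prop:JacRep}). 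If you want to salvage your strategy, the cleanest fix is to prove a representative lemma of this type directly and use it to obtain surjectivity of $\bar{\rho}$; combined with the cardinality equality you already invoke, that would close the gap without the unfinished geometric detour.
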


\begin{remark}
For ease of exposition, in the rest of this section we will use the term {\em positive} (co)circuit (with respect to an orientation $\mathcal{O}$) to denote a signed (co)circuit that is compatible with $\mathcal{O}$. Furthermore, given an orientation $\mathcal{O}$ and a subset $X\subset E$, we denote by $_{-X}\mathcal{O}$ the orientation obtained by reversing elements of $X$ in $\mathcal{O}$.  For a (co)circuit $C$ of $\mathcal{O}$, we say that $_{-X}C$ is positive if $C$ is a positive (co)circuit of $_{-X}\mathcal{O}$. Finally, we denote by $\rchi_X$ the $\{0,1\}$-characteristic vector whose support is $X$.
\end{remark}

\subsection{The action is well-defined}

In order to show that the action of $\Jac(M)$ on $\mathcal{G}(M)$ is well-defined, we first show that the corresponding action (which by abuse of notation we continue to write as $\circlearrowright$) of $\ZZ^E$ on $\mathcal{G}(M)$ is well-defined, then that the action descends to the quotient by $\Lambda_A(M)\oplus \Lambda_A^*(M)$.

\begin{lemma} \label{Lem:ExtendCC}
Let $e\in E$, and suppose $X\subset E\setminus e$ is a positive cocircuit in $\mathcal{O}\setminus e$ but not in $\mathcal{O}$. Then $Y:=X\cup\{e\}$ is a cocircuit in $\mathcal{O}$, and either $Y$ or $_{-e}Y$ is positive.
\end{lemma}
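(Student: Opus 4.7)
The plan is to extract both conclusions from the behavior of signed cocircuits under the matroid deletion $M \mapsto M \setminus \{e\}$. Recall that signed cocircuits of $M$ (resp.\ $M \setminus \{e\}$) are realized as specific $\{0, \pm 1\}$-vectors in the row space of $A$ (resp.\ in the image of that row space under the projection deleting the $e$-coordinate), and throughout I will reason with such vector representatives.

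For part (1), that $Y := X \cup \{e\}$ is a cocircuit of $M$, I would first rule out the possibility that $X$ is itself a cocircuit of $M$. If it were, then a signed cocircuit of $M$ supported on $X$ would be a row-space vector $v$ of $A$ with $\operatorname{supp}(v) = X \subseteq E \setminus \{e\}$, and in particular $v(e) = 0$. Because $v$ vanishes on $e$ and $\mathcal{O}$ agrees with $\mathcal{O} \setminus \{e\}$ on $E \setminus \{e\}$, compatibility of $v$ with $\mathcal{O}$ is equivalent to its compatibility with $\mathcal{O} \setminus \{e\}$; positivity of $X$ in $\mathcal{O} \setminus \{e\}$ would then force positivity of $X$ in $\mathcal{O}$, contradicting the hypothesis. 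By the standard classification of cocircuits under deletion---each cocircuit of $M \setminus \{e\}$ is either a cocircuit of $M$ avoiding $e$, or is of the form $C^* \setminus \{e\}$ for a cocircuit $C^*$ of $M$ containing $e$---the only remaining option is $Y = X \cup \{e\}$ being a cocircuit of $M$; the minimality of $Y$ is inherited from the minimality of $X$ in the projected row space, after checking that the natural lift of a projected minimal vector cannot be strictly dominated in the full row space.

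For part (2), fix a signed cocircuit $\tilde Y$ of $M$ with support $Y$. Restricting $\tilde Y$ to $E \setminus \{e\}$ yields a signed cocircuit of $M \setminus \{e\}$ supported on $X$, and after possibly replacing $\tilde Y$ by $-\tilde Y$ I may assume this restriction is the positive signed cocircuit (the one compatible with $\mathcal{O} \setminus \{e\}$). Then $\tilde Y(f) = \mathcal{O}(f)$ for every $f \in X$, so the only coordinate that could disagree with $\mathcal{O}$ is $\tilde Y(e) \in \{\pm 1\}$: if $\tilde Y(e) = \mathcal{O}(e)$ then $Y$ is positive in $\mathcal{O}$, while if $\tilde Y(e) = -\mathcal{O}(e) = (_{-e}\mathcal{O})(e)$ then $Y$ is positive in $_{-e}\mathcal{O}$, i.e.\ $_{-e}Y$ is positive in the notation of the remark. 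The main obstacle I anticipate is the minimality step in part (1)---ensuring that $Y$ itself (rather than a proper subset of it) is the cocircuit of $M$ extending the deletion picture---while part (2) is essentially bookkeeping about a single coordinate.
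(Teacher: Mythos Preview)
Your argument is correct, and the obstacle you anticipate does not in fact arise. You invoke the standard classification: every cocircuit $X$ of $M\setminus\{e\}$ equals $C^*\setminus\{e\}$ for some cocircuit $C^*$ of $M$. Once you have ruled out the case $e\notin C^*$ (your case (a)), the remaining case gives $C^*=X\cup\{e\}=Y$ directly as a cocircuit of $M$; no further minimality check on $Y$ is required, since $C^*$ is already a cocircuit by hypothesis. Your sentence about ``the natural lift of a projected minimal vector'' describes a second, parallel line of reasoning (lift the signed-cocircuit vector and verify its support is minimal), but that detour is unnecessary once you use the classification. Part (2) is exactly the single-coordinate bookkeeping you describe.

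The paper proceeds along the more computational route you sketch as the alternative: it writes the positive cocircuit on $X$ as $w^TA|_{E\setminus\{e\}}=\rchi_X$, lifts to $w^TA=\rchi_X+\lambda\rchi_{\{e\}}$ with $\lambda\neq 0$, applies the dual of Lemma~\ref{lem:circuitdecomposition} to extract a cocircuit $D\subseteq Y$, and then rules out $D\subsetneq Y$ by a short case analysis (handling $D=\{e\}$ separately), finally reading off $\lambda=\pm 1$. Your structural argument via the deletion classification is cleaner and avoids that case analysis; the paper's version has the virtue of remaining entirely inside the totally-unimodular linear-algebra framework already in place and not importing the deletion fact as a black box.
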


\begin{proof}
By assumption, $w^TA|_{E\setminus e}=\rchi_X$ for some $w$. Hence $w^TA=\rchi_X+\lambda\rchi_{\{e\}}$ for some $\lambda\neq 0$. By the dual of Proposition \ref{lem:circuitdecomposition}, $Y$ contains a cocircuit $D$. 
If $D\cap X=\emptyset$, then $D=\{e\}$, which in turn shows that $X$ itself is a positive cocircuit. Now we must have $X\subset D$, or otherwise $D\cap X\subsetneq X$ would be a cocircuit in $M\setminus e$. Therefore $Y=D$ is a cocircuit, and $\lambda=\pm 1$, i.e. either $Y$ or $_{-e}Y$ is positive. 
\end{proof}

\begin{lemma} \label{Lem:AlmostPosCC}
Suppose $e \in M$ is contained in some positive circuit of $\mathcal{O}$, and that $Y$ is a subset of $E$ containing $e$ such that $_{-e}Y$ is a positive cocircuit. Then any positive circuit containing $e$ intersects $Y$ in exactly two elements.
\end{lemma}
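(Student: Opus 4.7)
The plan is to exploit the orthogonality of signed circuits and signed cocircuits in the regular matroid $M$: since every signed circuit $C$ lies in $\ker(A)$ and every signed cocircuit $D$ lies in the row space of $A$, we have
\[
\langle C, D \rangle \;=\; \sum_{f \in E} C(f)\, D(f) \;=\; 0,
\]
and only the elements of $C \cap Y$ contribute to this sum (where $Y$ is the support of $D := {}_{-e}Y$). This is the classical property already used implicitly throughout \S\ref{sec:regularmatroids} and \S\ref{sec:continuousreversals} and is what replaces the usual ``circuit and cocircuit intersect in an even number of elements'' statement in the signed setting.

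Let $C$ be any positive circuit of $\mathcal{O}$ containing $e$, and set $D := {}_{-e}Y$, viewed as a signed cocircuit compatible with ${}_{-e}\mathcal{O}$. First I would observe that $e \in C \cap Y$ (since $e \in C$ by assumption and $e \in Y$ by definition of $Y$), so $C \cap Y$ is non-empty. Since $D$ has support $Y$, orthogonality reduces to
\[
\sum_{f \in C \cap Y} C(f)\, D(f) \;=\; 0.
\]

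Now I would compute each term using the compatibility conditions. For $f \in (C \cap Y) \setminus \{e\}$: positivity of $C$ with respect to $\mathcal{O}$ gives $C(f) = \mathcal{O}(f)$, while positivity of $D$ with respect to ${}_{-e}\mathcal{O}$ gives $D(f) = ({}_{-e}\mathcal{O})(f) = \mathcal{O}(f)$ since $f \neq e$; hence $C(f) D(f) = \mathcal{O}(f)^2 = 1$. At $f = e$ instead, $C(e) = \mathcal{O}(e)$ but $D(e) = ({}_{-e}\mathcal{O})(e) = -\mathcal{O}(e)$, so $C(e) D(e) = -1$. Substituting these values into the orthogonality identity yields
\[
-1 + \bigl(|C \cap Y| - 1\bigr) \;=\; 0,
\]
so $|C \cap Y| = 2$, which is exactly the required conclusion.

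There is no genuine obstacle here; the only point requiring attention is to confirm that the hypotheses really do let us invoke orthogonality with the right signs, i.e.\ that $C$ and $D$ can simultaneously be treated as vectors in $\Lambda(M)$ and $\Lambda^*(M)$ whose values on overlapping elements are $\pm 1$ and encode the claimed compatibility. This is immediate from the definitions of positive circuit and positive cocircuit given at the start of \S\ref{dccrs} and from the representation of $M$ by the totally unimodular matrix $A$.
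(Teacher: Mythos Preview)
Your proof is correct and is essentially the same as the paper's: both arguments compute the inner product $\langle C, D\rangle = 0$ between the positive circuit $C$ and the signed cocircuit $D={}_{-e}Y$, observing that the contribution at $e$ is $-1$ and at every other element of $C\cap Y$ is $+1$. The paper simply normalizes so that $\mathcal{O}$ is the reference orientation (writing $A\rchi_C=0$ and $v^TA=\rchi_{Y-e}-\rchi_{\{e\}}$), whereas you keep track of the signs $\mathcal{O}(f)$ explicitly; the underlying idea is identical.
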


\begin{proof}
Let $C$ be a positive circuit containing $e$. By assumption, there exists a vector $v$ such that $v^TA=\rchi_{Y-e}-\rchi_{\{e\}}$.  Then  $0=v^TA\rchi_C=|(Y-e)\cap C|-1$, i.e., $Y$ intersects $C$ in $e$ together with exactly one more element.
\end{proof}

\begin{proposition} \label{Prop:Torsor_WellDef}
For every $[\mathcal{O}]\in\mathcal{G}(M)$ and oriented element $\overrightarrow{e}$, there exists $\tilde{\mathcal{O}}\in [\mathcal{O}]$ so that $e$ is oriented as $\overrightarrow{e}$ in $\tilde{\mathcal{O}}$.
\end{proposition}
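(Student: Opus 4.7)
The plan is to apply the orientation decomposition Proposition~\ref{prop:orientdecomp} directly to an arbitrary representative of the class $[\mathcal{O}]$. Pick any $\mathcal{O}\in[\mathcal{O}]$. If the orientation of $e$ given by $\mathcal{O}$ already agrees with $\overrightarrow{e}$, we are done by taking $\tilde{\mathcal{O}}=\mathcal{O}$. Otherwise $\mathcal{O}(e)=-\overrightarrow{e}(e)$, and the goal is to exhibit a single circuit or cocircuit reversal, compatible with $\mathcal{O}$, that flips the sign at $e$ while keeping us inside the same circuit-cocircuit reversal class.

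Apply Proposition~\ref{prop:orientdecomp} to $\mathcal{O}$ at the element $e$. Exactly one of two things happens: either there is a signed circuit $C$ with $e\in\operatorname{supp}(C)$ such that $\mathcal{O}$ is compatible with $C$, or there is a signed cocircuit $C^{*}$ with $e\in\operatorname{supp}(C^{*})$ such that $\mathcal{O}$ is compatible with $C^{*}$. In the first case let $\tilde{\mathcal{O}}$ be the result of performing the (discrete) circuit reversal along $C$; in the second case let $\tilde{\mathcal{O}}$ be the result of performing the cocircuit reversal along $C^{*}$. By the very definition of $\mathcal{G}(M)$ as the set of equivalence classes under discrete circuit and cocircuit reversals (see \S\ref{dccrs}), we have $\tilde{\mathcal{O}}\in[\mathcal{O}]$ in both cases. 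Moreover, since $e$ lies in the support of the (co)circuit that was reversed, $\tilde{\mathcal{O}}(e)=-\mathcal{O}(e)=\overrightarrow{e}(e)$, which is precisely what is required.

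The main content of the argument is thus nothing beyond the orientation decomposition itself, i.e.\ the standard fact that every ground-set element belongs to either a positive signed circuit or a positive signed cocircuit of any given orientation. This is the only nontrivial ingredient, and it is imported as Proposition~\ref{prop:orientdecomp}; once one has it, the statement of Proposition~\ref{Prop:Torsor_WellDef} is an immediate case analysis. In particular, the preparatory Lemmas~\ref{Lem:ExtendCC} and~\ref{Lem:AlmostPosCC} are not needed here; they appear to be set up for the subsequent step in the well-definedness proof, where one must check that the class $[\mathcal{O}']$ obtained by reversing $e$ in $\tilde{\mathcal{O}}$ does not depend on the choice of representative $\tilde{\mathcal{O}}$, and that the induced $\ZZ^{E}$-action descends to $\Lambda_{A}(M)\oplus\Lambda_{A}^{*}(M)$.
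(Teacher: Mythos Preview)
Your proof is correct and essentially identical to the paper's own argument: both invoke Proposition~\ref{prop:orientdecomp} to find a positive circuit or cocircuit through $e$ and then perform the corresponding reversal if $e$ is not already oriented as desired. Your additional remark that Lemmas~\ref{Lem:ExtendCC} and~\ref{Lem:AlmostPosCC} are not needed here is accurate; the paper likewise uses them only in the later Proposition~\ref{Prop:commute_action}.
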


\begin{proof}
By Proposition~\ref{prop:orientdecomp}, $e$ is either contained in a positive circuit or cocircuit $C$. If $e$ is not already oriented as $\overrightarrow{e}$ in $\mathcal{O}$, reverse $C$.
\end{proof}

\begin{proposition} \label{Prop:IndepRep}
The action of $\overrightarrow{e}$ on $[\mathcal{O}]$ is independent of which orientation we choose.
\end{proposition}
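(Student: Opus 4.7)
The plan is to show that if $\mathcal{O}_1\sim\mathcal{O}_2$ and both orient $e$ as $\overrightarrow{e}$, then the orientations $\mathcal{O}_1',\mathcal{O}_2'$ obtained by flipping $e$ in each also satisfy $\mathcal{O}_1'\sim\mathcal{O}_2'$. The reduction I would aim for is the following: any witnessing reversal sequence from $\mathcal{O}_1$ to $\mathcal{O}_2$ can be modified into one in which no reversal ever flips the orientation of $e$. Once such a modified sequence is in hand, it applies verbatim to $\mathcal{O}_1'$, since any positive signed (co)circuit whose support avoids $e$ is positive in an orientation $\mathcal{N}$ if and only if it is positive in the orientation obtained by flipping $e$ in $\mathcal{N}$; the sequence therefore transports $\mathcal{O}_1'$ to $\mathcal{O}_2'$.

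To produce such a modified sequence I would induct on the number of reversals involving $e$ in the given sequence, which is automatically even because $\mathcal{O}_1(e)=\mathcal{O}_2(e)$. If this number is zero, there is nothing to do. Otherwise let $R_s$ be the first $e$-involving reversal and $R_t$ (with $t>s$) the next one; by the choice of $s$ and $t$, the intermediate reversals $S_1,\ldots,S_m$ at steps $s+1,\ldots,t-1$ do not involve $e$. At the orientation $\mathcal{N}_{s-1}$ just before $R_s$, the edge $e$ is still oriented as $\overrightarrow{e}$, so Proposition~\ref{prop:orientdecomp} forces $R_s$ to be either a positive signed circuit or a positive signed cocircuit through $e$ (but not both). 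The analogous dichotomy holds for $R_t$ acting on $\mathcal{N}_{t-1}$, where $e$ is oriented oppositely. The goal of the inductive step is to replace the block $R_s,S_1,\ldots,S_m,R_t$ by an alternate sequence with the same net effect and strictly fewer $e$-involving reversals.

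The construction splits by the types of $R_s$ and $R_t$. In the two same-type cases, the combined vector $R_s\pm R_t$, suitably transported through the $e$-free intermediate reversals $S_1,\ldots,S_m$ so that it lives in a common orientation's frame, lies in the circuit lattice (respectively the cocircuit lattice) and has zero $e$-coordinate; Lemma~\ref{lem:regularcircuitdecomposition} then decomposes it into positive signed circuits (respectively cocircuits) supported in $E\setminus\{e\}$, and these decomposed reversals splice in as the replacement. The mixed case, where $R_s$ is a circuit and $R_t$ a cocircuit or vice versa, is where Lemmas~\ref{Lem:ExtendCC} and \ref{Lem:AlmostPosCC} come in: one observes that $_{-e}R_t$ is an almost-positive cocircuit in the sense of Lemma~\ref{Lem:AlmostPosCC}, and so any positive circuit through $e$ (in particular $R_s$) must meet $R_t$ in exactly two elements, isolating a specific element $f\in R_s\cap R_t\setminus\{e\}$. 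The element $f$ is then used, together with the extension provided by Lemma~\ref{Lem:ExtendCC}, to build an explicit $e$-avoiding replacement sequence. The main obstacle is this mixed case: the interplay between circuit and cocircuit reversals requires carefully combining both new lemmas, whereas the same-type cases reduce directly to the decomposition result Lemma~\ref{lem:regularcircuitdecomposition}.
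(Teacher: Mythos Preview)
Your reduction to ``if $\mathcal{O}_1\sim\mathcal{O}_2$ agree on $e$ then $_{-e}\mathcal{O}_1\sim{}_{-e}\mathcal{O}_2$'' is exactly right, and matches the paper. But the paper's argument from there is a single global step, not an induction on a reversal sequence: since circuit reversals act only on the circuit part of $E$ and cocircuit reversals only on the cocircuit part (these parts are disjoint by Proposition~\ref{prop:orientdecomp} and invariant under reversals), the set where $\mathcal{O}_1,\mathcal{O}_2$ differ splits into a piece in the circuit part and a piece in the cocircuit part. Applying Lemma~\ref{lem:regularcircuitdecomposition} (and its dual) to each piece writes the difference as a disjoint union of positive circuits and positive cocircuits, all avoiding $e$; reversing these one by one carries $_{-e}\mathcal{O}_1$ to $_{-e}\mathcal{O}_2$. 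No sequence manipulation is needed.

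Your inductive scheme, by contrast, has real gaps. In the same-type case you decompose $R_s+R_t$ via Lemma~\ref{lem:regularcircuitdecomposition}, but $R_s+R_t$ is only a $\{0,\pm1,\pm2\}$-vector, so the pieces need not be disjoint; and even once decomposed, you have not argued that the resulting signed circuits are \emph{positive} in the intermediate orientations where you want to insert them---this is exactly the ``transport through $S_1,\ldots,S_m$'' you flag as needing care, and it does not come for free since the $S_i$ may overlap $R_s$ and $R_t$ away from $e$. The mixed case is worse: you gesture at Lemmas~\ref{Lem:ExtendCC} and~\ref{Lem:AlmostPosCC}, but those lemmas are deployed in the paper to prove the \emph{next} proposition (commutativity of the action of two different elements), where the setup genuinely produces an almost-positive cocircuit. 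Here there is no such structure to exploit, and your sketch does not indicate how an $e$-avoiding replacement sequence would actually be built. The missing idea is to abandon the sequence entirely and decompose the global difference $\mathcal{O}_1-\mathcal{O}_2$ directly.
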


\begin{proof}
Suppose $\mathcal{O}\sim\mathcal{O}'$ and they agree on $e$, then $\mathcal{O}$ and $\mathcal{O}'$ differ by a disjoint union of positive circuits and cocircuits which do not contain $e$ by Lemma~ \ref{lem:circuitdecomposition} and its dual. Thus $_{-e}\mathcal{O}\sim_{-e}\mathcal{O}'$ using the same reversals.
\end{proof}

\begin{proposition} \label{Prop:commute_action}
For any $\overrightarrow{e},\overrightarrow{f}\in \mathbb{Z}^E$ and $[\mathcal{O}]\in\mathcal{G}(M)$, $\overrightarrow{e}\cdot(\overrightarrow{f}\cdot [\mathcal{O}])=\overrightarrow{f}\cdot(\overrightarrow{e}\cdot[\mathcal{O}])$. Hence it is valid to extend $\cdot$ linearly, and $\circlearrowright$ is indeed a group action of $\ZZ^E$ on $\mathcal{G}(M)$ .
\end{proposition}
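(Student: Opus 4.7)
The statement when $e=f$ (or when $\overrightarrow{e},\overrightarrow{f}$ are supported on the same element of $E$) is immediate from the definition of the action, since flipping $e$ twice in an appropriate representative is clearly unambiguous. Hence I assume throughout that $e\neq f$, and it suffices to treat the case where $\overrightarrow{e}=\pm u_e$ and $\overrightarrow{f}=\pm u_f$ are single signed generators.

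Using Propositions~\ref{Prop:Torsor_WellDef} and \ref{Prop:IndepRep}, I would first pick a representative $\mathcal{O}\in[\mathcal{O}]$ with $\mathcal{O}(e)=\overrightarrow{e}$. The proof then splits into two cases. In the \emph{easy case}, there exists $\widetilde{\mathcal{O}}\in[\mathcal{O}]$ satisfying $\widetilde{\mathcal{O}}(e)=\overrightarrow{e}$ \emph{and} $\widetilde{\mathcal{O}}(f)=\overrightarrow{f}$ simultaneously. By Proposition~\ref{Prop:IndepRep}, both $\overrightarrow{e}\cdot(\overrightarrow{f}\cdot[\mathcal{O}])$ and $\overrightarrow{f}\cdot(\overrightarrow{e}\cdot[\mathcal{O}])$ may be computed from $\widetilde{\mathcal{O}}$ using it as the common base representative for both orders of application, and both compositions yield the class $[_{-e,-f}\widetilde{\mathcal{O}}]$.

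In the \emph{hard case}, no such common representative exists; in particular, $\mathcal{O}(f)=-\overrightarrow{f}$. Apply Proposition~\ref{prop:orientdecomp} to the pair $(\mathcal{O},f)$: there is a positive circuit or positive cocircuit $K$ of $\mathcal{O}$ containing $f$. If $e\notin K$, then reversing $K$ in $\mathcal{O}$ preserves the orientation of $e$ and produces a common representative, contradicting the hard-case hypothesis. Hence $e\in K$. Let $\mathcal{O}_K$ be the result of reversing $K$ in $\mathcal{O}$; then $\mathcal{O}_K\in[\mathcal{O}]$ with $\mathcal{O}_K(e)=-\overrightarrow{e}$ and $\mathcal{O}_K(f)=\overrightarrow{f}$, so $\overrightarrow{f}\cdot[\mathcal{O}]=[_{-f}\mathcal{O}_K]$ and $\overrightarrow{e}\cdot[\mathcal{O}]=[_{-e}\mathcal{O}]$.

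The crux of the hard case is then to produce a representative of $[_{-e}\mathcal{O}]$ having $f$ oriented as $\overrightarrow{f}$, together with a representative of $[_{-f}\mathcal{O}_K]$ having $e$ oriented as $\overrightarrow{e}$, in such a way that the two resulting classes $[_{-f}\widetilde{\mathcal{O}}]$ and $[_{-e}\widetilde{\mathcal{O}}']$ can be explicitly identified. To do this I would apply Proposition~\ref{prop:orientdecomp} to $(_{-e}\mathcal{O},e)$ to find a positive circuit $C$ through $e$ in $_{-e}\mathcal{O}$ (using that $K$, viewed in $_{-e}\mathcal{O}$, is no longer positive, so $e$ must acquire a compensating positive circuit), and then invoke Lemma~\ref{Lem:AlmostPosCC} with $Y=K$ (which is a positive cocircuit of $_{-e}(_{-e}\mathcal{O})=\mathcal{O}$) to conclude that $C$ meets $K$ in exactly two elements. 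Arguing that this second element of $C\cap K$ can be taken to be $f$ (else iterate, using Lemma~\ref{Lem:ExtendCC} to extend (co)circuits of minors to (co)circuits of $M$ while controlling the extra element), reversing $C$ in $_{-e}\mathcal{O}$ produces the desired representative $\widetilde{\mathcal{O}}$, and a symmetric construction produces $\widetilde{\mathcal{O}}'$. A direct check then shows that $_{-f}\widetilde{\mathcal{O}}$ and $_{-e}\widetilde{\mathcal{O}}'$ differ by reversing $K$, hence represent the same class in $\mathcal{G}(M)$.

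The main obstacle is unquestionably the hard case. Naively one wants to translate the problem into the abelian group $\Jac(M)$, where the commutativity of generator translations is automatic, but doing so presupposes the injectivity of the natural map $\mathcal{G}(M)\to \Jac(M)$, which is itself essentially the torsor statement we are building toward and hence unavailable at this stage. The genuine work is therefore the explicit matroidal bookkeeping sketched above, showing that the two intermediate orientations arising from the two orders of operations can be joined by an explicit sequence of (co)circuit reversals, controlled by Lemmas~\ref{Lem:ExtendCC} and \ref{Lem:AlmostPosCC} together with the dichotomy in Proposition~\ref{prop:orientdecomp}.
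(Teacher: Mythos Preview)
Your overall architecture matches the paper: split into an easy case (a common representative exists) and a hard case, and in the hard case use Proposition~\ref{prop:orientdecomp} together with Lemmas~\ref{Lem:ExtendCC} and \ref{Lem:AlmostPosCC} to build an explicit chain of (co)circuit reversals. The easy case and the final verification are fine.

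The hard case, however, has a genuine gap. After finding $K$ (say a cocircuit, by duality) through $f$ and $e$, you apply Proposition~\ref{prop:orientdecomp} to $(_{-e}\mathcal{O},e)$ and assert that $e$ lies in a positive \emph{circuit} $C$ of $_{-e}\mathcal{O}$. The parenthetical justification (``$K$ is no longer positive, so $e$ must acquire a compensating positive circuit'') does not work: nothing prevents $e$ from lying in a \emph{different} positive cocircuit of $_{-e}\mathcal{O}$. More seriously, even granting that such a $C$ exists, Lemma~\ref{Lem:AlmostPosCC} only gives $|C\cap K|=2$; you still need the second element to be $f$, and your ``else iterate'' is not a proof---there is no evident monovariant forcing the iteration to terminate at $f$.

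The paper sidesteps both problems by reversing the roles in the construction of the second (co)circuit. Having assumed (WLOG) that $e,f$ lie in the circuit part of $\mathcal{O}$ and fixed a positive circuit $C$ through $f$ (hence through $e$), it applies Proposition~\ref{prop:orientdecomp} not to $_{-e}\mathcal{O}$ but to the \emph{deletion} $\mathcal{O}\setminus e$, at the element $f$. Since any positive circuit of $\mathcal{O}\setminus e$ through $f$ would be a positive circuit of $\mathcal{O}$ through $f$ avoiding $e$ (contradicting the hard-case hypothesis), $f$ must lie in a positive cocircuit $D'$ of $\mathcal{O}\setminus e$. Lemma~\ref{Lem:ExtendCC} then forces $D:=D'\cup\{e\}$ to be a cocircuit with $_{-e}D$ positive. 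Now both $C$ and $D$ contain $e$ \emph{and} $f$ by construction, so Lemma~\ref{Lem:AlmostPosCC} immediately gives $C\cap D=\{e,f\}$, and the explicit reversal computation goes through with no iteration. The moral: build the second (co)circuit through $f$ in the minor $M\setminus e$, not through $e$ in $_{-e}\mathcal{O}$.

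A minor point: the case $\overrightarrow{e}=-\overrightarrow{f}$ (same underlying element, opposite signs) is not quite ``immediate''---one must check that both composites return $[\mathcal{O}]$, which the paper does via a one-line (co)circuit reversal.
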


\begin{proof}
The statement is tautological if $\overrightarrow{e}=\overrightarrow{f}$. If $\overrightarrow{e}=-\overrightarrow{f}$, then without loss of generality the orientation of $e$ in $\mathcal{O}$ is $\overrightarrow{e}$.  Let $C$ be a positive (co)circuit containing $e$. Then $\overrightarrow{f}\cdot(\overrightarrow{e}\cdot[\mathcal{O}])
=[\mathcal{O}]=[_{-C}\mathcal{O}]=\overrightarrow{e}\cdot(\overrightarrow{f}\cdot [\mathcal{O}])$.

Otherwise $e\neq f$. We may again assume that $e$ is oriented as $\overrightarrow{e}$ in $\mathcal{O}$. The statement is easy if there exists some positive (co)circuit in $\mathcal{O}$ that contains $f$ but not $e$, as we can reverse it and obtain an orientation in which the orientations of $e,f$ are already $\overrightarrow{e},\overrightarrow{f}$.
So without loss of generality $e,f$ are in the circuit part of $\mathcal{O}$ and every positive circuit containing $f$ also contains $e$; fix any such positive circuit $C$. $f$ must be in some positive cocircuit $D'$ of $\mathcal{O}-e$, since otherwise $f$ is in some positive circuit of $\mathcal{O}-e$, which is a positive circuit in $\mathcal{O}$ avoiding $e$. By Lemma~\ref{Lem:ExtendCC},  $D:=D'\cup\{e\}$ is a cocircuit in $\mathcal{O}$ and $_{-e}D$ is positive, and by Lemma \ref{Lem:AlmostPosCC}, we know that $C\cap D=\{e,f\}$.

On one hand we have $\overrightarrow{f}\cdot(\overrightarrow{e}\cdot[\mathcal{O}])
=\overrightarrow{f}\cdot[_{-e}\mathcal{O}]
=\overrightarrow{f}\cdot[_{-(D-e)}\mathcal{O}]
=[_{-(D-\{e,f\})}\mathcal{O}]$. On the other hand, 
$\overrightarrow{e}\cdot(\overrightarrow{f}\cdot[\mathcal{O}])
=\overrightarrow{e}\cdot(\overrightarrow{f}\cdot[_{-C}\mathcal{O}])
=\overrightarrow{e}\cdot[_{-(C-f)}\mathcal{O}]
=\overrightarrow{e}\cdot[_{-(C\cup D-e)}\mathcal{O}]=[_{-(C\cup D)}\mathcal{O}]$. But $C$ is positive in $_{-(C\cup D)}\mathcal{O}$, so $[_{-(C\cup D)}\mathcal{O}]=[_{-(C\cup D)\triangle C}\mathcal{O}]=[_{-(D-\{e,f\})}\mathcal{O}]$.
\end{proof}

Now we know that $\mathbb{Z}^E\circlearrowright\mathcal{G}(M)$ is well-defined, so we show next that this  action descends to a group action $\Jac(M)\circlearrowright\mathcal{G}(M)$.

\begin{proposition} \label{Prop:JacActWD}
The stabilizer of the action on any $[\mathcal{O}]$ contains $\Lambda_A(M)\oplus \Lambda_A^*(M)$.
\end{proposition}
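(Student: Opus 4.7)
The plan is to reduce the stabilizer claim to the case of a single signed circuit. By Proposition~\ref{Prop:commute_action} the action of $\ZZ^E$ on $\mathcal{G}(M)$ is abelian, and by Lemma~\ref{lem:regularcircuitdecomposition} the lattice $\Lambda_A(M)$ is generated as a $\ZZ$-module by signed circuits (and, analogously, $\Lambda_A^*(M)$ by signed cocircuits). Since reversing a signed circuit in $M$ is the same move as reversing a signed cocircuit in the dual matroid $M^*$, we have $\mathcal{G}(M) = \mathcal{G}(M^*)$, so it suffices to prove that $v_C := \sum_{e} C(e)\overrightarrow{e}$ stabilizes every class $[\mathcal{O}]$ for each signed circuit $C$; the cocircuit case then follows by applying the same argument to $M^*$.

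Fix such $C$ and $\mathcal{O}$, and partition $\mathrm{supp}(C) = S \sqcup T$ with $S = \{e \in \mathrm{supp}(C) : \mathcal{O}(e) = C(e)\}$ and $T = \mathrm{supp}(C) \setminus S$. I will compute $v_C \cdot [\mathcal{O}]$ in two phases, applying the generators $C(e)\overrightarrow{e}$ in a chosen order by invoking the commutativity of the action. In the first phase, apply $C(e)\overrightarrow{e}$ for each $e \in S$ in turn, using the current representative starting from $\mathcal{O}$; at every step the element $e$ is still oriented as $C(e)$ in that representative (previous flips affected only other elements of $S$), so the action is realized by directly flipping $e$. The working representative at the end of the first phase is $\mathcal{O}_1 := {}_{-S}\mathcal{O}$, and the crucial observation is that $-C$ is now a positive circuit of $\mathcal{O}_1$: the elements of $S$ have just been flipped to $-C(e)$, while the elements of $T$ already satisfied $\mathcal{O}(e) = -C(e)$.

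Now perform a circuit reversal, replacing $\mathcal{O}_1$ by $\mathcal{O}_2 := {}_{-\mathrm{supp}(C)}\mathcal{O}_1 = {}_{-T}\mathcal{O}$. This keeps us inside the same circuit--cocircuit reversal class, so by Proposition~\ref{Prop:IndepRep} the computation of $v_C \cdot [\mathcal{O}]$ may continue using $\mathcal{O}_2$ as the representative. Every element of $\mathrm{supp}(C)$ is oriented as $C(e)$ in $\mathcal{O}_2$, so the second phase---applying $C(e)\overrightarrow{e}$ for each $e \in T$---again consists of direct flips, yielding the final representative ${}_{-T}\mathcal{O}_2 = {}_{-T}({}_{-T}\mathcal{O}) = \mathcal{O}$. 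Thus $v_C \cdot [\mathcal{O}] = [\mathcal{O}]$.

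The main conceptual step is the intermediate circuit reversal from $\mathcal{O}_1$ to $\mathcal{O}_2$: it exchanges the roles of $S$ and $T$ without leaving the class, converting the ``disagreeing'' set $T$ into a set whose elements can be flipped directly. A na\"ive approach that tries to find a single representative in $[\mathcal{O}]$ making all of $\mathrm{supp}(C)$ simultaneously compatible with $C$ can fail (as one already sees in the triangle matroid, where for some classes no such representative exists), so the two-phase structure with a circuit reversal separating the phases is precisely what produces the exact cancellation back to $\mathcal{O}$.
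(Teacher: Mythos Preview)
Your proof is correct and follows essentially the same approach as the paper's own argument. The paper also reduces to a single signed circuit, splits its support according to agreement with $\mathcal{O}$ (your $S$ is their $F$), applies the generators in $S$ directly to reach ${}_{-F}\mathcal{O}$, uses the circuit reversal ${}_{-F}\mathcal{O}\sim{}_{-(C\setminus F)}\mathcal{O}$, and then applies the remaining generators; your write-up simply spells out the justification for each step in more detail.
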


\begin{proof}
Let $\overrightarrow{C}\in \Lambda_A(M)$ be a signed circuit. Let $F$ be the set of elements in $C$ whose orientations in $\mathcal{O}$ are the same as in $\overrightarrow{C}$. Then $\overrightarrow{C}\cdot[\mathcal{O}]=(\sum_{\overrightarrow{e}\in\overrightarrow{C}\setminus F}\overrightarrow{e})\cdot[_{-F}\mathcal{O}]=(\sum_{\overrightarrow{C}\setminus F}\overrightarrow{e_i})\cdot[_{-(C-F)}\mathcal{O}]=[\mathcal{O}]$. The proof for $\Lambda_A^*(M)$ is similar.
\end{proof}

\subsection{The action is simply transitive}

\begin{proposition} \label{Prop:TransTorsor}
The group action $\Jac(M)\circlearrowright\mathcal{G}(M)$ is transitive.
\end{proposition}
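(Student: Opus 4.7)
The plan is to show directly that the $\ZZ^E$-action on $\mathcal{G}(M)$ is already transitive; since this action descends to $\Jac(M)$ by Proposition~\ref{Prop:JacActWD}, transitivity of the $\Jac(M)$-action follows immediately.

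Given two classes $[\mathcal{O}], [\mathcal{O}'] \in \mathcal{G}(M)$, I would pick any discrete orientations $\mathcal{O}, \mathcal{O}'$ representing them and let $F := \{e \in E : \mathcal{O}(e) \neq \mathcal{O}'(e)\}$ be the symmetric-difference set. For each $e \in F$, let $\overrightarrow{e}_\mathcal{O}$ denote the generator $u_e$ of $\ZZ^E$ decorated with the sign $\mathcal{O}(e) \in \{\pm 1\}$, and set
\[
\gamma := \sum_{e \in F} \overrightarrow{e}_\mathcal{O} \in \ZZ^E.
\]
The key claim is that $\gamma \cdot [\mathcal{O}] = [\mathcal{O}']$, which I would verify by a straightforward induction.

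To carry this out, enumerate $F = \{e_1, \ldots, e_k\}$ in any order. In $\mathcal{O}$ itself, the element $e_1$ is already oriented as $\overrightarrow{e_1}_\mathcal{O}$, so by definition of the action $\overrightarrow{e_1}_\mathcal{O} \cdot [\mathcal{O}] = [{}_{-e_1}\mathcal{O}]$ (no change of representative is needed). The orientation ${}_{-e_1}\mathcal{O}$ still has $e_2$ oriented as in $\mathcal{O}$, since only $e_1$ was flipped, so applying $\overrightarrow{e_2}_\mathcal{O}$ yields $[{}_{-\{e_1,e_2\}}\mathcal{O}]$; continuing inductively, and using Proposition~\ref{Prop:commute_action} to justify that the order of application is immaterial, we obtain
\[
\gamma \cdot [\mathcal{O}] \;=\; [{}_{-F}\mathcal{O}] \;=\; [\mathcal{O}'].
\]

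I do not anticipate any substantial obstacle, since all of the technical machinery has been established in the preceding propositions of Section~\ref{sec:Torsor}. The essential content of transitivity is simply that the raw action of $\ZZ^E$ on the set $\{\pm 1\}^E$ of discrete orientations (by coordinatewise sign flips) is tautologically transitive, and this transitivity passes without loss through the quotient by the circuit-cocircuit reversal equivalence relation. The only substantive thing to check is that each generator in the sum can be applied without first performing a nontrivial reversal to reach a good representative, which is guaranteed by our deliberate choice of signs in $\gamma$.
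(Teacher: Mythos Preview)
Your proposal is correct and follows essentially the same approach as the paper: define $\gamma$ as the sum of the oriented elements on which $\mathcal{O}$ and $\mathcal{O}'$ differ (with the sign taken from $\mathcal{O}$), and observe that $\gamma\cdot[\mathcal{O}]=[\mathcal{O}']$. The paper compresses this into a single sentence, whereas you have unpacked the inductive step and noted why no preliminary reversal is needed, but the underlying argument is identical.
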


\begin{proof}
Given any two orientations $\mathcal{O},\mathcal{O}'$, let $\gamma$ be the sum of the (oriented) elements in $\mathcal{O}$ whose orientation in $\mathcal{O}'$ is different, then $[\gamma]\cdot[\mathcal{O}]=[\mathcal{O}']$. 
\end{proof}

By Proposition \ref{Prop:JacActWD} and Proposition \ref{Prop:TransTorsor}, we know that $\Jac(M)\circlearrowright\mathcal{G}(M)$ is well-defined and transitive, and we know that $|\Jac(M)|=|\mathcal{G}(M)|$, so the action is automatically simple.  However, it seems worthwhile to give a direct proof of the simplicity of the action which does not make use of the equality $|\Jac(M)|=|\mathcal{G}(M)|$, since this yields an independent and ``bijective'' proof of the equality.  We begin with the following reduction.

\begin{proposition}
The simplicity of the group action $\Jac(M)\circlearrowright\mathcal{G}(M)$ is equivalent to the statement that every element of the quotient group $\frac{\mathbb{Z}^E}{\Lambda_A(M)\oplus \Lambda_A^*(M)}$ contains a coset representative whose coefficients are all $1,0,-1$.
\end{proposition}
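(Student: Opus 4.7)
The plan is to prove both implications directly from the definitions, using Proposition~\ref{Prop:TransTorsor} in the forward direction and the vectorial description of circuit/cocircuit reversals in the converse direction.

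For the forward implication ($\Rightarrow$), I would start with an arbitrary element $[\gamma_0]\in\frac{\mathbb{Z}^E}{\Lambda_A(M)\oplus\Lambda_A^*(M)}$ and fix an arbitrary orientation $\mathcal{O}$. Applying the action of $[\gamma_0]$ (which is well-defined by Proposition~\ref{Prop:JacActWD}) produces an orientation $\mathcal{O}'$ with $[\gamma_0]\cdot[\mathcal{O}]=[\mathcal{O}']$. The construction in the proof of Proposition~\ref{Prop:TransTorsor} then exhibits a $\{-1,0,1\}$-valued vector $\gamma_1$---namely the sum of those oriented edges in $\mathcal{O}$ whose orientation in $\mathcal{O}'$ is reversed---that also satisfies $[\gamma_1]\cdot[\mathcal{O}]=[\mathcal{O}']$. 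Simplicity of the action then forces $[\gamma_0]=[\gamma_1]$, so $\gamma_1$ is the required $\{-1,0,1\}$-representative.

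For the converse ($\Leftarrow$), I would suppose $[\gamma]\cdot[\mathcal{O}]=[\mathcal{O}]$ and replace $\gamma$ by a $\{-1,0,1\}$-representative $\gamma'=\sum_{e}\epsilon_e\overrightarrow{e}$ of the same class in the quotient. Unpacking the linearly extended action, I would use Proposition~\ref{Prop:Torsor_WellDef} repeatedly to find an orientation $\tilde{\mathcal{O}}\in[\mathcal{O}]$ for which $\tilde{\mathcal{O}}(e)=\epsilon_e$ on the support of $\gamma'$; then $[\gamma']\cdot[\mathcal{O}]$ is represented by the orientation $\tilde{\mathcal{O}}'$ obtained by flipping $\tilde{\mathcal{O}}$ on the support of $\gamma'$. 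The hypothesis gives $\tilde{\mathcal{O}}\sim\tilde{\mathcal{O}}'$ in the circuit-cocircuit reversal system, hence $\frac{1}{2}(\tilde{\mathcal{O}}-\tilde{\mathcal{O}}')$ is an integral sum of signed circuits and signed cocircuits (each reversal modifies an orientation by twice a signed circuit or cocircuit). A direct sign check shows that this vector equals $\gamma'$ exactly, so $\gamma'\in\Lambda_A(M)+\Lambda_A^*(M)$, and by orthogonality the sum is direct, giving $[\gamma]=0$ in the quotient.

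The routine content is essentially the translation between reversals and lattice elements; the main subtlety---and the only place where care is required---is the sign bookkeeping in the converse direction, ensuring that the orientation $\tilde{\mathcal{O}}$ can be chosen compatibly with $\gamma'$ on the whole support of $\gamma'$ simultaneously (this uses Proposition~\ref{Prop:commute_action} to see that the action does not depend on the order in which the generators $\overrightarrow{e}$ act) and that the resulting difference $\frac{1}{2}(\tilde{\mathcal{O}}-\tilde{\mathcal{O}}')$ is literally $\gamma'$ rather than merely agreeing with $\gamma'$ up to sign on each coordinate.
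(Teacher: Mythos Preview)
Your forward direction is essentially identical to the paper's. The converse direction, however, has a genuine gap.

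You assert that by iterating Proposition~\ref{Prop:Torsor_WellDef} one can find $\tilde{\mathcal{O}}\in[\mathcal{O}]$ with $\tilde{\mathcal{O}}(e)=\epsilon_e$ simultaneously for all $e$ in the support $F$ of $\gamma'$. This is false in general. Proposition~\ref{Prop:Torsor_WellDef} handles one element at a time, and Proposition~\ref{Prop:commute_action} (commutativity of the action) says nothing about the existence of such a simultaneous representative. For a concrete obstruction, take $M=U_{1,2}$ (two parallel elements); the class $\{(+,+),(-,-)\}$ contains no orientation with signs $(+,-)$, yet $\gamma'=e_1-e_2$ is a perfectly good $\{-1,0,1\}$-vector (indeed it lies in $\Lambda_A(M)$, so $[\gamma']=0$ and does fix this class).

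The paper avoids this by inserting one extra line: since the action is transitive (Proposition~\ref{Prop:TransTorsor}), an element $[\gamma]$ fixing one class fixes every class. One may therefore choose $\mathcal{O}$ to be \emph{any} orientation whatsoever whose restriction to $F$ agrees with $\gamma'$---such an $\mathcal{O}$ trivially exists, with no constraint on which reversal class it lies in. Then $[\gamma']\cdot[\mathcal{O}]=[{}_{-F}\mathcal{O}]=[\mathcal{O}]$ exactly as you compute, and your remaining argument (that $\frac{1}{2}(\mathcal{O}-{}_{-F}\mathcal{O})=\gamma'\in\Lambda_A(M)\oplus\Lambda_A^*(M)$) goes through. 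So the fix is a single sentence, but without it the step ``find $\tilde{\mathcal{O}}\in[\mathcal{O}]$'' does not stand.
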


\begin{proof}
Suppose such a set of coset representatives exists. We need to show that whenever $[\gamma]\in\frac{\mathbb{Z}^E}{\Lambda_A(M)\oplus \Lambda_A^*(M)}$ fixes some circuit-cocircuit reversal class, $[\gamma]=[0]$. By transitivity, $[\gamma]$ will fix every equivalence class in such a case. Without loss of generality, the coefficients of $\gamma$ are all $1,0,-1$ with support $F\subset E$. Pick an orientation $\mathcal{O}$ in which the orientation of every element of $F$ agrees with $\gamma$; then $[\mathcal{O}]=[\gamma]\cdot[\mathcal{O}]=[_{-F}\mathcal{O}]$. Therefore $\mathcal{O}\sim\!_{-F}\mathcal{O}$, meaning that $F$ is a disjoint union of positive circuits and cocircuits in $\mathcal{O}$, i.e., $\gamma\in \Lambda_A(M)\oplus \Lambda_A^*(M)$ and $[\gamma]=[0]$. The proof of the other direction is omitted as it is not being used in this paper.
\end{proof}

\begin{proposition} \label{Prop:JacRep}
Every element of $\frac{\mathbb{Z}^E}{\Lambda_A(M)\oplus \Lambda_A^*(M)}$ contains a coset representative whose coefficients are all $1,0,-1$.
\end{proposition}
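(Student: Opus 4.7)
The plan is to find, for any $\gamma \in \mathbb{Z}^E$, a representative $\gamma_0 \in \{0,1\}^E$ of the coset $[\gamma] \in \Jac(M)$; this is a fortiori in $\{-1,0,1\}^E$ and hence suffices. The strategy is geometric: first shift $\gamma$ by some $d \in \Lambda_A^*(M)$ so that its projection $\pi_{V^*}(\gamma - d)$ onto the cocircuit space $V^*(M)$ lies in the row zonotope $\widetilde{Z_A}$, and then lift this projection to an integer $\{0,1\}$-vector modulo $\Lambda_A(M)$ using total unimodularity of $A$.

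The key geometric ingredient is the covering statement that every point of $V^*(M)$ can be translated by some $d \in \Lambda_A^*(M)$ into $\widetilde{Z_A}$. This is classical for zonotopes built from totally unimodular configurations, and is in fact a tiling: combining Theorem~\ref{thm:zonotopedecomp} (which subdivides $\widetilde{Z_A}$ into $|B(M)|$ parallelepipeds of volume $1/\sqrt{\det(AA^T)}$ in $V^*(M)$) with the identity $|B(M)| = \det(AA^T)$ (Cauchy--Binet applied to the totally unimodular $A$) gives $\operatorname{vol}(\widetilde{Z_A}) = \sqrt{\det(AA^T)} = \operatorname{covol}(\Lambda_A^*(M))$. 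Alternatively, the tiling may be deduced from the Bohne--Dress theorem alluded to in \S1.8. Applying the covering to $\pi_{V^*}(\gamma)$ produces $d \in \Lambda_A^*(M)$ with $\pi_{V^*}(\gamma) - d = \pi_{V^*}(\gamma - d) \in \widetilde{Z_A}$.

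Since $\widetilde{Z_A} = \pi_{V^*}([0,1]^E)$ by definition, there exists $\alpha \in [0,1]^E$ with $A\alpha = A(\gamma - d)$; note $A(\gamma - d) \in \mathbb{Z}^r$ because $d = A^T z$ gives $Ad = AA^T z$. By total unimodularity of $A$ (Hoffman--Kruskal), the polytope $P := \{\alpha' \in [0,1]^E : A\alpha' = A(\gamma - d)\}$ has integer vertices, so we may pick $\gamma_0 \in P \cap \{0,1\}^E$. Setting $c := (\gamma - d) - \gamma_0$, we have $Ac = 0$ and $c \in \mathbb{Z}^E$, hence $c \in \ker A \cap \mathbb{Z}^E = \Lambda_A(M)$. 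Therefore $\gamma = \gamma_0 + c + d$ with $c \in \Lambda_A(M)$ and $d \in \Lambda_A^*(M)$, giving $[\gamma] = [\gamma_0]$ in $\Jac(M)$.

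The main obstacle in this plan is establishing the covering/tiling claim of the second paragraph in a clean, self-contained way. While very standard in zonotope theory, providing a derivation entirely within the paper's own framework would require extending the zonotopal subdivision $\widetilde{\Sigma}$ periodically via $\Lambda_A^*(M)$-translates and checking, using the facet incidence description from the remark following Theorem~\ref{thm:zonotopedecomp}, that neighboring translated cells glue consistently to cover $V^*(M)$ without overlap. Once this tiling is in hand, the rest of the argument is short and uses only the total unimodularity of $A$.
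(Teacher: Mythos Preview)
Your approach is genuinely different from the paper's, and the two are worth contrasting.

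The paper's proof is a short combinatorial induction: given $\gamma = \sum_e c_e e$, it does lexicographic induction on $(|\gamma|_\infty,\ \#\{e : |c_e| = |\gamma|_\infty\})$. If $|\gamma|_\infty > 1$, flip signs so all $c_e \ge 0$, pick $e$ with $c_e = |\gamma|_\infty$, find a positive circuit or cocircuit $C$ through $e$ (Proposition~\ref{prop:orientdecomp}), and subtract $\gamma_C := \sum_{f\in C} f$. This decreases the induction parameter and stays in the same coset. No geometry, no tiling, no Hoffman--Kruskal; just Proposition~\ref{prop:orientdecomp} and the definition of $\Lambda_A(M)\oplus\Lambda_A^*(M)$. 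It is entirely self-contained and also yields the polynomial-time refinement used in \S\ref{computingstuff}.

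Your route is geometric: push $\pi_{V^*}(\gamma)$ into $\widetilde{Z_A}$ by a $\Lambda_A^*(M)$-translate, then lift via total unimodularity. Once the covering is granted, the rest is clean and even gives the slightly stronger conclusion of a $\{0,1\}$-representative. But the covering step is a real gap in the write-up as it stands. The volume equality $\operatorname{vol}(\widetilde{Z_A}) = \operatorname{covol}(\Lambda_A^*(M))$ that you derive is only a \emph{necessary} condition for tiling, not sufficient, and it does not by itself imply covering either. You acknowledge this honestly in your last paragraph, and your sketch (extend $\widetilde{\Sigma}$ periodically and verify the boundary gluing using the facet/cocircuit description) is a reasonable plan, but carrying it out is nontrivial: one must check that opposite facets of $\widetilde{Z_A}$ differ by a vector of $\Lambda_A^*(M)$ and that the induced cell structures match. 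Invoking Bohne--Dress would import machinery well beyond what the proposition warrants.

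In short: your argument is correct modulo the tiling claim, but that claim is doing most of the work, and the paper's three-line induction avoids it entirely.
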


\begin{proof}
We will show that there is such a representative in $[\gamma]$ for every $\gamma=\sum_{e\in E} c_ee\in \mathbb{Z}^E$ by lexicographic induction on $|\gamma|_\infty:=\max_{e\in E}|c_e|$ and the number of elements $e$ with $|c_e|=|\gamma|_\infty$. The assertion is trivial if $|\gamma|_\infty\leq 1$, so suppose $|\gamma|_\infty>1$. By choosing a suitable reference orientation we may assume that all coefficients of $\gamma$ are non-negative. Pick an element $e$ whose coefficient $c_e$ equals $|\gamma|_\infty$ and pick a positive (co)circuit $C$ containing $e$. By subtracting $\gamma_C:=\sum_{f\in C} f$ from $\gamma$, all positive coefficients $c_f$ with $f\in C$ decrease by 1, while the zero coefficients become $-1$. Hence $|\gamma-\gamma_C|_\infty\leq|\gamma|_\infty$ and the number of elements $f$ with $|c_f|=|\gamma|_\infty$ strictly decreases. By our induction hypothesis, there exists a representative with the desired form in $[\gamma-\gamma_C]=[\gamma]$.
\end{proof}

\begin{corollary} \label{Prop:SimpleTorsor}
The group action $\Jac(M)\circlearrowright\mathcal{G}(M)$ is simple.
\end{corollary}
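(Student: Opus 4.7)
The plan is to observe that Corollary~\ref{Prop:SimpleTorsor} follows immediately by combining the two preceding results. The unnamed proposition directly before Proposition~\ref{Prop:JacRep} establishes the equivalence
\[
\text{simplicity of } \Jac(M) \circlearrowright \mathcal{G}(M) \;\Longleftrightarrow\; \text{every class in } \tfrac{\mathbb{Z}^E}{\Lambda_A(M)\oplus \Lambda_A^*(M)} \text{ has a } \{-1,0,1\}\text{-representative},
\]
and Proposition~\ref{Prop:JacRep} establishes the right-hand side of this equivalence. So the proof is literally: by Proposition~\ref{Prop:JacRep}, the criterion in the preceding proposition is satisfied, hence the action is simple.

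Since this is a one-line deduction there is essentially no obstacle to overcome here; all the genuine work has been done in Proposition~\ref{Prop:JacRep} (via the lexicographic induction on $|\gamma|_\infty$ and the number of coordinates achieving this maximum, using a positive circuit or cocircuit through a coordinate of maximum absolute value to reduce) and in the equivalence proposition (which in one direction uses the definition of the action by single-element reversals, and in the other direction uses Proposition~\ref{Prop:TransTorsor} together with a disjoint circuit/cocircuit decomposition to conclude $\gamma \in \Lambda_A(M)\oplus \Lambda_A^*(M)$). So the proposed write-up is just to invoke the two results and state the conclusion, perhaps noting along the way that, combined with Proposition~\ref{Prop:TransTorsor}, this completes the proof of Theorem~\ref{Thm:Action} (equivalently Theorem~\ref{thm:torsor}), and in particular yields an independent bijective proof that $|\Jac(M)| = |\mathcal{G}(M)|$ as promised earlier in the paper.
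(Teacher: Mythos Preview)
Your proposal is correct and matches the paper's approach exactly: the paper states this corollary without proof because it follows immediately from the preceding equivalence proposition together with Proposition~\ref{Prop:JacRep}, which is precisely what you have written.
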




\subsection{Computability of the group action}\label{computingstuff}

We now show that the simply transitive action of $\Jac(M)$ on ${\mathcal G}(M)$ is efficiently computable.

\begin{proposition} \label{prop:groupactioncomputable}
The action of $\Jac(M)$ on ${\mathcal G}(M)$ can be computed in polynomial time, given a totally unimodular matrix $A$ representing $M$.
\end{proposition}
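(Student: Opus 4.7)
The plan is to compute $[\gamma]\cdot[\mathcal{O}]$ through a two-stage algorithm based on linear programming, in the spirit of Section~\ref{sec:bijectioncomputable}. First, I would pre-process $\gamma$ to produce a short representative $\tilde{\gamma}\in\{-1,0,1\}^E$ of $[\gamma]\in\Jac(M)$, whose existence is guaranteed by Proposition~\ref{Prop:JacRep}. To produce such a $\tilde{\gamma}$ in polynomial time, I would first reduce $\gamma$ modulo a Hermite normal form basis of $\Lambda_A+\Lambda_A^*$ (computable from $A$ by standard polynomial-time lattice algorithms), obtaining an intermediate representative of polynomial bit complexity; then apply the iterative circuit/cocircuit subtraction procedure from the proof of Proposition~\ref{Prop:JacRep}. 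Each subtraction step amounts to locating a positive circuit or cocircuit in the orientation given by the sign pattern of the current residue, which is a polynomial-time LP analogous to LP~(\ref{inverse_LP}) by total unimodularity of $A$.

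For the second stage, since the generators $[\overrightarrow{e}]$ commute by Proposition~\ref{Prop:commute_action}, the action $[\tilde{\gamma}]\cdot[\mathcal{O}]$ is the composition of the individual generator actions $[\tilde{\gamma}(e)\,\overrightarrow{e}]\cdot$ for $e\in\operatorname{supp}(\tilde{\gamma})$, in any order. For each such $e$, by Proposition~\ref{prop:orientdecomp} either $e$ already has the desired orientation $\tilde{\gamma}(e)$ in the current representative, or $e$ lies in a positive circuit or cocircuit of the current orientation; in the latter case, locating such a (co)circuit is a polynomial-time LP, and reversing it brings $e$ to the desired orientation, after which we reverse $e$ itself to produce the next representative. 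Since $|\operatorname{supp}(\tilde{\gamma})|\leq|E|$, this stage requires $O(|E|)$ polynomial-time LP calls, and correctness follows from the fact that $\tilde{\gamma}=\sum_{e}\tilde{\gamma}(e)\overrightarrow{e}$ in $\mathbb{Z}^E$ together with commutativity and well-definedness of the generator actions.

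The hard part will be establishing polynomial runtime for the first stage. A naive application of the iterative reduction from Proposition~\ref{Prop:JacRep} takes $O(|E|\cdot\|\gamma\|_\infty)$ steps, which can be exponential in the bit length of $\gamma$, and HNF pre-reduction alone bounds the bit complexity of the intermediate representative but not its infinity norm. Bridging this gap will require exploiting the TU structure of $A$ together with the specific structure of $\Lambda_A+\Lambda_A^*$---in particular, that circuit and cocircuit indicators form an efficiently computable generating set with $\{0,\pm 1\}$ entries---to guarantee that only polynomially many subtractions are needed after the HNF pre-reduction. Once this bound on the first stage is established, the second stage introduces no further complication, and the overall algorithm computes $[\gamma]\cdot[\mathcal{O}]$ in polynomial time.
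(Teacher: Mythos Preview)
Your overall architecture matches the paper's: reduce $\gamma$ to a $\{-1,0,1\}^E$ representative, then apply the generator actions one element at a time, each via a polynomial-time LP that locates a positive (co)circuit through the relevant element. The second stage is essentially identical to the paper's treatment and is fine.

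The genuine gap is exactly where you flag it, and you do not close it. The missing idea is a simple but decisive acceleration of the iterative reduction from Proposition~\ref{Prop:JacRep}: instead of subtracting a single copy of the (co)circuit indicator $\gamma_C$ at each step, subtract $\lfloor c_e/2\rfloor\,\gamma_C$ at once, where $c_e=\|\gamma\|_\infty$. Since $\gamma_C\in\{0,1\}^E$ (in the chosen reference orientation), no new coordinate can exceed $\lceil\|\gamma\|_\infty/2\rceil$ in absolute value after such a subtraction; hence after $O(m)$ steps the infinity norm halves, and after $O(m\log\|\gamma\|_\infty)$ steps it drops to at most~$1$. This is polynomial in the bit length of the intermediate representative, which is precisely what your HNF pre-reduction controls. (The paper actually uses a slightly different first reduction, replacing ${\bf y}\in\ZZ^r$ by ${\bf y}-(AA^T)\lfloor(AA^T)^{-1}{\bf y}\rfloor$ and then lifting via a basis of $M$; this yields an infinity-norm bound of $mr^2$ rather than merely polynomial bit complexity, but the paper also remarks that the halving trick alone suffices if one works directly in $\ZZ^E$, which is your setting.)

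In short, your proposal correctly isolates the bottleneck but leaves it unresolved; the halving trick is the one concrete ingredient you need to add to turn the outline into a proof.
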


\begin{proof}
First we show that computing the action of a generator $[\overrightarrow{e}]$ on a circuit-cocircuit reversal class can be done in polynomial time. To see this, note that by Proposition \ref{Prop:Torsor_WellDef}, it suffices to find a positive circuit/cocircuit containing a given element $e$ in $\mathcal{O}$. For positive circuits, this can be done by solving the integer program $\min({\bf 1}^Tv:Av=0, v_e=1, 0\leq v_i\leq 1, v_i\in\mathbb{Z})$ and take the support of the minimizer (if exists), but it is actually a linear program (thus polynomial time computable \cite{schrijver1986LP}) as $A$ is totally unimodular. The cocircuit case is similar.

It remains to show that it is possible to find, in polynomial time, a coset representative with small polynomial-size coefficients in each element of $\Jac(M)\cong\frac{\mathbb{Z}^E}{\Lambda_A(M)\oplus \Lambda_A^*(M)}$.
For the practical reason of generating random elements of $\Jac(M)$ (cf. \S\ref{sec:sam_algo}), we often start with a vector ${\bf y}\in\mathbb{Z}^r$ representing a coset of $\frac{\mathbb{Z}^r}{\Col_{\mathbb{Z}}(AA^T)}$, before lifting ${\bf y}$ to a vector $\gamma\in\mathbb{Z}^E$.
Thus we describe a two-step algorithm to in fact find a representative in $\mathbb{Z}^E$ where all coefficients belong to $\{ -1,0,1 \}$ (the existence of which is guaranteed by Proposition \ref{Prop:JacRep}), starting with an input vector in ${\bf y}\in\mathbb{Z}^r$
.

In step 1, replace ${\bf y}$ by ${\bf y}':={\bf y}-(AA^T)\lfloor (AA^T)^{-1}{\bf y}\rfloor$, where $\lfloor\ \rfloor$ is the coordinate-wise truncation. The new vector represents the same element in $\frac{\mathbb{Z}^r}{\Col_{\mathbb{Z}}(AA^T)}$, and it is equal to $(AA^T)((AA^T)^{-1}{\bf y}-\lfloor (AA^T)^{-1}{\bf y}\rfloor)$. Since $0\leq x-\lfloor x \rfloor<1$ and each coordinate of $AA^T$ is between $-m$ and $m$, the absolute value of each coordinate of ${\bf y}'$ is at most $mr$.  To work in $\frac{\mathbb{Z}^E}{\Lambda_A(M)\oplus \Lambda_A^*(M)}$, we solve the equation $A\gamma={\bf y}'$, which is a simple linear system; since $A$ is totally unimodular, the absolute value of each coefficient of $\gamma$ is at most $mr^2$.

In step 2, starting with an element $\gamma\in \mathbb{Z}^E$ we obtained in step 1. We apply the procedure described in Proposition \ref{Prop:JacRep} with some modification, namely that after choosing a positive (co)circuit $C$ which contains an element $e$ whose coefficient $c_e$ is maximum in $\gamma$, we subtract $\lfloor\frac{c_e}{2}\rfloor\gamma_C$ from $\gamma$. 
No new element with the absolute value of its coefficients being larger than $\lceil\frac{|\gamma|_\infty}{2}\rceil$ is created in each such step, so after every $O(m)$ steps the maximum absolute value of coefficients is halved, and in a total of $O(m\log m)$ steps the maximum absolute value of coefficients is reduced to at most 1. We remark that step 2 by itself can yield a polynomial time algorithm if we work in $\mathbb{Z}^E$ from the beginning.
\end{proof}

\subsection{An algorithm for sampling bases of a regular matroid} \label{sec:sam_algo}

By mimicking the strategy from \cite{baker2013chip}, we can now produce a polynomial-time algorithm for randomly sampling bases of a regular matroid.  The high-level strategy is:
\begin{enumerate}
\item Compute the Smith Normal Form of a matrix $A$ representing $M$, and decompose $\Jac(M)$ as a direct sum of finite abelian groups.
\item Use such a decomposition to choose a random element $\gamma \in {\rm Jac}(M)$.
\item Given a reference orientation ${\mathcal O}$, compute $[{\mathcal O}'] := \gamma \cdot [{\mathcal O}] \in {\mathcal G}(M)$, where $\cdot$ is the group action from Theorem \ref{thm:torsortheorem}.
\item Compute the basis $B$ corresponding to the $(\sigma,\sigma^*)$-compatible orientation in $[\mathcal{O}']$, which can be found in polynomial time by Proposition \ref{prop:CCMalgo}.
\end{enumerate}

\section{Dilations, the Ehrhart polynomial, and the Tutte polynomial}

Metric graphs can either be viewed as limits of subdivisions of discrete graphs or as intrinsic objects. See, for example, Section 2 of  \cite{baker2006metrized}.
Similarly, one can view continuous orientations of regular matroids as a limit of discrete orientations or as intrinsic objects.  So far in this paper we have taken the latter viewpoint, but in this section we shift towards the former.  In doing so, we will see that the bridge between discrete and continuous orientations of regular matroids is intimately related to Ehrhart theory for unimodular zonotopes.  For example, we demonstrate how this perspective allows for a new derivation of a result of Stanley which states that the Ehrhart polynomial of a unimodular zonotope is a specialization of the Tutte polynomial.  Stanley's original proof utilizes a half-open decomposition of a zonotopal tiling.  In contrast, zonotopal tilings will not make an appearance in our proof, although Corollary~\ref{coro:latticepoint_are_vertex} provides a connection to Stanley's argument.

\subsection{The Ehrhart polynomial and the Tutte polynomial}

The Tutte polynomial $T_M(x,y)$ is a bivariate polynomial associated to a matroid $M$ which encodes a wealth of information associated to $M$.  One of its key properties is that  $T_M(x,y)$ is ``universal'' with respect to deletion and contraction, in the following sense:

\begin{proposition}[see~{\cite[Theorem 1]{welsh2000potts}} and~{\cite[Theorem 2.16]{welsh1999tutte}}] \label{thm:gentutte}
Let $\mathbb{M}$ be the set of all matroids. Suppose $a,b,x_0,y_0 \in \RR$ and that $f\colon \mathbb{M} \to \RR$ is a function with $f(\emptyset)=1$ and such that for every matroid $M$ and every element $e$ of $M$,
\begin{align*}
f(M) &= af(M / e) + bf(G\setminus e) &\textrm{if $e$ is neither a loop nor a coloop}\\
f(M) &= x_0f(M \setminus e)  &\textrm{if $e$ is a coloop} \\
f(M) &= y_0f(M / e)  &\textrm{if $e$ is a loop.}\\
\end{align*}
Then  
\[f(M) = a^{rk(M)}b^{rk(M^*)}T_{M}(\frac{x_0}{a},\frac{y_0}{b}).\]
\end{proposition}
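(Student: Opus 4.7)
The argument is by induction on $|E(M)|$. For the base case $|E(M)|=0$ we have $M=\emptyset$, $rk(M)=rk(M^*)=0$, and $T_\emptyset = 1$, so both sides equal $f(\emptyset) = 1$. For the inductive step, fix $M$ with $|E(M)|\geq 1$, pick any $e \in E(M)$, and split into three cases according to whether $e$ is neither a loop nor a coloop, a coloop, or a loop.

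In the first case, the relevant rank identities are $rk(M\setminus e) = rk(M)$, $rk((M\setminus e)^*) = rk(M^*) - 1$, $rk(M/e) = rk(M) - 1$, and $rk((M/e)^*) = rk(M^*)$. Combining the given deletion-contraction recurrence for $f$ with the inductive hypothesis yields
\[
f(M) = a f(M/e) + b f(M\setminus e) = a^{rk(M)}b^{rk(M^*)}\bigl[T_{M/e}(x_0/a,\,y_0/b) + T_{M\setminus e}(x_0/a,\,y_0/b)\bigr],
\]
and the bracketed sum equals $T_M(x_0/a,\,y_0/b)$ by the standard Tutte deletion-contraction recurrence $T_M = T_{M/e} + T_{M\setminus e}$.

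When $e$ is a coloop, $rk(M\setminus e) = rk(M) - 1$ and $rk((M\setminus e)^*) = rk(M^*)$, so the inductive hypothesis combined with $f(M) = x_0 f(M\setminus e)$ gives
\[
f(M) = a^{rk(M)}b^{rk(M^*)} (x_0/a)\, T_{M\setminus e}(x_0/a,\,y_0/b) = a^{rk(M)}b^{rk(M^*)} T_M(x_0/a,\,y_0/b),
\]
using the recurrence $T_M(x,y) = x T_{M\setminus e}(x,y)$ for a coloop. The loop case is entirely dual, with the roles of $a$ and $b$, deletion and contraction, and the two Tutte variables exchanged.

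The proof is essentially bookkeeping, and the main potential obstacle is verifying that the rank exponents line up correctly under each deletion/contraction so that the prefactor $a^{rk(M)}b^{rk(M^*)}$ factors out cleanly in each of the three cases; this reduces to the elementary observations about rank changes recorded above together with their matroid-duality counterparts. A minor cosmetic point is that the stated identity only makes literal sense when $a, b \neq 0$; otherwise one should interpret it as the polynomial identity obtained by expanding $T_M(x,y) = \sum c_{ij} x^i y^j$ and noting that $i \leq rk(M)$ and $j \leq rk(M^*)$ for every nonzero term, so that all $a$'s and $b$'s in the denominators cancel.
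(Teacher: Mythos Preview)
Your induction argument is correct and is the standard proof of this universality property of the Tutte polynomial. The paper itself does not supply a proof of this proposition; it is stated with citations to Welsh's survey articles and used as a black box. So there is no approach to compare against---your write-up simply fills in the omitted (and well-known) argument.
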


Given an integer polytope $P$, its {\it Ehrhart polynomial} $E_P(q)$ counts the number of lattice points in  $qP$, the $q$-th dilate of $P$.  The fact that such a polynomial exists for any integer polytope was proven by Ehrhart \cite{ehrhart1962polynomial}.  Let $M$ be a regular matroid represented by the totally unimodular $r \times m$ matrix $A$.
Given a positive integer $q$, define $qA$ to be the $r \times qm$ matrix obtained by repeating each column of $A$ $q$ times consecutively.
Let $qM$ be the corresponding regular matroid.  
Note that the zonotope $Z_{qA}$ associated to $qA$ is just the $q$-th dilate $qZ_A$.

\medskip

Let $\sigma_q$ be an acyclic signature of $qM$. Using the interpretation of lattice points of $Z_{qA}$ as $\sigma_q$-compatible orientations of $qM$, we give a new proof of the following theorem.
 
\begin{theorem}[Stanley \cite{stanley1991ehrhart}]\label{Ehrhart}
Let $A$ be a totally unimodular matrix with associated zonotope $Z=Z_A$, and let $M$ be the corresponding regular matroid.  
Then
\[
E_Z(q) = q^{{\rm rk}(M)} T_{M}(1 + \frac{1}{q},1).
\]
\end{theorem}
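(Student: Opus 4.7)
The plan is to apply the universal deletion-contraction characterization of the Tutte polynomial (Proposition~\ref{thm:gentutte}) to $f(M) := E_{Z_A}(q)$, which for each fixed positive integer $q$ depends only on the matroid $M$ (any two totally unimodular matrices representing $M$ give unimodularly equivalent zonotopes by Lemma~\ref{lem:transformation}, and the Ehrhart polynomial is invariant under unimodular equivalence). I will verify the hypotheses of Proposition~\ref{thm:gentutte} with parameters $a = q$, $b = 1$, $x_0 = q+1$, and $y_0 = 1$, whereupon the conclusion
\[
f(M) \;=\; q^{\mathrm{rk}(M)} \cdot 1^{\mathrm{rk}(M^*)} \cdot T_M\!\left(\tfrac{q+1}{q},\,\tfrac{1}{1}\right) \;=\; q^{\mathrm{rk}(M)} T_M(1 + 1/q,\,1)
\]
follows immediately.

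The base case $f(\emptyset) = 1$ is immediate, since the zonotope of the empty matrix is a single point. For a loop $e$ we have $A_e = 0$, so $Z_A = Z_{A \setminus e}$ and $M/e = M \setminus e$, giving $f(M) = f(M/e)$ and confirming $y_0 = 1$. For a coloop $e$, the column $A_e$ is linearly independent from the remaining columns, so $Z_A = Z_{A \setminus e} + [0, A_e]$ admits a unique Minkowski decomposition; every lattice point of $qZ_A$ has a unique expression $p' + t A_e$ with $p' \in qZ_{A \setminus e} \cap \ZZ^r$ and $t \in \{0, 1, \ldots, q\}$, yielding $f(M) = (q+1)\,f(M \setminus e)$ and $x_0 = q+1$.

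The main obstacle is the recursion $f(M) = q \cdot f(M/e) + f(M \setminus e)$ when $e$ is neither a loop nor a coloop. The plan is to exploit the bijection between lattice points of $qZ_A$ and $\sigma_q$-compatible discrete orientations of the $q$-fold dilation $qM$ furnished by Propositions~\ref{prop:latticepointprop} and~\ref{prop:discretecompatible}, choosing $\sigma_q$ to be the smallest-element acyclic signature (Example~\ref{edgeorder}) for a total order on $qM$ in which the $q$ copies of $e$ are the smallest elements, with the reference orientation inherited from $A$. By this choice, every circuit $D$ of $qM$ passing through some copy of $e$ has a copy of $e$ as its smallest element, so $\sigma_q(D)$ orients that copy in the positive reference direction. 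For each $\sigma_q$-compatible orientation $\mathcal{O}$, the two-element circuits among copies of a common element $f$ force the positives of $f$ to occupy an initial segment of length $k_f \in \{0, \ldots, q\}$. The key step is then to partition the $\sigma_q$-compatible orientations of $qM$ according to whether $k_e = q$ or $k_e < q$. In the saturated case $k_e = q$, no copy of $e$ is negative, so every constraint coming from a circuit through $e$ becomes vacuous (such a constraint would require a negative copy of $e$); consequently, the restriction $\mathcal{O} \mapsto \mathcal{O}|_{qM \setminus \{\text{copies of } e\}}$ gives a bijection with $\sigma_q$-compatible orientations of $q(M \setminus e)$, contributing $f(M \setminus e)$. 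In the unsaturated case, for each of the $q$ values $k_e \in \{0, \ldots, q-1\}$, restriction gives a bijection with $\sigma_q'$-compatible orientations of $q(M/e)$, where $\sigma_q'$ is the acyclic signature on $M/e$ naturally induced from $\sigma$ (on a circuit of the form $C \setminus \{e\}$ for $C$ a circuit of $M$ through $e$, set $\sigma_q'(C \setminus e) := \sigma(C)|_{C \setminus e}$; acyclicity is inherited from $\sigma$ by lifting any purported non-trivial relation back to $\ker A$ and using that $A_e \neq 0$). The crucial compatibility check is that, since some copy $e_i$ of $e$ is negative when $k_e < q$ and $\sigma_q(D)$ orients $e_i$ positively on any circuit $D = D' \cup \{e_i\}$ of $qM$ through $e_i$, the $\sigma_q$-compatibility condition on $\mathcal{O}$ with respect to $D$ reduces exactly to the $\sigma_q'$-compatibility condition on $\tau := \mathcal{O}|_{qM \setminus \{\text{copies of } e\}}$ with respect to $D'$. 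Summing the two cases yields $f(M) = f(M \setminus e) + q \cdot f(M/e)$, completing the verification.
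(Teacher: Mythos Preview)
Your approach is in the same spirit as the paper's---both establish the deletion--contraction recursion for $E_Z(q)$ via the bijection between lattice points of $qZ_A$ and $\sigma_q$-compatible orientations of $qM$---but you work directly with orientations of $qM$, whereas the paper first passes to an intermediate object (the ``circuit connected $(q-1)$-partial orientations'' of $M$) and performs deletion--contraction on those, using the \emph{largest} element. Your route is arguably more direct, but the verification of the contraction step is incomplete.

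The gap is in the unsaturated case $k_e<q$. You verify that the $\sigma_q$-compatibility constraint coming from a circuit $D=D'\cup\{e_i\}$ of $qM$ through a copy of $e$ reduces to the $\sigma_q'$-constraint on $\tau$ from $D'$, and this is fine. But you say nothing about the constraints coming from circuits $D$ of $qM$ that avoid all copies of $e$. Such a $D$ is a circuit of $q(M\setminus e)$ but need \emph{not} be a circuit of $q(M/e)$: for example, take $M=M(K_4)$ and $e$ any edge; the unique $4$-cycle avoiding $e$ is a circuit of $M$, yet in $M/e$ it properly contains two $2$-element circuits and is therefore not minimal. For such $D$ the condition ``$\tau$ is not compatible with $-\sigma_q(D)$'' has no direct counterpart among the $\sigma_q'$-constraints, and your claimed bijection needs this condition to be \emph{implied} by them. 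That implication is true but not automatic: viewing $-\sigma_q(D)$ as an element of the circuit space of $q(M/e)$, decompose it via Lemma~\ref{lem:circuitdecomposition} as $\sum_j\lambda_jC_j$ with each $C_j$ a signed circuit of $q(M/e)$ whose signs agree with $-\sigma_q(D)$; if every $C_j$ happened to equal $+\sigma_q'(\mathrm{supp}\,C_j)$, then lifting each summand back to $\ker(A_q)$ (through a single fixed copy $e_1$) would yield a nontrivial positive relation $\sigma_q(D)+\sum_j\lambda_j\sigma_q(\widetilde{C_j})=0$, contradicting the acyclicity of $\sigma_q$. Hence some $C_j=-\sigma_q'(\mathrm{supp}\,C_j)$, and $\tau$ compatible with $-\sigma_q(D)$ forces $\tau$ compatible with this $-\sigma_q'$, as needed. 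A related omission is that your definition of $\sigma_q'$ only covers circuits of $M/e$ of the form $C\setminus\{e\}$ with $e\in C$; you must also define $\sigma_q'$ on those circuits of $M/e$ that are already circuits of $M$ avoiding $e$ (naturally, as $\sigma_q$ itself), and this is needed in the lifting step above. With these additions your argument goes through.
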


 Stanley's result extends to general integer zonotopes, but {\it a priori} our proof does not.  For a calculation of the Ehrhart polynomial of an integral zonotope using the language of the arithmetic Tutte polynomial, see \cite{moci2012arithmetic}.
Before giving the proof of Theorem~\ref{Ehrhart}, we need a few definitions.  

By a {\em partial orientation} of a regular matroid $M$,\footnote{In the case of graphs, Hopkins and the first author would call such objects ${\it type\, B \, partial\,  orientations}$, but we will suppress the term ``type B" here.} we mean a function $E \to \{-1, 0, 1\}$, where elements mapping to $0$ are called {\em bi-oriented}.    Given $t \in \mathbb{Z}_{>0}$, a {\em $t$-partial orientation of $M$} will be a partial orientation where each bi-oriented edge receives some integer weight $s$ with  $1 \leq s \leq t$. (By convention,  a $0$-partial orientation of $M$ will mean the same thing as an orientation.)  

Fix a reference orientation $\mathcal{O}_{\rm ref}$ on $M$.
Setting $t+1=q$, there is a map from orientations of $qM$ to $t$-partial orientations as follows.  
Given $e \in M$, if all $q$ copies of $e$ are oriented similarly, we map them to the corresponding orientation of $e$ in $M$.  On the other hand, if $s$ copies of $e$ of are oriented in agreement with $O_{\rm ref}$ and $q-s$ copies are oriented oppositely, with $1 \leq s \leq t$, we map this set of edges to a bi-oriented element of weight $s$ in $M$. 

A non-empty subset $F$ of $E$ is called a {\em potential circuit} in a $t$-partial orientation $\mathcal{O}$ if $F$ is a circuit of $M$ and there is a choice of orientation of each bi-oriented element so that $F$ becomes a positive circuit.  We will call a $t$-partial orientation of $M$ {\em circuit connected} if for each $e$ which is the minimum element in a potential circuit, either $e$ is not bi-oriented and is oriented in agreement with the reference orientation, or $e$ is bi-oriented and replacing it with the opposite orientation of $e$ in $\mathcal{O}_{\rm ref}$ does not produce any potential circuits containing $e$.

\begin{proof}
(of Theorem~\ref{Ehrhart})
For each positive integer $q$,
we will define an acyclic signature $\sigma_q$ on $C(qM)$. By Proposition \ref{prop:ctslatticepointprop} and Proposition \ref{prop:discretecompatible}, it will then suffice to prove that the number of $\sigma_q$-compatible orientations of $qM$ is $q^{{\rm rk}(M)} T_{M}(1 + q^{-1},1)$.  As in Example \ref{edgeorder}, each $\sigma_q$ will come from a total order and reference orientation of $qM$. We now explain how given an arbitrary $\sigma_1$, we can naturally define $\sigma_q$.  Given $e \in M$, let $e_1, \dots e_q$ be the $q$ copies of $e$ in $q M$.  We orient the $e_i$ in $\sigma_q$ similarly to $e$ in $\sigma_1$, i.e., so that together they form a positive cocircuit in their induced matroid.  Let $e^i$ be the list of the elements of $M$ according to $\sigma_1$.  Given $e^i_k$ and $e^j_\ell$ in $qM$, we define $\sigma_q$ so that $e^i_k <_q e^j_\ell$ if $i<j$, or $i=j$ and $k<\ell$.

We are attempting to count objects associated to $qM$ using the Tutte polynomial of $M$, so we would first like to produce a bijective map from $\sigma_q$-compatible orientations of $qM$ to certain objects associated to $M$ alone.  To do this, note that given a $\sigma_q$-compatible orientation $\mathcal{O}$ of $qM$, a reference orientation $\mathcal{O}^q_{\rm ref}$, and a set of parallel elements $e_1, \dots e_q$, there are only $q+1$ possible orientations of these elements: $e_1 \dots e_k$ will be oriented in agreement with $\mathcal{O}^q_{\rm ref}$, for some $k=0,1,\ldots,q$, and $e_{k+1} \dots e_q$ will be oriented oppositely.  (If this were not the case, we would have a 2-element positive circuit whose minimum edge is oriented in disagreement with $\mathcal{O}^q_{\rm ref}$.)
Using this observation, it is not difficult to check that the map defined above from orientations of $qM$ to $t$-partial orientations of $M$ (where $t=q-1$) takes $\sigma_q$-compatible orientations of $qM$ bijectively to circuit connected $t$-partial orientations of $M$.

We first prove that the sets $X_{t,M\setminus e}$ and $X_{t,M/e}$ are the images of $X_{t,M}$ under deletion and contraction, respectively.  Given $\mathcal{O} \in X_{t,M\setminus e}$ (the case of $\mathcal{O} \in X_{t,M/ e}$ being similar), suppose that both orientations of $e$ produce $t$-partial orientations which are not elements of $X_{t,M}$.  This implies that both orientations of $e$ produce potential circuits $C_1$ and $C_2$ which are not $\sigma_q$-compatible.  For $1\leq i \leq 2$, we can choose orientations of the bioriented elements in $C_i$ to produce a circuit $C_i'$ which is not $\sigma_q$-compatible.  The sum $C_1'+C_2'$ is in the kernel of $A$ and does not contain $e$, therefore we can apply Lemma \ref{lem:circuitdecomposition} and decompose $C_1'+C_2'$ into a sum of directed circuits not containing $e$ such that the signs of the elements are inherited from $C_1'+C_2'$.  Let $e'$ be the minimum labeled element in $C_1' \cup C_2'$.  It is possible that $e'$ appears in only one of the circuits $C_1'$ or $C_2'$, otherwise it must be oriented similarly in both $C_1'$ and $C_2'$ as they are not $\sigma_q$-compatible. Thus $e'$ is in the support of $C_1'+C_2'$, and there exists a circuit $C_3$ containing $e'$ whose support is contained in the support of $C_1'+C_2'$.  Moreover, $C_3$ has size larger than 2 as $e'$ was oriented similarly in $C_1'$ and $C_2'$, thus it does not correspond to a bioriented element of $\mathcal{O}$.  By assumption, $e'$ is oriented in disagreement with its reference orientation, therefore $C_3$ is not $\sigma_q$-compatible.  After possibly rebiorienting some of the elements in $C_3$, we obtain a potential circuit in $\mathcal{O}$ which is not $\sigma_q$-compatible. This contradicts the assumption that $\mathcal{O} \in X_{t,M\setminus e}$. 

Let $X_{t,M}$ be the set of circuit connected $t$-partial orientations of $M$ (cf.~Figure~\ref{fig:Fig5}).
Let $e$ be the largest element of $M$.  
If $e$ is a loop, then $|X_{t,M}| = |X_{t,M\setminus e}|$, and if $e$ is a coloop, then $|X_{t,M}| = (t+2)|X_{t,M/e}|$.  
If $e$ is neither a bridge nor a loop, we claim that $|X_{t,M}| = |X_{t,M\setminus e}| + (t+1)|X_{t,M/e}|$.  
Given this claim, we conclude from Proposition \ref{thm:gentutte} that $$|X_{t,M}| =(t+1)^{\rm rk(M)}T_{M}(\frac{t+2}{t+1},1) =  q^{{\rm rk}(M)}T_{M}(\frac{q+1}{q},1)$$
as desired.

Take $\mathcal{O} \in X_{t,M}$ and let $\mathcal{O}_e$ be the set of $t$-partial orientations in $X_{t,M}$ which agree with $\mathcal{O}$ away from $e$.  We first observe that $\mathcal{O}_e$ includes a $t$-partial orientation with $e$ bioriented if and only if it includes $t$-partial orientations with $e$ oriented in each direction.  Furthermore, this is the case if and only if $\mathcal{O}/e \in X_{t,M/ e}$.  We always have that $\mathcal{O} \setminus e \in X_{t,M \setminus e}$ as deleting $e$ cannot cause a $t$-partial orientation to stop being circuit connected. Therefore, $|\mathcal{O}_e| =1 $ if and only if $\mathcal{O}/e \notin X_{t,M/e}$, and $|\mathcal{O}_e| = t+2 $ if and only if $\mathcal{O}/e \in X_{t,M/e}$.  The claim now follows by partitioning  $X_{t,M}$ into maximal sets of $t$-partial orientations which agree on every element in $M\setminus e$.  
\end{proof}

\begin{remark}
The realizable part of the Bohne-Dress theorem states that the regular tilings of $Z_A$ by paralleletopes are dual to the generic perturbations of the central hyperplane arrangement defined by $A$. Hopkins and Perkinson \cite{hopkins2016bigraphical} investigated generic bigraphical arrangements, i.e. generic perturbations of twice the graphical arrangement,  and associated certain partial orientations, which they called {\it admissible}, to the regions in the complement of such an arrangement.  The aforementioned duality induces a geometric bijection between these regions and the lattice points in the twice-dilated graphical zonotope.  This in turn gives a bijection between the admissible partial orientations and the circuit connected partial orientations.  The enumeration of these two different classes of partial orientations both appear as specializations of the 
12-variable expansion of the Tutte polynomial from \cite{backman2015fourientation}, and the aforementioned duality interchanges a pair of symmetric variables.
\end{remark}

\subsection{Ehrhart reciprocity}\label{Ehrhartrec}

 Ehrhart reciprocity states that if $P$ is an integral polytope, and $E_P(q)$ is its Ehrhart polynomial, then the number of interior points of the $q$-th dilate of $P$ is $|E_P(-q)|$. Combining Ehrhart reciprocity and Stanley's result, one obtains the following corollary:

\begin{corollary} \label{cor:reciprocity}
The number of interior lattice points in $qZ_A$ is $$q^{{\rm rk}(M)}T_{M}(1-1/q,1).$$
\end{corollary}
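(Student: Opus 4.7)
My plan is to derive the statement directly from Theorem~\ref{Ehrhart} by invoking Ehrhart reciprocity. Recall Ehrhart reciprocity: for a $d$-dimensional lattice polytope $P \subset \RR^d$ with Ehrhart polynomial $E_P(q)$, the number of lattice points in the relative interior of $qP$ equals $(-1)^d E_P(-q)$. Since $A$ is totally unimodular, the columns of $A$ lie in $\ZZ^r$, so the zonotope $Z_A = \sum_{e\in E}[0,v_e]$ is an integral polytope; moreover $Z_A$ is full-dimensional in $\RR^r$ because $A$ has rank $r = {\rm rk}(M)$. Hence the hypotheses of Ehrhart reciprocity are met.

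The actual computation is then a one-line substitution. Using Theorem~\ref{Ehrhart},
\[
E_{Z_A}(-q) \;=\; (-q)^r\, T_M\!\left(1 + \frac{1}{-q},\,1\right) \;=\; (-1)^r q^r\, T_M\!\left(1 - \frac{1}{q},\,1\right),
\]
so multiplying both sides by $(-1)^r$ yields the count $q^r\, T_M(1-1/q,\,1)$ of interior lattice points in $qZ_A$. There is no real obstacle here; the argument is a purely formal consequence of the Ehrhart polynomial identity already established in Theorem~\ref{Ehrhart}, and the only things to verify are full-dimensionality and integrality of $Z_A$, both of which are immediate from total unimodularity of $A$.

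For readers who prefer a bijective argument (as promised in the introduction), an alternative plan would be to re-run the proof of Theorem~\ref{Ehrhart} with an added condition. The boundary facets of $Z_A$ are indexed by signed cocircuits of $M$, so interior lattice points of $qZ_A$ should correspond, under the bijection of Section~\ref{discretestuffforregularmatroids} applied to $qM$, precisely to $\sigma_q$-compatible discrete orientations of $qM$ that contain no positive cocircuit (i.e., are totally cyclic). Translating this condition to $t$-partial orientations of $M$ (with $t = q-1$) and imposing the "no positive cocircuit" constraint on the deletion-contraction argument used to prove Theorem~\ref{Ehrhart} should yield the evaluation $q^r\, T_M((q-1)/q,\,1)$ directly, via the universality result Proposition~\ref{thm:gentutte}. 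The nontrivial point in this alternative route would be verifying that the "totally cyclic" (i.e., no positive cocircuit) constraint interacts cleanly with deletion and contraction of the largest element of $M$; once this is established the counting follows as before.
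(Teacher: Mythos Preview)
Your proposal is correct and follows essentially the same route as the paper: the corollary is obtained by combining Ehrhart reciprocity with Theorem~\ref{Ehrhart}, exactly as you do, and your alternative bijective sketch (interior lattice points $\leftrightarrow$ $\sigma_q$-compatible orientations with no positive cocircuit, then deletion--contraction) matches the direct verification outlined in Remark~\ref{rmk:reciprocity}.
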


\begin{remark} \label{rmk:reciprocity}
 Our proof of Stanley's formula also allows for a direct verification of Corollary~\ref{cor:reciprocity} in this setting without appealing to Ehrhart reciprocity.  Each facet of $qZ$ is determined by a positive cocircuit in $qM$.  Thus a point lies in the interior of $qZ$ if and only if the corresponding $\sigma_q$-compatible orientation of $qM$ contains no positive cocircuits, or equivalently, if every element in the corresponding circuit connected $(q-1)$-partial orientation of $M$ is contained in a potential circuit.  One can verify that these objects are enumerated by the corresponding Tutte polynomial specialization via deletion-contraction as illustrated above, although the argument is slightly more involved as one needs to take care to show that potential circuits and cocircuits can be treated separately.

For the case of graphs, various generalizations of the arguments used in the proof of Theorem~\ref{Ehrhart} are given in \cite{backman2015fourientation}. 
\end{remark}

\subsection{Other invariants of unimodular zonotopes}

The following theorem collects some known connections between evaluations of the Tutte polynomial and geometric quantities associated to unimodular zonotopes.

\begin{theorem}\label{thm:Tutteevals}
Let $Z$ be a unimodular zonotope.  Then:

\begin{itemize}
    \item $T_{M}(2,1)$ is the number of lattice points in $Z$.
    
    \item $T_{M}(0,1)$ is the number of interior lattice points in $Z$.
    
    \item $T_{M}(1,1)$ is the lattice volume of $Z$.
    
    \item $T_{M}(2,0)$ is the number of vertices of $Z$.
    
\end{itemize}

\end{theorem}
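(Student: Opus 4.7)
The plan is to derive all four identities by specializing results already available in this section, with only the vertex count requiring genuinely new input. Stanley's formula $E_Z(q)=q^{r}T_M(1+1/q,1)$ (Theorem~\ref{Ehrhart}) evaluated at $q=1$ gives $|Z\cap\mathbb{Z}^r|=T_M(2,1)$, and Corollary~\ref{cor:reciprocity} evaluated at $q=1$ gives that the number of interior lattice points of $Z$ equals $T_M(0,1)$. For the volume, I would invoke the zonotopal subdivision $\Sigma$ from Theorem~\ref{thm:zonotopedecomp}: since $A$ is totally unimodular, each cell $Z(B)$ is congruent by translation to the unimodular parallelepiped $\sum_{e\in B}[0,A_e]$ and hence has unit volume, so $\mathrm{vol}(Z)=|B(M)|=T_M(1,1)$, the last equality being a standard Tutte polynomial evaluation. (This can alternatively be read off as the leading coefficient $\lim_{q\to\infty}q^{-r}E_Z(q)$ from Theorem~\ref{Ehrhart}.)

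The vertex count is where the main work lies. I would first show that the vertices of $Z_A$ are in bijection with the acyclic orientations of $M$, i.e., the orientations $\mathcal{O}$ with no positive circuit. Every vertex of $Z_A$ has the form $v_S=\sum_{e\in S}A_e$ for some $S\subseteq E$; writing $\mathcal{O}_S$ for the orientation sending $e$ to $+1$ iff $e\in S$, the point $v_S$ is a vertex of $Z_A$ if and only if some linear functional on $\mathbb{R}^r$ is strictly maximized at $v_S$, equivalently iff there exists $y\in\mathbb{R}^r$ with $\mathrm{sign}(\langle y,A_e\rangle)=\mathcal{O}_S(e)$ for every $e\in E$. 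By the Farkas alternative (exactly as invoked in Lemma~\ref{lem:gordan}), this is equivalent to $\mathcal{O}_S$ having no positive circuit. Injectivity of $S\mapsto v_S$ on acyclic orientations then follows from Lemma~\ref{lem:regularcircuitdecomposition}: if $v_S=v_{S'}$ with $S\neq S'$, then $\chi_S-\chi_{S'}\in\ker(A)$ decomposes into positive signed circuits compatible with $\mathcal{O}_S$, contradicting acyclicity.

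With this bijection in hand, the claim reduces to the classical identity (due to Las Vergnas, extending Stanley's theorem for graphs) that the number $a(M)$ of acyclic orientations of $M$ equals $T_M(2,0)$. The main obstacle is establishing this identity in the matroidal setting; I would prove it by applying Proposition~\ref{thm:gentutte} with $(a,b,x_0,y_0)=(1,1,2,0)$. When $e$ is neither a loop nor a coloop, the recursion $a(M)=a(M\setminus e)+a(M/e)$ follows from the observation that each acyclic orientation of $M\setminus e$ extends to an acyclic orientation of $M$ in exactly one way, except when both orientations of $e$ produce an acyclic orientation of $M$, in which case it extends in two ways and the contracted orientation is acyclic on $M/e$; a loop forces every orientation to contain a positive circuit so $a(M)=0$ (matching $y_0=0$), and a coloop is in no circuit so both its orientations preserve acyclicity (matching $x_0=2$). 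Invoking Proposition~\ref{thm:gentutte} then yields $a(M)=T_M(2,0)$.
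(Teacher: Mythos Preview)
Your proof is correct. For the first two items you specialize Theorem~\ref{Ehrhart} and Corollary~\ref{cor:reciprocity} at $q=1$, exactly as the paper does (the paper phrases the second as ``evaluating the Ehrhart polynomial at $q=-1$'', which amounts to the same thing via reciprocity). For the volume, your tiling argument is the one the paper records in the remark immediately following the theorem, while the paper's in-proof argument is your parenthetical alternative, the Ehrhart limit $\mathrm{vol}(Z)=\lim_{q\to\infty}q^{-r}E_Z(q)=T_M(1,1)$.

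The genuine difference is in the vertex count. The paper dispatches this in one sentence by citing two classical facts: the normal fan of $Z_A$ is the central hyperplane arrangement determined by the columns of $A$, and Zaslavsky's theorem gives $T_M(2,0)$ for the number of regions. You instead work entirely inside the paper's toolkit: you use a Gordan/Farkas argument (in the spirit of Lemma~\ref{lem:gordan}) together with Lemma~\ref{lem:regularcircuitdecomposition} to set up the bijection between vertices of $Z_A$ and acyclic orientations of $M$, and then verify $a(M)=T_M(2,0)$ by a deletion--contraction argument via Proposition~\ref{thm:gentutte}. This is longer but self-contained, and is really a direct proof of the Las~Vergnas/Zaslavsky count in the realizable case---so the two routes are closer than they look, with yours unpacking what the paper cites.
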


\begin{proof}
The first two formulas follow from evaluating the Ehrhart polynomial at $q=1$ and $q=-1$.  The third follows from interpreting the lattice volume of $Z$ as $${\rm Vol}(Z) = \lim_{q \rightarrow \infty} \frac{|\mathbb{Z}^n \cap qZ|}{q^{{\rm rk}(
M)}} = \lim_{q \rightarrow \infty} T_{M}(1+1/q,1) = T_{M}(1,1).$$

The fourth enumeration follows from the classical observation that the normal fan of the zonotope is the central hyperplane arrangement defined by $A$ and then applying Zaslavsky's theorem which says that the number of such regions is $T_{M}(2,0)$.
\end{proof}

\begin{remark}
Recall that $T_M(1,1)$ is equal to the number of bases of $M$, which is equal to $|{\rm Jac}(M)|$. One can show that each maximal cell in our polyhedral decomposition of $Z_A$ has volume 1, which gives an alternate proof of the third evaluation in Theorem~\ref{thm:Tutteevals}.   
Taking the limit of $qZ_{A}$ as $q$ goes to infinity while scaling the lattice by $\frac{1}{q}$, the set $X_{q-1,M}$ approaches the set of $\sigma$-compatible continuous orientations of $M$ and we recover the subdivision from Proposition \ref{prop:zonotopedecomp} (see Figure~\ref{fig:Fig5}).
\end{remark}

\begin{figure}[ht!] \label{fig:Fig5}
\begin{center}
    \includegraphics[width=7cm, height = 7cm, keepaspectratio]{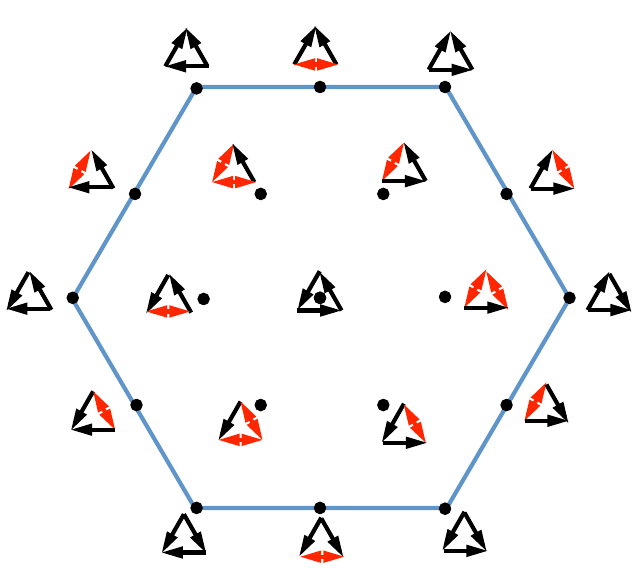}
\end{center}
  \caption{The set $X_{1,K_3}$ associated to the lattice points of $2Z_{K_3}$ using the acyclic signature $\sigma$ from Figure \ref{zonotopeK3_3}.  The bioriented edges are colored red. Taking the (suitably rescaled) limit of $qZ_{K_3}$ as $q$ goes to infinity,  the set $X_{q-1,K_3}$  induces the subdivision depicted in Figure \ref{zonotopeK3_6}.}
\end{figure}

\section*{Acknowledgements}

The first author was partially supported by DFG-Collaborative Research Center, TRR 109 “Discretization in Geometry and Dynamics”, and a Zuckerman STEM Postdoctoral Scholarship; he thanks Sam Hopkins for introducing him to Theorem \ref{thm:Tutteevals}, and Raman Sanyal for explaining that regular tilings of a zonotope can alternately be viewed as dual to generic perturbations of the associated hyperplane arrangement. 
The second author's work was partially supported by the NSF research grant DMS-1529573.
The third author was partially supported by NWO Vici grant 639.033.514; he thanks Yin Tat Lee for the discussion on linear programming.
All three authors thank the anonymous referee for the helpful feedback.

\bibliographystyle{plain}

\bibliography{RM_Geom_Bij}

\medskip

Einstein Institute of Mathematics

The Hebrew University of Jerusalem

Givat Ram. Jerusalem, 9190401, Israel

Email address: \url{spencer.backman@mail.huji.ac.il}\\

School of Mathematics, Georgia Institute of Technology

Atlanta, Georgia 30332-0160, USA

Email address: \url{mbaker@math.gatech.edu}\\

Mathematical Institute, University of Bern

3012 Bern, Switzerland; and

School of Mathematics, Georgia Institute of Technology

Email address: \url{chi.yuen@math.unibe.ch}

\end{document}